\theoremstyle{plain}
\newtheorem{thm}{Theorem}[section]
\newtheorem{lem}[thm]{Lemma}
\newtheorem{defn}[thm]{Definition}
\newtheorem{assm}[thm]{Assumption}
\theoremstyle{remark}
\newtheorem{rem}[thm]{Remark}
\tikzset{
  mynode/.style={fill,circle,inner sep=2pt,outer sep=0pt}
}
\numberwithin{equation}{section}
\renewcommand{\Im}{{\rm{Im}}}
\renewcommand{\Re}{{\rm{Re}}}
\date{}
\begin{document}
\title{A necessary and sufficient condition for edge universality at the largest singular values of covariance matrices}

\author[1]{Xiucai Ding \thanks{E-mail: xiucai.ding@mail.utoronto.ca.}}
\author[2]{Fan Yang  \thanks{E-mail: fyang75@math.wisc.edu.}}
\affil[1]{Department of Statistical Sciences, University of Toronto}
\affil[2]{Department of Mathematics, University of Wisconsin-Madison}

\maketitle
\begin{abstract}
In this paper, we prove a necessary and sufficient condition for the edge universality of sample covariance matrices with general population. We consider sample covariance matrices of the form $\mathcal Q = TX(TX)^{*}$, where the sample $X$ is an $M_2\times N$ random matrix with $i.i.d.$ entries with mean zero and variance $N^{-1}$, and $T$ is an $M_1 \times M_2$ deterministic matrix satisfying $T^* T$ is diagonal. We study the asymptotic behavior of the largest eigenvalues of  $\mathcal Q$  when $M:=\min\{M_1,M_2\}$ and $N$ tends to infinity with $\lim_{N \to \infty} {N}/{M}=d \in (0, \infty)$. Under mild assumptions of $T$, we prove that the Tracy-Widom law holds for the largest eigenvalue of $\mathcal Q$ if and only if $\lim_{s \rightarrow \infty}s^4 \mathbb{P}(\vert \sqrt{N} x_{ij} \vert \geq s)=0$. This condition was first proposed for Wigner matrices by Lee and Yin \cite{LY}.
\end{abstract}

\section{Introduction}

Sample covariance matrices are fundamental objects in modern multivariate statistics. In the classical setting \cite{AT}, for an $M \times N $ sample matrix $X$, people focus on the asymptotic properties of $X X^{*}$ when $M$ is fixed and $N$ goes to infinity. In this case the central limit theorem and law of large number can be applied to the statistical inference procedure. However, the advance of technology has led to high dimensional data such that $M$ is comparable to or even larger than $N$ \cite{IJ,IJ2}. This high dimensionality can not be handled with the classical multivariate statistical theory.

An important topic in the statistical study of sample covariance matrices is the distributions of the largest eigenvalues, which have been playing essential roles in analyzing the data matrices. For example, they are of great interest to the principal component analysis (PCA) \cite{JI}, which is a standard technique for dimensionality reduction and provides a way to identify patterns from real data. 
Also, the largest eigenvalues are commonly used in hypothesis testing, such as the well-known Roy's largest root test \cite{NJ}. 
For a detailed review, one can refer to \cite{IJ,PA,YZB}.

In this paper, we study the largest eigenvalues of sample covariance matrices with comparable dimensions and general population (i.e. the expectation of the sample covariance matrices are non-scalar matrices). More specifically, we consider sample covariance matrices of the form $\mathcal Q = TX(TX)^{*}$, where the sample $X=(x_{ij})$ is an $M_2 \times N$ random matrix with $i.i.d.$ entries such that $\mathbb E x_{11}=0$ and $\mathbb E |x_{11}|^2 = N^{-1}$, and $T$ is an $M_1 \times M_2$ deterministic matrix. On dimensionality, we assume that $N/M \to d$ as $N\to \infty$, where $M:=\min\{M_1, M_2\}$. In the last decade, random matrix theory has been proved to be one of the most powerful tools in dealing with this kind of large dimensional random matrices. It is well-known that the empirical spectral distribution (ESD) of $\mathcal Q$ converges to the (deformed) Marchenko-Pastur (MP) law \cite{MP}, whose rightmost edge $\lambda_r$ gives the asymptotic location of the largest eigenvalue. Furthermore, it was proved in a series of papers that under a proper $N^{2/3}$ scaling, the distribution of the largest eigenvalue $\lambda_1$ of $\mathcal Q$ around $\lambda_r$ converges to the Tracy-Widom distribution \cite{TW1,TW}, which arises as the limiting distribution of the largest rescaled eigenvalues of the Gaussian orthogonal ensemble (GOE). This result is commonly referred to as the {\it{edge universality}}, in the sense that it is independent of the detailed distribution of the entries of $X$. The Tracy-Widom distribution of $(\lambda_1 - \lambda_r)$ was first proved for $\mathcal Q$ with $X$ consisting of $i.i.d.$ centered real or complex Gaussian random entries (i.e. $X$ is a Wishart matrix) and trivial population (i.e. $T=I$) \cite{IJ2}. The edge universality in the $T=I$ case were later proved for all random matrices $X$ whose entries satisfy arbitrary sub-expoenetial distribution \cite{PY2,PY}. When $T$ is a (non-scalar) diagonal matrix, the Tracy-Widom distribution was proved for Wishart matrix $X$ first in \cite{Karoui} (non-singular $T$ case) and \cite{Onatski} (singular $T$ case). Later the edge universality for diagonal $T$ was proved in \cite{BPZ1,LS} for all random matrices $X$ with sub-expoenetial distributed entries. The most general case with rectangular and non-diagonal $T$ is considered in \cite{KY2}, where the edge universality was proved for $X$ with sub-expoenetial distributed entries.

In this paper, 
we prove a necessary and sufficient condition for the edge universality of sample covariance matrices with general population. Briefly speaking, we will prove the following result. 

\vspace{5pt}
{\it{If $T^* T$ is diagonal and satisfies some mild assumptions, then $N^{\frac{2}{3}}\left(\lambda_1(\mathcal Q)-\lambda_r\right)$ converges weakly to the Tracy-Widom distribution if and only if the entries of $X$ satisfy the following tail condition:}}
\begin{equation}
\lim_{s \rightarrow \infty } s^4 \mathbb{P}\left( \sqrt{N}\vert x_{11} \vert \geq s\right)=0 . \label{INTRODUCTIONEQ1}
\end{equation}

For a precise statement of the result, one can refer to the Theorem \ref{main_thm}. Note that under the assumption $T^* T$ is diagonal, the matrix $\mathcal Q$ is equivalent (in terms of eigenvalues) to a sample covariance matrix with diagonal $T$. Hence our result is basically an improvement of the ones in \cite{BPZ1,LS}. The condition (\ref{INTRODUCTIONEQ1}) provides a simple criterion for the edge universality of sample covariance matrices without assuming any other properties of matrix entries. 

Note that the condition (\ref{INTRODUCTIONEQ1}) is slightly weaker than the finite fourth moment (of $\sqrt{N}x_{11}$) condition. In the null case with $T=I$, it was proved before in \cite{YBK} that $\lambda_1 \rightarrow \lambda_r$ almost surely if the fourth moment exists. Later the finite fourth moment condition is proved to be also necessary for the almost sure convergence of $\lambda_1$ in \cite{BSY}. Our theorem, however, shows that the existence of finite fourth moment is not necessary for the Tracy-Widom fluctuation. In fact, one can easily construct random variables that satisfies condition (\ref{INTRODUCTIONEQ1}) but has infinite fourth moment. For example, we can use the following probability density function with $x^{-5}(\log x)^{-1}$ tail:
\begin{equation*}
\rho(x)=\frac{e^4(4 \log x +1)}{x^5(\log x)^2} \mathbf{1}_{\{x>e\}}.
\end{equation*}
Then in this case $\lambda_1$ does not converge to $\lambda_r$ almost surely, but $N^{{2}/{3}}(\lambda_1-\lambda_r)$ still converges weakly to the Tracy-Widom distribution. On the other hand, Silverstein derived that $\lambda_1 \rightarrow \lambda_r$ in probability from the condition (\ref{INTRODUCTIONEQ1}) \cite{SW}. So our result can be also regarded as an improvement of the one in \cite{SW}.


The necessary and sufficient condition for the edge universality of Wigner matrix ensembles has been proved by Lee and Yin in \cite{LY}. The main idea of our proof is similar to theirs. For the necessary part, the key observation is that if the condition (\ref{INTRODUCTIONEQ}) does not hold, then $X$ has a large entry with nonzero probability. As a result, the largest eigenvalue of $\mathcal Q$ can be larger than $C$ with nonzero probability for any fixed constant $C>\lambda_r$, i.e. $\lambda_1 \not\to \lambda_r$ in probability. The sufficient part is more delicate. A key observation of \cite{LY} is that if we introduce a ``cutoff" on the matrix elements of $X$ at the level $N^{-\epsilon}$, then the matrix with cutoff can well approximate the original matrix in terms of the largest singular value if and only if the condition (\ref{INTRODUCTIONEQ}) holds. Thus the problem is reduced to proving the edge universality of sample covariance matrices with size $\le N^{-\epsilon}$. In \cite{BPZ1,LS}, the edge universality for sample covariance matrices have been proved by assuming a subexponential decay of the $x_{ij}$ entries. We first extend their edge universality results to sample covariance matrices with entries having size $\le N^{-\phi}$ for some $1/3<\phi \le 1/2$; see Lemma \ref{lem_small} and Lemma \ref{lem_smallcomp}. Then a major part of this paper is devoted to extending the ``small" support $N^{-\phi}$, $1/3<\phi \le 1/2$, case to the ``large" support $N^{-\epsilon}$ case. This goal can be accomplished with a Green function comparison method, which has been applied successfully in proving the universality of covariance matrices \cite{PY2,PY}. A technical difficulty is that the change of $\mathcal Q$ is nonlinear in terms of the change of the matrix $X$. To handle this, we use the self-adjoint linearization trick; see Definition \ref{linearize_block}.

This paper is organized as follows. In Section \ref{main_result}, we define the deformed Marchenko-Pastur law and its rightmost edge (i.e. the soft edge) $\lambda_r$, and then give the main theorem of this paper. In Section \ref{sec_maintools}, we introduce the notations and collect some tools that will be used to prove the main theorem. In Section \ref{sec_cutoff}, we prove the main result. In Sections \ref{tools} and \ref{comparison}, we prove some key lemmas and theorems that are used in the proof of main result. In particular, the Green function comparison is performed in Section \ref{comparison}. In Appendix \ref{appendix1}, we prove the local law and edge universality of sample covariance matrices with small support $N^{-\phi}$ with $1/3<\phi \le 1/2$.

\begin{rem}
In this paper, we do not consider the edge universality at the leftmost edge (i.e. the hard edge) for the smallest eigenvalues. It will be studied elsewhere. Let $\lambda_l$ be the leftmost edge of the deformed Marchenko-Pastur law. It is worth mentioning that the condition (\ref{INTRODUCTIONEQ1}) can be shown to be sufficient for the edge universality at the hard edge if $\lambda_r \not\to 0$ as $N\to \infty$. However, it seems that (\ref{INTRODUCTIONEQ1}) is not necessary. So far, there is no conjecture about the necessary and sufficient condition for the edge universality at the  hard edge.
\end{rem}

\vspace{2pt}

\noindent{\bf Conventions.} All quantities that are not explicitly constant may depend on $N$, and we usually omit $N$ from our notations. We use $C$ to denote a generic large positive constant, whose value may change from one line to the next. Similarly, we use $\epsilon$, $\tau$ and $c$ to denote generic small positive constants. For two quantities $a_N$ and $b_N$ depending on $N$, the notation $a_N = O(b_N)$ means that $|a_N| \le C|b_N|$ for some positive constant $C>0$, and $A_N=o(B_N)$ means that $|a_N| \le c_N |b_N|$ for some positive constants $c_N\to 0$ as $N\to \infty$. We also use the notation $a_N \sim b_N$ if $a_N = O(b_N)$ and $b_N = O(a_N)$. For a matrix $A$, we use $\|A\|$ to denote its operator norm and $\|A\|_{HS}$ the Hilbert-Schmidt norm; for a vector $\mathbf v=(v_i)_{i=1}^n$, $\|\mathbf v\|\equiv \|\mathbf v\|_2$ stands for the Euclidean norm, while $|\mathbf v| \equiv \|\mathbf v\|_1$ stands for the $l^1$-norm. In this paper, we usually write an $n\times n$ identity matrix $I_{n\times n}$ as $1$ or $I$ when there is no confusion about the dimension. If two random variables $X$ and $Y$ have the same distribution, we write $X\stackrel{d}{=} Y$.

\section{Definitions and Main Result}\label{main_result}

\subsection{Sample covariance matrices with general populations}

We consider the $M_1 \times M_1$ sample covariance matrix $\mathcal Q_1:=TX(TX)^*$, where $T$ is a deterministic $M_1\times M_2$ matrix and $X$ is a random $M_2\times N$ matrix. We assume $X=(x_{ij})$ have independent entries $x_{ij}= N^{-1/2}q_{ij}$, $1 \leq i \leq M_2$ and $1 \leq j \leq N$, where $q_{ij}$ are {\it{i.i.d.}} random variables satisfying
\begin{equation}\label{assm1}
\mathbb{E} q_{11} =0, \ \quad \ \mathbb{E} \vert q_{11} \vert^2  =1.
\end{equation}
In this paper, we regard $N$ as the fundamental parameter and $M_{1,2}\equiv M_{1,2}(N)$ as depending on $N$. We define $M:=\min\{M_1,M_2\}$ and the aspect ratio $d_N:= N/M$. Moreover, we assume that
\begin{equation}
 d_N \to d \in (0,\infty), \ \ \text{ as } N\to \infty. \label{assm2}
 \end{equation}
For simplicity, we assume that $N/M$ is constant and hence use $d$ instead of $d_N$. We denote the eigenvalues of $\mathcal Q_1$ in decreasing order as $\lambda_1(\mathcal Q_1)\geq \ldots \geq \lambda_{M_1}(\mathcal Q_1)$. We will also use the $N \times N$ matrix $\mathcal Q_2:=(TX)^* TX$ and its eigenvalues $\lambda_1(\mathcal Q_2) \geq \ldots \geq \lambda_N(\mathcal Q_2)$. Since $\mathcal Q_1$ and $\mathcal Q_2$ share the same nonzero eigenvalues, we will for simplicity write $\lambda_j$, $1\le j \le \min\{N,M_1\}$, to denote the $j$-th eigenvalue of both $\mathcal Q_1$ and $\mathcal Q_2$ without causing any confusion. 

We assume that $T^* T$ is diagonal. In other words, $T$ has a singular decomposition $T=U\bar D$, where $U$ is an $M_1 \times M_1$ unitary matrix and $\bar D=(D,0)$ is an $M_1 \times M_2$ matrix with diagonal blocks. Then it is equivalent to study the eigenvalues of $\bar DX(\bar DX)^*$. When $M_1 \le M_2$ (i.e. $M=M_1$), we can write $\bar D=(D,0)$ where $D$ is an $M\times M$ diagonal matrix such that $D_{11} \ge \ldots \ge D_{MM}.$ Hence we have $\bar D X = D\tilde X$, where $\tilde X$ is the upper $M\times N$ block of $X$ with {\it{i.i.d.}} entries $x_{ij}$, $1 \leq i \leq M$ and $1 \leq j \leq N$. On the other hand, when $M_1 \ge M_2$ (i.e. $M=M_2$), we can write $\bar D=\begin{pmatrix}D\\ 0\end{pmatrix}$ with $D$ being an $M\times M$ diagonal matrix as above. Then $\bar D X = \begin{pmatrix}D X \\ 0\end{pmatrix}$, which shares the same nonzero singular values with $DX$. The above discussions show that we can make the following stronger assumption on $T$:
\begin{equation}\label{simple_assumption}
M_1 = M_2 = M, \ \ \text{ and } \ \ T \equiv D = \text{diag}\left(\sigma_1^{1/2},\sigma_2^{1/2},\ldots,\sigma_N^{1/2}\right) \text{ with } \sigma_1 \ge \sigma_2 \ge \ldots \ge \sigma_M \ge 0.
\end{equation}
Under the above assumption, the population covariance matrix of $\mathcal Q_1$ is defined as
\begin{equation}\label{def_Sigma}
\Sigma : = \mathbb E \mathcal Q_1 = D^2 = \text{diag}\left(\sigma_1 ,\sigma_2 ,\ldots,\sigma_M \right). 
\end{equation}

We denote the empirical spectral density of $\Sigma$ by
\begin{equation}\label{sigma_ESD}
\pi_N := \frac{1}{M} \sum_{i=1}^M \delta_{\sigma_i}.
\end{equation}
Suppose there exists a small positive constant $\tau$ such that 
\begin{equation}\label{assm3}
\sigma_1 \le \tau^{-1} \ \ \text{ and } \ \ \pi_N([0,\tau]) \le 1 - \tau \ \ \text{for all } N.
\end{equation}
Note the first condition means the operator norm of $\Sigma$ is bounded by $\tau^{-1}$, and the second condition means that the spectrum of $\Sigma$ cannot concentrate at zero.

For definiteness, in this paper we focus on the real symmetric case, i.e. the random variable $q_{11}$ is real. However, we remark that our proof can be applied to the complex case after minor modifications if we assume in addition that $\Re\, q_{11}$ and $\Im\, q_{11}$ are independent centered random variables with variance  $1/2$. 

We summarize our basic assumptions here for future reference.
\begin{assm}\label{assm_big1}
We assume $X$ is an $M\times N$ random matrix with real i.i.d. entries satisfying (\ref{assm1}) and (\ref{assm2}). We assume $T$ is a deterministic $M\times M$ diagonal matrix satisfying (\ref{simple_assumption}) and (\ref{assm3}).  
\end{assm}

Finally, we define the following tail condition for the entries of $X$,
\begin{equation}
\lim_{s \rightarrow \infty } s^4 \mathbb{P}(\vert q_{11} \vert \geq s)=0 . \label{INTRODUCTIONEQ}
\end{equation}

\subsection{Deformed Marchenko-Pastur law}

In this paper, we will study the eigenvalue statistics of $\mathcal Q_{1,2}$ through their {\it{Green functions}} or {\it{resolvents}}.
\begin{defn}[Green functions]
We define the Green functions for $\mathcal Q_{1,2}$ as
\begin{equation}\label{def_green}
\mathcal G_1(z):=\left(DXX^*D^* -z\right)^{-1} , \ \ \ \mathcal G_2 (z):=\left(X^* D^* D X-z\right)^{-1} , \ \ \ z = E+ i\eta \in \mathbb C_+,
\end{equation}
where $\mathbb C_+$ is the upper half complex plane. We denote the empirical spectral densities (ESD) of $\mathcal Q_{1,2}$ as
$$\rho_{1}^{(N)} := \frac{1}{M} \sum_{i=1}^M \delta_{\lambda_i(\mathcal Q_1)}, \ \ \rho_{2}^{(N)} := \frac{1}{N} \sum_{i=1}^N \delta_{\lambda_i(\mathcal Q_2)}.$$
Then the Stieltjes transforms of $\rho_{1,2}$ are given by
\begin{equation}
m_1^{(N)}(z):=\int \frac{1}{x-z}\rho_{1}^{M}(dx)=\frac{1}{M} \mathrm{Tr} \, \mathcal G_1(z),\ \ m_2^{(N)}(z):=\int \frac{1}{x-z}\rho_{2}^{M}(dx)=\frac{1}{N} \mathrm{Tr} \, \mathcal G_2(z). \label{ST_m12}
\end{equation}
Throughout the following, we omit the super-index $N$ from our notations. 
\end{defn}
\begin{rem}
Since the nonzero eigenvalues of $\mathcal Q_1$ and $\mathcal Q_2$ are identical, and $\mathcal Q_1$ has $M-N$ more (or $N-M$ less) zero eigenvalues, we have
\begin{equation}
\rho_1=\rho_2 d + (1-{d})\delta_{0}, \label{def21}
\end{equation}
and
\begin{equation}\label{barm}
m_1 (z)= - \frac{1-d}{z}+d m_2 (z).
\end{equation}
\end{rem}



In the case $D= I_{M\times M}$, it is well known that the ESD of $X^* X$, $\rho_2$, converges weakly to the Marchenko-Pastur (MP) law \cite{MP} as $N\to \infty$:
\begin{equation}\label{rho_0}
\rho_{MP}(x) dx:=\frac{1}{2\pi}\frac{\sqrt{\left[(\lambda_{+}-x)(x-\lambda_{-})\right]_+}}{x} dx,
\end{equation}
where $\lambda_{\pm}=(1\pm d^{-{1}/{2}})^2$. As a result, $m_2(z)$ converges to the Stieltjes transform $m_{MP}(z)$ of $\rho_{MP}(z)$, which can be computed explicitly as
\begin{equation}
m_{MP}(z) = \frac{ d^{-1} - 1 -z + i\sqrt{(\lambda_{+}-z)(z-\lambda_{-})}}{2z}, \ \ z\in \mathbb C_+. \label{GENERMP}
\end{equation}
Moreover, one can verify that $m_{MP}(z)$ satisfies the self-consistent equation \cite{BPZ1,PY,JS}
\begin{equation}\label{MOSELF}
\frac{1}{m_{MP}(z)}= -z + d^{-1} \frac{1}{1+ m_{MP}(z)}, \ \ \Im \, m_{MP}(z) \ge 0 \text{ for } z\in \mathbb C_+.
\end{equation}
Using (\ref{def21}) and (\ref{barm}), it is also easy to get the expressions for $\rho_{1c}$ and $m_{1c}$, where $\rho_{1c}$ is the asymptotic eigenvalue density for $\mathcal{Q}_1$ and $m_{1c}$ is the associated Stieltjes transform.

If $D$ is non-identity but the ESD $\pi_N$ in (\ref{sigma_ESD}) converges weakly to some $\hat \pi$, then it was shown in \cite{MP} that the empirical eigenvalue distributions of $\mathcal Q_{1,2}$ still converge in probability to some deterministic distributions $\hat \rho_{1,2c}$, referred to as the {\it{deformed Marchenko-Pastur law}} below. They can be described through the Stieltjes transform
$$\hat m_{1,2c}(z):=\int_{\mathbb R} \frac{\hat \rho_{1,2c}(dx)}{x-z}, \ \ z = E+ i\eta \in \mathbb C_+.$$
For any given probability measure $\hat \pi$ compactly supported on $\mathbb R_+$, we define $\hat m_{2c}$ as the unique solution to the self-consistent equation \cite{BPZ1,KY2,LS}
\begin{equation}\label{deformed_MP}
\frac{1}{\hat m_{2c}(z)} = - z + d^{-1}\int\frac{x}{1+\hat m_{2c}(z) x} \hat \pi(dx), \ \ \Im \, \hat m_{2c}(z) \ge 0  \text{ for } z\in \mathbb C_+.
\end{equation}
It is well known that the functional equation (\ref{deformed_MP}) has a unique solution that is uniformly bounded on $\mathbb C_+$ under the assumptions (\ref{assm2}) and (\ref{assm3}) \cite{MP}. Letting $\eta \searrow 0$, we can recover the asymptotic eigenvalue density $\hat \rho_{2c}$ with the inverse formula
\begin{equation}\label{ST_inverse}
\hat \rho_{2c}(E) = \lim_{\eta\searrow 0} \frac{1}{\pi}\Im\, \hat m_{2c}(E+i\eta).
\end{equation}
The measure $\hat \rho_{2c}$ sometimes is called the multiplicative free convolution of $\hat \pi$ and the MP law, see e.g. \cite{AGZ,VDN}. Again, $\hat m_{1c}$ and $\hat \rho_{1c}(z)$ can be obtained easily with (\ref{def21}) and (\ref{barm}). 

Similar to (\ref{deformed_MP}), for any finite $N$ we define $m^{(N)}_{2c}$ as the unique solution to the self-consistent equation
\begin{equation}\label{deformed_MP21}
\frac{1}{m^{(N)}_{2c}(z)} = - z + d^{-1}\int\frac{x}{1+m^{(N)}_{2c}(z) x} \pi_N(dx), \ \ \Im \, m^{(N)}_{2c}(z) \ge 0  \text{ for } z\in \mathbb C_+,
\end{equation}
and define $\rho^{(N)}_{2c}$ through the inverse formula as in (\ref{ST_inverse}). Then $m^{(N)}_{1c}$ and $\rho^{(N)}_{1c}(z)$ are defined with (\ref{def21}) and (\ref{barm}). In the following, we often omit the super-index $N$ from our notations. The properties of $m_{1,2c}$ and $\rho_{1,2c}$ have been studied extensively studied; see e.g. \cite{Bai1998,Bai2006,BPZ,HHN,KY2,Silverstein1995,SC}. Here we collect some results that will be used in our proof. In particular, we need to define the rightmost edge (i.e. the {\it{soft edge}}) of $\rho_{1,2c}$.

Corresponding to the equation in (\ref{deformed_MP21}), we define the function 
\begin{equation}\label{deformed_MP2}
f(m):=- \frac{1}{m}+ d^{-1}\int\frac{x}{1 + m x} \pi_N(dx).
\end{equation}
Then $m_{2c}(z)$ can be characterized as the unique solution to the equation $z= f(m)$ with $\Im \, m\ge 0.$
\begin{lem}[Support of the deformed MP law]
The densities $\rho_{1c}$ and $\rho_{2c}$ have the same support on $\mathbb R_+$, which is a union of connected components:
\begin{equation}\label{support_rho1c}
{\rm{supp}} \, \rho_{1,2c} = \bigcup_{k=1}^p [a_{2k}, a_{2k-1}] \subset [0,\infty),
\end{equation}
where $p\in \mathbb N$ depends only on $\pi$. Here $a_k$ are characterized as following: there exists a real sequence $\{b_k\}_{k=1}^{2p}$ such that $(x,m)=(a_k, b_k)$ are the real solutions to the equations
\begin{equation}
x = f(m), \ \ \text{and} \ \ f'(m) = 0. \label{equationEm2}
\end{equation}
Moreover, we have $b_1 \in (-\sigma_1^{-1}, 0)$. Finally, under assumptions (\ref{assm2}) and (\ref{assm3}) we have $a_1 \le C$ for some positive constant $C$ depending only on $d$ and $\tau$. 
\end{lem}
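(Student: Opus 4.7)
The plan is to carry out the standard Silverstein--Choi analysis of the self-consistent equation (\ref{deformed_MP21}), viewing $f$ defined in (\ref{deformed_MP2}) as a meromorphic function of a real variable $m$ on $\mathbb{R} \setminus (\{0\} \cup \{-\sigma_i^{-1} : \sigma_i > 0\})$. The coincidence of the supports of $\rho_{1c}$ and $\rho_{2c}$ on the positive real axis is immediate from (\ref{def21}), since the two measures differ at most by a point mass at $0$; it therefore suffices to analyze $\rho_{2c}$.

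The central step is to establish the characterization
\[
\mathbb{R} \setminus \mathrm{supp}\,\rho_{2c} \;=\; f\bigl(\{\, m \in \mathbb{R}\setminus\mathrm{poles}(f) : f'(m) > 0 \,\}\bigr).
\]
For the ``only if'' direction, on the resolvent set $m_{2c}(x)$ is real and analytic, so differentiating $x = f(m_{2c}(x))$ yields $f'(m_{2c}(x))\, m_{2c}'(x) = 1$; positivity of $m_{2c}'(x)$ (Stieltjes transform of a positive measure) then forces $f'(m_{2c}(x)) > 0$. For the ``if'' direction, near a real $m_0$ with $f'(m_0) > 0$ the map $f$ is locally invertible with real boundary values from $\mathbb{C}_+$, so by the uniqueness of the solution to (\ref{deformed_MP21}) this local inverse must agree with $m_{2c}$, placing $f(m_0)$ outside the support. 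It follows that $\mathbb{R} \setminus \mathrm{supp}\,\rho_{2c}$ is a finite disjoint union of open intervals, whose finite endpoints are critical points $b_k$ of $f$, giving precisely the system (\ref{equationEm2}) for the edges $a_k = f(b_k)$. The count $p$ is then controlled by the number of poles and critical points of $f$, all of which depend only on $\pi_N$.

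To locate the rightmost edge, I would restrict attention to $I := (-\sigma_1^{-1}, 0)$. Since $\pi_N$ is a discrete measure with an atom at $\sigma_1$ of mass $\ge 1/M$, the integrand $x/(1+mx)$ evaluated at $x=\sigma_1$ blows up as $m \downarrow -\sigma_1^{-1}$, forcing $f(m) \to +\infty$ at the left endpoint; likewise $f(m) \to +\infty$ as $m \to 0^-$ through the $-1/m$ term. Hence $f$ attains its minimum on $I$ at some interior critical point $b_1 \in (-\sigma_1^{-1}, 0)$, and $a_1 := f(b_1)$ is a critical value. The characterization above then shows $a_1$ is exactly the rightmost edge: for $x > a_1$ the equation $f(m) = x$ has two real solutions in $I$ with $f' \neq 0$ there, so $x \notin \mathrm{supp}\,\rho_{2c}$; for $x < a_1$ sufficiently close, no real solution exists in $I$, forcing $\Im\, m_{2c}(x) > 0$ and $x \in \mathrm{supp}\,\rho_{2c}$. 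The bound $a_1 \le C(d, \tau)$ then follows by testing $f$ at $m_0 = -\tau/2 \in I$ (legitimate since $\sigma_1 \le \tau^{-1}$ implies $-\sigma_1^{-1} \le -\tau$) and noting $|1 + m_0 \sigma_i| \ge 1/2$ for all $\sigma_i \in [0, \tau^{-1}]$, which yields $a_1 \le f(m_0) \le 2/\tau + 2/(d\tau)$.

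The main technical obstacle is verifying that the critical points produced by the above analysis are all genuine edges and giving a precise finite count $p$. A priori $f$ could have critical points lying in the region where $f' < 0$ that do not correspond to any edge, and one must show these do not arise in the ``exterior'' components. This requires a careful enumeration of the connected components of the domain of $f$ together with monotonicity arguments on each, using that on any open interval where $f' > 0$ throughout, $f$ is strictly increasing and hence realizes at most one exterior component of the support.
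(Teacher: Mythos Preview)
The paper does not give its own proof of this lemma; immediately after the statement it writes ``For the proof of this lemma, one can refer to Lemma 2.6 and Appendix A.1 of \cite{KY2}.'' Your outline follows the standard Silverstein--Choi analysis of the self-consistent equation, which is exactly the approach used in that reference, so there is no substantive divergence to discuss. Your explicit bound $a_1 \le f(-\tau/2) \le 2/\tau + 2/(d\tau)$ is correct and makes the dependence on $d,\tau$ quantitative, which the paper does not spell out.
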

For the proof of this lemma, one can refer to Lemma 2.6 and Appendix A.1 of \cite{KY2}. It is easy to observe that $m_{2c}(a_k)=b_k$ according to the definition of $f$. We shall call $a_k$ the edges of the deformed MP law $\rho_{2c}$. In particular we will focus on the rightmost edge $\lambda_r := a_1$
throughout the following. To establish our result, we need the following extra assumption.
\begin{assm}\label{assm_big2}
For $d_1$ defined in (\ref{simple_assumption}), we assume there exists a small constant $\tau>0$ such that 
\begin{equation}\label{assm_gap}
\left|1 + m_{2c}(\lambda_r) \sigma_1 \right|\ge \tau, \ \ \text{for all } N.
\end{equation}
\end{assm}

\begin{rem} 
The above assumption has previously appeared in \cite{BPZ1,DXC,Karoui,KY2}. It guarantees a regular square-root behavior of the spectral densities $\rho_{1,2c}$ near $\lambda_r$ (see Lemma \ref{lem_mbehavior} below), which is used to prove the local deformed MP law at the soft edge. Note that $f(m)$ has singularities at $m = - \sigma_i^{-1}$ for nonzero $\sigma_i$, so the condition (\ref{assm_gap}) simply rules out the singularity of $f$ at $m_{2c}(\lambda_r)=m_1$.  
\end{rem}

\subsection{Main result}

The main result of this paper is the following theorem. It establishes the necessary and sufficient condition for the edge universality of the deformed covariance matrices $\mathcal Q_{1,2}$ at the soft edge $\lambda_r$.

\begin{thm} \label{main_thm}
Let $\mathcal Q_2 = X^* T^* T X$ be an $N \times N$ sample covariance matrix with $X$ and $T$ satisfying Assumptions \ref{assm_big1} and \ref{assm_big2}. Let $\lambda_1$ be the largest eigenvalues of $\mathcal Q_2$.

\begin{itemize}

\item {\bf Sufficient condition}: If the tail condition (\ref{INTRODUCTIONEQ}) holds, then we have
\begin{equation}
\lim_{N\to \infty}\mathbb{P}(N^{{2}/{3}}(\lambda_1 - \lambda_r) \leq s) = \lim_{N\to \infty} \mathbb{P}^G(N^{{2}/{3}}(\lambda_1 - \lambda_r) \leq s), \label{SUFFICIENT}
\end{equation}
for all $s\in \mathbb R$, where $\mathbb P^G$ denotes the law for $X$ with {\it{i.i.d.}} Gaussian entries.

\item {\bf Necessary condition}: If the condition (\ref{INTRODUCTIONEQ}) does not hold for $X$, then for any fixed $s > \lambda_r $, we have
\begin{equation}
\limsup_{N \rightarrow \infty} \mathbb{P}(\lambda_1 \geq s)>0. \label{NECESSARYPART}
\end{equation}

\end{itemize}
\end{thm}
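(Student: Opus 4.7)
The theorem splits into two independent halves. For the \textbf{necessary direction}, assume (\ref{INTRODUCTIONEQ}) fails, so there exist $\delta>0$ and $s_k\to\infty$ with $s_k^4\mathbb{P}(|q_{11}|\geq s_k)\geq\delta$. Fix $s>\lambda_r$, set $L:=\sqrt{s/\tau}$ with $\tau$ from (\ref{assm3}), and choose integers $N_k$ with $L\sqrt{N_k}\leq s_k<L\sqrt{N_k+1}$. Then $|x_{ij}|\geq L$ is equivalent to $|q_{ij}|\geq L\sqrt{N_k}\leq s_k$, and monotonicity yields $\mathbb{P}(|x_{ij}|\geq L)\geq \delta/s_k^4\geq \delta/(4L^4 N_k^2)$ along this subsequence. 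Assumption (\ref{assm3}) guarantees at least $\tau M$ indices $i$ with $\sigma_i\geq\tau$, hence $\geq\tau M N_k$ i.i.d.\ candidate entries in those ``good'' rows; by independence,
\begin{equation*}
\mathbb{P}\bigl(\exists\,(i,j):\sigma_i\geq\tau,\ |x_{ij}|\geq L\bigr)\geq 1-\bigl(1-\delta/(4L^4 N_k^2)\bigr)^{\tau M N_k}\geq 1-e^{-c}>0
\end{equation*}
for some $c=c(\delta,\tau,d,L)>0$ and all sufficiently large $k$. On that event $\|TX\|^2\geq\sigma_i|x_{ij}|^2\geq\tau L^2=s$, hence $\lambda_1\geq s$, which yields (\ref{NECESSARYPART}).

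The \textbf{sufficient direction} follows the cut-off/comparison strategy of \cite{LY}. Fix a small $\epsilon>0$ and introduce the truncated matrix $\widetilde X$ with entries
\begin{equation*}
\widetilde x_{ij}:=x_{ij}\mathbf{1}\{|\sqrt N\, x_{ij}|\leq N^{1/2-\epsilon}\},
\end{equation*}
recentered and rescaled so that $\mathbb{E}\widetilde x_{ij}=0$ and $\mathbb{E}|\widetilde x_{ij}|^2=N^{-1}$. The tail condition (\ref{INTRODUCTIONEQ}) implies both $\mathbb{P}(X\neq\widetilde X)=o(1)$ (only a vanishing fraction of entries is truncated) and the operator-norm bound $\|T(X-\widetilde X)\|=o(N^{-2/3})$ with high probability, so Weyl's inequality for singular values yields $|\lambda_1(TX(TX)^*)-\lambda_1(T\widetilde X(T\widetilde X)^*)|=o(N^{-2/3})$. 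This reduces (\ref{SUFFICIENT}) to Tracy--Widom edge universality for $\widetilde X$, whose entries are deterministically bounded by $CN^{-\epsilon}$. I plan to invoke the base case of Lemmas \ref{lem_small} and \ref{lem_smallcomp}, which establish Tracy--Widom fluctuation for matrices with ``small support'' $N^{-\phi}$ for some $1/3<\phi\leq 1/2$, and then interpolate between the small and large support regimes via a Green function comparison. Since $\mathcal{Q}_2$ is \emph{quadratic} in $X$, direct resolvent comparison is awkward; instead I work with the self-adjoint linearization $H$ of Definition \ref{linearize_block}, whose resolvent depends \emph{linearly} on the entries of $X$ and whose spectrum encodes the singular values of $TX$. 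An edge statistic of the form $\mathrm{Tr}\,\chi\bigl(N^{2/3}(H-\lambda_r)\bigr)$ is then expanded entry-by-entry along a Lindeberg interpolation from the small-support reference to $\widetilde X$, with the first three moments matched by rescaling.

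The \textbf{main obstacle} is controlling the fourth-order Taylor remainder in this interpolation. Because (\ref{INTRODUCTIONEQ}) does \emph{not} guarantee a finite fourth moment, the cut-off yields only $\mathbb{E}|\sqrt N\,\widetilde x_{ij}|^4=o(N^{4\epsilon})$ instead of $O(1)$, so a naive estimate loses a factor of $N^{4\epsilon}$ per entry and cannot close. The plan is to absorb this marginal blow-up into the sharp entrywise estimates on the resolvent $(H-z)^{-1}$ near the soft edge $\lambda_r$ supplied by the local deformed MP law; those estimates rely in turn on the square-root decay of $\rho_{1,2c}$ at $\lambda_r$, which is secured by Assumption \ref{assm_big2} (see Lemma \ref{lem_mbehavior}). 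Careful bookkeeping of the off-diagonal Green function factors in each fourth-order term should yield an extra power of $N^{-1/3}$, so that after summation over $\sim N^2$ entries the total error is $o(1)$ provided $\epsilon$ is chosen sufficiently small. Combining the cut-off reduction with this Green function comparison then yields (\ref{SUFFICIENT}) and completes the proof of Theorem \ref{main_thm}.
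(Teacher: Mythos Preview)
Your necessary direction is correct and essentially identical to the paper's argument.

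The sufficient direction contains a genuine gap in the cutoff step. You assert that $\|T(X-\widetilde X)\|=o(N^{-2/3})$ with high probability, but this is false. The tail condition gives $\mathbb{P}(|q_{11}|>N^{1/2-\epsilon})=o(N^{-2+4\epsilon})$, so among the $\sim N^2$ entries the expected number exceeding the threshold is $o(N^{4\epsilon})$, not $o(1)$; in particular $\mathbb{P}(X\neq\widetilde X)\not\to 0$. Worse, every truncated entry satisfies $|x_{ij}|\geq N^{-\epsilon}$, so whenever even one entry is truncated we have $\|X-\widetilde X\|\geq N^{-\epsilon}\gg N^{-2/3}$. Weyl's inequality therefore gives nothing here.

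The paper's route around this is quite different. It writes the large part as a sparse perturbation $E$ which, with probability $1-o(1)$, has at most $N^{5\epsilon}$ nonzero entries lying in \emph{distinct} rows and columns and bounded in absolute value by a small constant $\omega$ (this last truncation uses the tail condition once more at level $\omega\sqrt N$). Thus $E$ is a low-rank perturbation of the linearized matrix $H^s$. The effect on $\lambda_1$ is then controlled not by Weyl but by a determinant identity: $\mu$ is an eigenvalue of $(X^s+\gamma E)^*D^*D(X^s+\gamma E)$ iff $\det\bigl(V^*G^s(\mu)V+(\gamma P_D)^{-1}\bigr)=0$, where $V$ spans the range of the rank-$2s$ perturbation. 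At $\mu=\lambda_1^s\pm N^{-3/4}$ the matrix inside is diagonally dominant because the diagonal contains $(\gamma E_{ii})^{-1}\geq\omega^{-1}$ while the off-diagonal Green function entries are $O(N^{-1/6})$ by Lemma~\ref{thm_largebound} and a level repulsion estimate. Continuity in $\gamma$ then traps $\lambda_1^E$ within $N^{-3/4}$ of $\lambda_1^s$. This argument, not an operator-norm bound, is what makes the cutoff work.

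A secondary point: your fourth-moment concern is overstated. Integration by parts against the tail condition gives $\mathbb E|q^s_{ij}|^4=O(\log N)$, not $o(N^{4\epsilon})$; see (\ref{estimate_qs}). The paper exploits this by constructing (Lemma~\ref{lem_decrease}) a small-support matrix matching the first \emph{four} moments of $X^s$, so the Lindeberg comparison in Section~\ref{comparison} has no fourth-order error at all---the remainder starts at order five. Your plan to match only three moments and then scavenge an extra $N^{-1/3}$ from resolvent bounds is unnecessary and would be harder to close.
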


\begin{rem}
In \cite{LS}, it was proved that there exists $\gamma_0 \equiv \gamma_0(N)$ depending only on the ESD $\pi_N$ of $\Sigma$ and the aspect ratio $d$ such that
$$\lim_{N\to \infty}\mathbb{P}^G \left(\gamma_0 N^{{2}/{3}}(\lambda_1 - \lambda_r) \leq s \right) = F_1(s) $$
for all $s\in \mathbb R$, where $F_1$ is the type-1 Tracy-Widom distribution. The scaling factor $\gamma_0$ is given by \cite{Karoui}
$$\frac{1}{\gamma_0^3} = \frac{1}{d}\int \left(\frac{x}{1 + m_{2c}(\lambda_r) x}\right)^3 \pi_N(dx) - \frac{1}{m_{2c}(\lambda_r)^3},$$
and Assumption \ref{assm_big2} assures that $\gamma_0 = O(1)$ for all $N$. Hence (\ref{SUFFICIENT}) and (\ref{NECESSARYPART}) together show that the distribution of the rescaled largest eigenvalue of $\mathcal Q_2$ converges to the Tracy-Widom distribution if and only if the condition (\ref{INTRODUCTIONEQ}) holds. 
 \end{rem}
 
\begin{rem}\label{finite_correlation}
The universality result (\ref{SUFFICIENT}) can be extended to the joint distribution of the $k$ largest eigenvalues for any fixed $k$:
\begin{equation}
\lim_{N\to \infty}\mathbb{P}\left( \left(N^{{2}/{3}}(\lambda_i - \lambda_r) \leq s_i\right)_{1\le i \le k} \right) = \lim_{N\to \infty} \mathbb{P}^G\left(\left(N^{{2}/{3}}(\lambda_i - \lambda_r) \leq s_i\right)_{1\le i \le k} \right), \label{SUFFICIENT2}
\end{equation}
for all $s_1 , s_2, \ldots, s_k \in \mathbb R$. Let $H^{GOE}$ be an $N\times N$ random matrix belonging to the Gaussian orthogonal ensemble (GOE). The joint distribution of the $k$ largest eigenvalues of $H^{GOE}$, $\mu^{GOE}_1 \ge \ldots \ge \mu_k^{GOE}$, can be written in terms of the Airy kernel for any fixed $k$ \cite{Forr}. It was proved in \cite{LS} that
$$ \lim_{N\to \infty} \mathbb{P}^G \left(\left(\gamma_0 N^{{2}/{3}}(\lambda_i - \lambda_r) \leq s_i\right)_{1\le i \le k} \right) = \lim_{N\to \infty} \mathbb{P}\left(\left( N^{{2}/{3}}(\mu_i^{GOE} - 2) \leq s_i\right)_{1\le i \le k} \right), $$
for all $s_1 , s_2, \ldots, s_k \in \mathbb R$. Hence (\ref{SUFFICIENT2}) gives a complete description of the finite-dimensional correlation functions of the extremal eigenvalues of $\mathcal Q_2$.
 \end{rem}

\section{Basic notations and tools}\label{sec_maintools}

\subsection{Notations}

Following the notations in \cite{EKYY,EKYY1}, we will use the following definition to characterize events of high probability.

\begin{defn}[High probability event] \label{high_prob}
Define
\begin{equation}\label{def_phi}
\varphi:=(\log N)^{\log \log N}.
\end{equation} 
We say that an $N$-dependent event $\Omega$ holds with $\xi$-high probability if there exists constant $c,C>0$ independent of $N$, such that
\begin{equation}
\mathbb{P}(\Omega) \geq 1-N^{C} \exp(- c\varphi^{\xi}),  \label{D25}
\end{equation}
for all sufficiently large $N$. For simplicity, for the case $\xi=1$, we just say high probability. Note that if $\Omega$ holds with $\xi$-high probability, then $\mathbb P(\Omega) \ge 1 - \exp(-c'\varphi^{\xi})$ for any $0\le c' <c$. 
\end{defn}

\begin{defn}[Bounded support condition] \label{defn_support}
A family of $M\times N$ matrices $X =(x_{ij})$ are said to satisfy the bounded support condition with $q\equiv q(N)$ if
\begin{equation}
\mathbb{P}\left(\max_{1\le i \le M,1\le j \le N}\vert x_{ij}\vert \le q\right) \geq 1-e^{-N^c}, \label{eq_support}
\end{equation}
for some $c>0$. 
Here $q\equiv q(N)$ depends on $N$ and usually satisfies
$$ N^{-{1}/{2}}\log N \leq q \leq N^{- \phi}, $$
for some small positive constant $\phi$. Whenever (\ref{eq_support}) holds, we say that $x_{ij}$ has support $q$.
\end{defn}
\begin{rem}
Note that the Gaussian distribution satisfies the condition (\ref{eq_support}) with $q< N^{-\phi}$ for any $\phi<1/2$. We also remark that by Definition \ref{high_prob}, the event $\left\{\vert x_{ij}\vert \le q, \forall 1\le i \le M,1\le j \le N\right\}$ in (\ref{eq_support}) holds with $\xi$-high probability for any constant $\xi>0$. For this reason, the bad event $\left\{\vert x_{ij}\vert \ge q \text{ for some }i,j\right\}$ is negligible, and we will not consider the case it happens throughout the proof.
\end{rem}

Next we introduce a convenient self-adjoint linearization trick, which has been proved to be useful in studying the local laws of deformed sample random matrices \cite{DXC, KY2,XYY}. We define the following $(N+M)\times (N+M)$ block matrix, which is a linear function of $X$.
\begin{defn}[Linearizing block matrix]\label{def_linearHG}
For $z\in \mathbb C_+ $, we define the $(N+M)\times (N+M)$ self-adjoint matrices
 \begin{equation}\label{linearize_block}
   H \equiv H(X): = \left( {\begin{array}{*{20}c}
   { 0 } & DX  \\
   {(DX)^*} & {0}  \\
   \end{array}} \right),
 \end{equation}
and
 \begin{equation}\label{eqn_defG}
 G \equiv G (X,z):= \left( {\begin{array}{*{20}c}
   { - I_{M\times M}} & DX  \\
   {(DX)^*} & { - zI_{N\times N}}  \\
\end{array}} \right)^{-1}.
 \end{equation}
\end{defn}
\begin{defn}[Index sets]\label{def_index} 
We define the index sets
\[\mathcal I_1:=\{1,...,M\}, \ \ \mathcal I_2:=\{M+1,...,M+N\}, \ \ \mathcal I:=\mathcal I_1\cup\mathcal I_2.\]
Then we label the indices of the matrices according to 
$$X= (X_{i\mu}:i\in \mathcal I_1, \mu \in \mathcal I_2) \ \ \text{and} \ \ D={\rm{diag}}(D_{ii}: i\in \mathcal I_1).$$  
In the following, whenever referring to the entries of $H$ and $G$, we will consistently use the latin letters $i,j\in\mathcal I_1$, greek letters $\mu,\nu\in\mathcal I_2$, and $a,b\in\mathcal I$. For $1\le i \le \min\{N,M\}$ and $M+1 \le \mu  \le M+\min\{N,M\}$, we introduce the notations $\bar i:=i+M \in \mathcal I_2$ and $\bar\mu:=\mu-M \in \mathcal I_1$. For any $\mathcal I \times \mathcal I$ matrix $A$, we denote the $2\times 2$ submatrices $A_{[ij]}$ as
\begin{equation}\label{Aij_group}
A_{[ij]}=\left( {\begin{array}{*{20}c}
   {A_{ij} } & {A_{i\bar j} }  \\
   {A_{\bar i j} } & {A_{\bar i\bar j} }  \\
\end{array}} \right), \ \ 1\le i,j \le \min\{N,M\}.
\end{equation}
We shall call $A_{[ij]}$ a diagonal group if $i=j$, and an off-diagonal group otherwise .
\end{defn}

It is easy to verify that the eigenvalues $\lambda_1(H)\ge \ldots \ge \lambda_{M+N}(H)$ of $H$ are related to the ones of $\mathcal Q_1$ through
\begin{equation}\label{Heigen}
\lambda_i(H)=-\lambda_{N+M-i+1}(H)=\sqrt{\lambda_i\left(\mathcal Q_2\right)}, \ \ 1\le i \le N\wedge M,
\end{equation}
and
$$\lambda_i(H)=0, \ \ N\wedge M + 1 \le i \le N\vee M,$$
where we used the notations $N\wedge M:=\min\{N,M\}$ and $N\vee M:=\max\{N,M\}$. Furthermore, by Schur complement formula, we can verify that
\begin{align} 
G & = \left( {\begin{array}{*{20}c}
   { z(DXX^*D^*-z)^{-1}} &  (DXX^*D^*-z)^{-1}DX \\
   {X^*D^*(DXX^*D^*-z)^{-1}} & { (X^*D^* D X-z)^{-1}}  \\
\end{array}} \right) \nonumber\\
&= \left( {\begin{array}{*{20}c}
   { z\mathcal G_1} & \mathcal G_1 DX  \\
   {X^*D^* \mathcal G_1} & { \mathcal G_2 }  \\
\end{array}} \right) = \left( {\begin{array}{*{20}c}
   { z\mathcal G_1} & DX\mathcal G_2   \\
   {\mathcal G_2}X^*D^* & { \mathcal G_2 }  \\ 
\end{array}} \right). \label{green2}
\end{align}
Thus a control of $G$ yields directly a control of the resolvents $\mathcal G_{1,2}$ defined in (\ref{def_green}). By (\ref{green2}), we immediately get that
$$m_1=\frac{1}{Mz}\sum_{i\in \mathcal I_1}G_{ii}, \ \ m_2=\frac{1}{N}\sum_{\mu \in \mathcal I_2}G_{\mu\mu}.$$
 
Next we introduce the spectral decomposition of $G$. Let
$$DX = \sum\limits_{k = 1}^{N\wedge M} {\sqrt {\lambda_k} \xi_k } \zeta _{k}^* ,$$
be the singular value decomposition of $DX$, where
$$\lambda_1\ge \lambda_2 \ge \ldots \ge \lambda_{N\wedge M} \ge 0 = \lambda_{N\wedge M+1} = \ldots = \lambda_{N\vee M},$$
and $\{\xi_{k}\}_{k=1}^{M}$ and $\{\zeta_{k}\}_{k=1}^{N}$ are orthonormal bases of $\mathbb R^{\mathcal I_1}$ and $\mathbb R^{\mathcal I_2}$, respectively. Then using (\ref{green2}), we can get that for $i,j\in \mathcal I_1$ and $\mu,\nu\in \mathcal I_2$,
\begin{align}
G_{ij} = \sum\limits_{k = 1}^{M} \frac{z\xi_k(i) \xi_k^*(j)}{\lambda_k-z},\ \quad \ &G_{\mu\nu} = \sum\limits_{k = 1}^{N} \frac{\zeta_k(\mu) \zeta_k^*(\nu)}{\lambda_k-z}, \label{spectral1}\\
G_{i\mu} = \sum\limits_{k = 1}^{N\wedge M} \frac{\sqrt{\lambda_k}\xi_k(i) \zeta_k^*(\mu)}{\lambda_k-z}, \ \quad \ &G_{\mu i} = \sum\limits_{k = 1}^{N\wedge M} \frac{\sqrt{\lambda_k}\zeta_k(\mu) \xi_k^*(i)}{\lambda_k-z}.\label{spectral2}
\end{align}

\subsection{Main tools}\label{sec_tools}

For small constant $c_0>0$ and large constants $C_0, C_1 >0$, we define the domain of the spectral parameter $z=E+i\eta$ by
\begin{equation}
S(c_0,C_0,C_1)= \left\{z=E+i \eta \in \mathbb{C}_+: \lambda_r - c_0 \leq E \leq C_0 \lambda_r, \ \varphi^{C_1} N^{-1} \leq \eta \leq 1 \right\}. \label{SSET1}
\end{equation}
We define the distance to the rightmost edge as
\begin{equation}
 \kappa := \vert E -\lambda_r\vert , \ \ z= E+i\eta.\label{KAPPA}
\end{equation}
Then we have the following lemma, which summarizes some basic behaviors of $m_{1,2c}$ and $\rho_{1,2c}$.

\begin{lem}[Lemmas 2.1 and 2.3 in \cite{BPZ}]\label{lem_mbehavior}
There exists sufficiently small constant $\tilde c>0$ such that 
\begin{equation}
\rho_{2c}(x) \sim \sqrt{\lambda_r-x}, \ \ \text{ for } x \in \left[\lambda_r - 2\tilde c,\lambda_r \right].\label{SQUAREROOT}
\end{equation}
The Stieltjes transforms $m_{2c}$ satisfy that
\begin{equation}\label{Immc}
\vert m_{2c}(z) \vert \sim 1, 
\end{equation}
and
\begin{equation}
  \operatorname{Im} m_{2c}(z) \sim \begin{cases}
    {\eta}/{\sqrt{\kappa+\eta}}, & E\geq \lambda_r \\
    \sqrt{\kappa+\eta}, & E \le \lambda_r\\
  \end{cases},  \label{SQUAREROOTBEHAVIOR}
\end{equation}
for $z = E+i\eta\in S(\tilde c,C_0,0).$
\end{lem}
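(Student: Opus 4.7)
The plan is to analyze the self-consistent equation $z = f(m)$ from (\ref{deformed_MP21})--(\ref{deformed_MP2}) near the critical point $(m,z) = (b_1, \lambda_r)$, exploiting the fact that $f'(b_1) = 0$ by (\ref{equationEm2}). Since $f$ is smooth at $b_1$, Taylor expansion gives
\[
z - \lambda_r \;=\; \frac{f''(b_1)}{2}(m - b_1)^2 + O\bigl(|m - b_1|^3\bigr),
\]
and inverting this square-root relation produces all three asymptotic statements at once.

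The first real task is the two-sided quantitative bound $|b_1| \sim 1$. Since $b_1 \in (-\sigma_1^{-1}, 0)$, combining with $\sigma_1 \ge \tau$ (which follows from (\ref{assm3}): if every $\sigma_i \le \tau$ then $\pi_N([0,\tau]) = 1 > 1-\tau$) yields $|b_1| \le \sigma_1^{-1} \le \tau^{-1}$. For the lower bound I use the edge equation
\[
\lambda_r \;=\; f(b_1) \;=\; -\frac{1}{b_1} + d^{-1}\!\int \frac{x}{1+b_1 x}\,\pi_N(dx);
\]
both terms are nonnegative (for $x$ in the support of $\pi_N$, $1 + b_1 x \ge 1 + b_1 \sigma_1 = |1 + b_1\sigma_1| \ge \tau$ by Assumption \ref{assm_big2}), so $-1/b_1 \le \lambda_r \le C$, giving $|b_1| \ge C^{-1}$. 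This establishes $|m_{2c}(\lambda_r)| \sim 1$; the full statement (\ref{Immc}) on $S(\tilde c, C_0, 0)$ then follows from continuity of $m_{2c}$ on the closure of $S$ together with the observation that $m_{2c}$ cannot vanish there (else $1/m_{2c}$ would blow up while the right-hand side of (\ref{deformed_MP21}) stays bounded).

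With $|b_1| \sim 1$ in hand, I compute
\[
f''(m) \;=\; -\frac{2}{m^3} + 2 d^{-1}\!\int \frac{x^3}{(1+mx)^3}\,\pi_N(dx)
\]
and evaluate at $m = b_1 < 0$. The first term equals $2/|b_1|^3 \ge 2\tau^3$. For the integrand, the bound $|1 + b_1 \sigma_i| \ge 1 - |b_1|\sigma_i \ge 1 - |b_1|\sigma_1 \ge \tau$ for every $i$ (valid since $\sigma_i \le \sigma_1$ and $b_1<0$) combined with $\sigma_i \le \tau^{-1}$ makes the integral nonnegative and uniformly bounded. Hence $f''(b_1) \sim 1$ and in particular $f''(b_1)>0$. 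Inverting the Taylor expansion and choosing the branch with $\Im\sqrt{\cdot} \ge 0$ on $\mathbb{C}_+$ (forced by $\Im m_{2c} \ge 0$) gives
\[
m_{2c}(z) - b_1 \;=\; \sqrt{\tfrac{2(z - \lambda_r)}{f''(b_1)}}\,\bigl(1 + O(|z-\lambda_r|^{1/2})\bigr).
\]
A routine computation of $\Im\sqrt{\pm\kappa + i\eta}$, which is of order $\eta/\sqrt{\kappa+\eta}$ for the $+$ sign and $\sqrt{\kappa+\eta}$ for the $-$ sign, yields (\ref{SQUAREROOTBEHAVIOR}). Finally, plugging $E \in [\lambda_r - 2\tilde c, \lambda_r]$ (so $\kappa = \lambda_r - E$, $\eta \searrow 0$) into the Stieltjes inverse formula (\ref{ST_inverse}) gives (\ref{SQUAREROOT}).

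The main technical hurdle will be the quantitative control of $f''(b_1)$, since it requires combining all three inputs -- the gap condition (\ref{assm_gap}), the non-concentration of $\pi_N$ from (\ref{assm3}), and the location $b_1 \in (-\sigma_1^{-1},0)$ -- into uniform-in-$N$ bounds. Once $f''(b_1) \sim 1$ is established, the remainder is standard complex-analytic square-root inversion, which can be made rigorous via the implicit function theorem applied to $z = f(m)$ on a fixed neighborhood of $(b_1, \lambda_r)$.
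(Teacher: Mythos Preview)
The paper does not give its own proof of this lemma; it is stated with attribution to Lemmas~2.1 and~2.3 of \cite{BPZ} and used as a black box. Your outline is the standard argument for square-root edge behavior via the self-consistent equation $z=f(m)$, and the quantitative estimates you isolate --- $|b_1|\sim 1$ and $f''(b_1)\sim 1$, obtained by combining Assumption~\ref{assm_big2}, (\ref{assm3}), and $b_1\in(-\sigma_1^{-1},0)$ --- are correct and are exactly what the cited reference establishes.

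Two small points to tighten. First, the inversion step is not literally an implicit-function-theorem application: you yourself use $f'(b_1)=0$, which is precisely where IFT fails. What works is to set $g(m):=\sqrt{f(m)-\lambda_r}$ (well defined and analytic near $b_1$ since $f''(b_1)>0$), observe $g'(b_1)\ne 0$, and invert $g$; this makes the Puiseux expansion and the branch choice $\Im m_{2c}\ge 0$ rigorous. Second, your Taylor argument only covers $|z-\lambda_r|\le\tilde c$, whereas $S(\tilde c,C_0,0)$ allows $E$ up to $C_0\lambda_r$. For $E\ge\lambda_r+\tilde c/2$ (outside $\mathrm{supp}\,\rho_{2c}$, bounded away from the edge) the elementary computation $\Im m_{2c}(z)=\int\eta[(x-E)^2+\eta^2]^{-1}\rho_{2c}(dx)\sim\eta$ matches $\eta/\sqrt{\kappa+\eta}$ since $\kappa\sim 1$ there; the case of large $\eta$ is similar. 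Both are routine.
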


\begin{rem}
Recall that $a_k$ are the edges of the spectral density $\rho_{2c}$; see (\ref{support_rho1c}). Hence $\rho_{2c}(a_k)=0$, and we must have $a_k < \lambda_r - 2\tilde c$ for $2\le k \le 2p$. In particular, $S(c_0,C_0,C_1)$ is away from all the other edges if we choose $c_0 \le \tilde c$. 
\end{rem}

\begin{defn} [Classical locations of eigenvalues]
The classical location $\gamma_j$ of the $j$-th eigenvalue of $\mathcal Q_2$ is defined as
\begin{equation}\label{gammaj}
\gamma_j:=\sup_{x}\left\{\int_{x}^{+\infty} \rho_{2c}(x)dx > \frac{j-1}{N}\right\}.
\end{equation}
\end{defn}
\begin{rem}
If $\gamma_j$ lies in the bulk of some component of $\rho_{2c}$, then by the continuity of $\rho_{2c}$ we can define $\gamma_j$ through
\begin{equation*}
\int_{\gamma_j}^{+\infty} \rho_{2c}(x)dx = \frac{j-1}{N}.
\end{equation*}
We can also define the classical location of the $j$-th eigenvalue of $\mathcal Q_1$ by changing $\rho_{2c}$ to $\rho_{1c}$ and $(j-1)/{N}$ to $(j-1)/{M}$ in (\ref{gammaj}). By (\ref{def21}), it gives the same location as $\gamma_j$ for $j\le N\wedge M$.
\end{rem}

\begin{defn}[Deterministic limit of $G$]
We define the deterministic limit $\Pi$ of the Green function $G$ in (\ref{green2}) as
\begin{equation}
\Pi (z): = \left( {\begin{array}{*{20}c}
   { -\left(1+m_{2c}(z)\Sigma \right)^{-1} } & 0  \\
   0 & {m_{2c}(z)I_{N\times N}}  \\
\end{array}} \right) ,
\end{equation}
where $\Sigma$ is defined in (\ref{def_Sigma}).
\end{defn}

In the rest of this section, we introduce some results that will be used in the proof of Theorem \ref{main_thm} in Section \ref{sec_cutoff}. Their proofs will be given in subsequent sections.

\begin{lem}
\label{lem_small}

Suppose Assumption \ref{assm_big1} holds and $X$ satisfies the bounded support condition (\ref{eq_support}) for some $q\le N^{-\phi}$ with $\phi$ being any positive constant. Let $c_1>0$ be sufficiently small and fix $C_0>0$. Then there exist constants $C_1,  \Lambda>0$ and $\xi_1 \ge 3$ such that the following results hold with $\xi_1$-high probability:
\begin{itemize}
\item[(1)] {\bf Local deformed MP law}:
\begin{equation}
\bigcap_{z \in S(2c_1,C_0,C_1) } \left\{ \vert m_2(z)-m_{2c}(z) \vert \leq \varphi^{C_1}\left(\min \left\{q,\frac{q^2}{\sqrt{\kappa+\eta}} \right\}+\frac{1}{N\eta} \right) \right\}, \label{MPBOUNDS}
\end{equation}
\begin{equation}\label{DIAGONAL}
\bigcap_{z \in S(2c_1,C_0,C_1)} \left\{ \max_{a,b \in \mathcal I} \vert G_{ab}(z)- \Pi_{ab}(z) \vert \leq \varphi^{ C_1}\left(q+ \sqrt{\frac{\operatorname{Im} m_{2c}(z) }{N \eta}}+ \frac{1}{N\eta}\right) \right\};
\end{equation}

\item[(2)] {\bf Bound on $\|H\|$}:
\begin{equation}
\|H\|\le  \Lambda ; \label{boundH}
\end{equation}

\item[(3)]{\bf Delocalization}:
\begin{equation}
\max_{k, i}|\xi_k(i)|^2+\max_{k, \mu}|\zeta_k(\mu)|^2 \le \frac{\varphi^{C_1}}{N}. \label{delocal}
\end{equation}
\end{itemize}

Furthermore if $q\le N^{-\phi}$ for some constant $\phi > {1}/{3}$, then the following rigidity result holds with $\xi_1$-high probability:
\begin{itemize}
\item[(4)] {\bf Rigidity of eigenvalues}:
\begin{equation}
\bigcap_{j: \lambda_r - c_1 \le \gamma_j \le \lambda_r}\left\{ \vert \lambda_j - \gamma_j \vert \leq \varphi^{C_1}\left( j^{-1/3}N^{-2/3} + q^2 \right) \right\}.\label{SEC}
\end{equation}
\end{itemize}
\end{lem}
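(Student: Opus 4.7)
The natural vehicle is the self-adjoint linearization $H$ and its resolvent $G$, whose entries are controlled via the Schur complement formula. The plan is to (i) derive an approximate self-consistent equation for $m_2$ from the block structure of $G$ and large-deviation bounds for quadratic forms in $X$, (ii) invert this equation via a stability analysis at the soft edge to obtain the averaged law (\ref{MPBOUNDS}) and entrywise law (\ref{DIAGONAL}), and (iii) harvest the remaining three statements from the local law by standard arguments.

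\textbf{Self-consistent equation via Schur complement.} Using Definition \ref{def_index} and the Schur complement formula applied to the $2\times 2$ diagonal block $G_{[ii]}$ (or, for $\mu\in\mathcal I_2$, the scalar $G_{\mu\mu}$), one can express each diagonal entry as an explicit rational function of a quadratic form in the entries of $X$ contracted against minors $G^{(i)}$ of $G$. Under the bounded support condition $|x_{ij}|\le q$, standard concentration estimates (Hanson--Wright type bounds adapted to the high-probability event) give
\[
\sum_{\mu,\nu\in\mathcal I_2} X_{i\mu} G^{(i)}_{\mu\nu} X_{i\nu}^{*} \;=\; \frac{1}{N}\operatorname{Tr} G^{(i)}|_{\mathcal I_2} \;+\; O_{\prec}\!\left(q\sqrt{\frac{\operatorname{Im} m_2}{N\eta}}\right),
\]
with analogous bounds for off-diagonal blocks. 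Inserting these into the Schur identities and using $N^{-1}\operatorname{Tr} G|_{\mathcal I_2}=m_2$, one obtains the approximate identity $z\,=\,f(m_2)+O(\text{error})$ with $f$ as in (\ref{deformed_MP2}). The bootstrap is launched from large $\eta\sim 1$, where $G$ is trivially close to $\Pi$, and then propagated down to $\eta\ge \varphi^{C_1}/N$ using a continuity argument on a fine lattice in $S(2c_1,C_0,C_1)$.

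\textbf{Stability and extraction of the four conclusions.} The stability analysis of $z=f(m)$ at the right edge relies on Lemma \ref{lem_mbehavior} and Assumption \ref{assm_big2}: Assumption \ref{assm_big2} keeps $f$ away from its singularities, while the square-root behavior (\ref{SQUAREROOT})--(\ref{SQUAREROOTBEHAVIOR}) governs the magnitude of $f'(m_{2c}(z))\sim \sqrt{\kappa+\eta}$. Inverting the perturbed equation produces the two-regime error $\min\{q,q^{2}/\sqrt{\kappa+\eta}\}+(N\eta)^{-1}$ in (\ref{MPBOUNDS}); a separate fluctuation-averaging argument (exploiting independence of the columns of $X$) is used to upgrade the naive $(N\eta)^{-1/2}$ averaged error into $(N\eta)^{-1}$. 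From the entrywise law (\ref{DIAGONAL}) at $\eta=\varphi^{C_1}/N$, the spectral decomposition (\ref{spectral1}) gives $|\xi_k(i)|^2\le \eta\,\operatorname{Im} G_{ii}(\lambda_k+i\eta)\le \varphi^{C'}/N$, proving the delocalization (\ref{delocal}). For the norm bound (\ref{boundH}), one argues that if $H$ had an eigenvalue above $\lambda_r+\Lambda$ for large $\Lambda$, then $\operatorname{Im} m_2(E+i\eta)$ near that eigenvalue would exceed the upper bound implied by (\ref{MPBOUNDS}) and (\ref{SQUAREROOTBEHAVIOR}) in the regime $E>\lambda_r$. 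Finally, the rigidity (\ref{SEC}) under the stronger condition $q\le N^{-\phi}$ with $\phi>1/3$ is obtained by integrating the averaged local law against suitable test functions (Helffer--Sj\"ostrand representation) to compare the eigenvalue counting function with $N\int_{E}^{\infty}\rho_{2c}$; the hypothesis $\phi>1/3$ ensures that the $q^{2}/\sqrt{\kappa+\eta}$ term is dominated by the standard edge fluctuation $N^{-2/3}j^{-1/3}$.

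\textbf{Main obstacle.} The hardest step is the fluctuation averaging in the edge regime where $f'(m_{2c})$ is near-degenerate: one cannot afford to lose powers of $1/\sqrt{\kappa+\eta}$ when averaging the resolvent identities, so the cancellation between off-diagonal terms must be tracked through the block structure of $H$ with $D$ non-scalar. A secondary difficulty is the bootstrap from support $N^{-1/2}$ (trivial via subexponential bounds) down to general $q\le N^{-\phi}$: the large-deviation inputs become weaker as $q$ increases toward $N^{-\phi}$, and the self-consistent equation must be analyzed to second order near the edge to produce the improvement $q\rightsquigarrow q^{2}/\sqrt{\kappa+\eta}$ that is indispensable for the rigidity in~(4).
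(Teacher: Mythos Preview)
Your overall strategy matches the paper's proof in Appendix~\ref{appendix1} closely: self-consistent equation via Schur complement and large-deviation bounds for the $Z$-variables, stability of $z=f(m)$ near the edge, a continuity/bootstrap argument from $\eta\sim 1$ down to small $\eta$, fluctuation averaging to sharpen the averaged law, delocalization from the entrywise law at $\eta=\varphi^{C_1}/N$, and rigidity via Helffer--Sj\"ostrand. These are exactly the paper's steps.

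There is one genuine gap, in your argument for the norm bound (\ref{boundH}). You propose to deduce it from the local law (\ref{MPBOUNDS}), but (\ref{MPBOUNDS}) is only established on the bounded domain $S(2c_1,C_0,C_1)$, i.e.\ for $E\le C_0\lambda_r$; it says nothing about eigenvalues that might lie beyond $C_0\lambda_r$. To close the argument you would need an a priori upper bound on $\|H\|$ from some other source, and that is precisely what (\ref{boundH}) is supposed to supply. The paper avoids this circularity by proving (\ref{boundH}) directly via the moment method (citing \cite[Lemma~4.3]{EKYY1}), completely independently of the local law. The order matters here also because the paper's delocalization proof uses (\ref{boundH}) as an \emph{input}: one needs $\lambda_k+i\eta_0\in S(2c_1,C_0,C_1)$ before applying (\ref{DIAGONAL}) at the eigenvalue location.

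A minor technical slip: your large-deviation error $O_\prec\bigl(q\sqrt{\Im m_2/(N\eta)}\bigr)$ should be additive, of the form $q+\sqrt{\Im m_2/(N\eta)}$ (cf.\ Lemma~\ref{largederivation}). The $q$-term arises from the diagonal part of the quadratic form and does not vanish with $\Im m_2$; it is in fact the source of the $q$ and $q^2/\sqrt{\kappa+\eta}$ contributions you ultimately need in (\ref{MPBOUNDS}).
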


\begin{lem}[Edge universality: small support case]\label{lem_smallcomp}
Let $X^W$ and $X^V$ be any two {\it{i.i.d.}} sample covariance matrices satisfying Assumption \ref{assm_big1} and the bounded support condition (\ref{eq_support}) for some $q\le N^{-\phi}$ with ${1}/{3} < \phi \leq {1}/{2}$. Then there exist constants $\epsilon,\delta >0$ such that, for any $s\in \mathbb R$, we have
\begin{equation} 
\mathbb{P}^V (N^{\frac{2}{3}}(\lambda_1-\lambda_{r}) \leq s-N^{-\epsilon})-N^{-\delta} \leq
\mathbb{P}^W(N^{\frac{2}{3}}(\lambda_1-\lambda_{r})\leq s) \leq \mathbb{P}^V(N^{\frac{2}{3}}(\lambda_1-\lambda_{r}) \leq s+N^{-\epsilon})+N^{-\delta}, \label{EDDDD}
\end{equation}
where $\mathbb{P}^V$ and $\mathbb{P}^W$ denote the laws of $X^V$ and $X^W$, respectively.
\end{lem}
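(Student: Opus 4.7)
\noindent\textbf{Proof proposal for Lemma \ref{lem_smallcomp}.}

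My plan is to carry out a Green function comparison (GFT) of Lindeberg swap type, using the self-adjoint linearization $H$ and its resolvent $G$ from Definition \ref{def_linearHG} as the working objects. The starting point is to rewrite the edge cumulative distribution function as the expectation of a smooth functional of $G$ so that Lindeberg-type moment matching becomes applicable.

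First, I would express $\mathbb{P}(N^{2/3}(\lambda_1 - \lambda_r) \le s)$ in terms of a smooth observable of $G$. Let $E := \lambda_r + s N^{-2/3}$ and $E_0 := \lambda_r + 1$; pick a scale $\eta_\ast = N^{-2/3-\epsilon_0}$ with a small $\epsilon_0 > 0$, and a small $\ell = N^{-2/3-9\epsilon_0}$. The usual trick is to note that the eigenvalue counting function
\[
\mathcal N(E) := \#\{i : \lambda_i(\mathcal Q_2) \ge E\}
\]
can be approximated, up to error $N^{-1-c}$ with high probability (via the Helffer--Sj\"ostrand representation and the local law (\ref{MPBOUNDS})), by the smoothed quantity
\[
\mathcal N_{\eta_\ast}(E) \; := \; \frac{N}{\pi} \int_E^{E_0} \operatorname{Im} m_2(x + i\eta_\ast)\, dx .
\]
One then chooses a smooth, monotone cutoff $K : \mathbb R \to [0,1]$ with $K(x)=1$ for $x\le 1/3$ and $K(x)=0$ for $x\ge 2/3$, and uses the rigidity (\ref{SEC}) together with the fact that $\mathbb P(\lambda_1 \in [E - \ell, E + \ell])$ is negligible (by a level repulsion estimate near the edge coming from Lemma \ref{lem_small}) to conclude
\[
\mathbb P\bigl(\lambda_1 \le E - \ell\bigr) - N^{-\delta'} \;\le\; \mathbb E\, K\bigl(\mathcal N_{\eta_\ast}(E)\bigr) \;\le\; \mathbb P\bigl(\lambda_1 \le E + \ell\bigr) + N^{-\delta'}
\]
for both $X^W$ and $X^V$. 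Since $\ell \ll N^{-2/3-\epsilon}$, it suffices to prove
\begin{equation}\label{eq:GFTgoal}
\bigl| \mathbb E^W K\bigl(\mathcal N_{\eta_\ast}(E)\bigr) - \mathbb E^V K\bigl(\mathcal N_{\eta_\ast}(E)\bigr) \bigr| \;\le\; N^{-\delta}
\end{equation}
uniformly in $s$ in a compact set, for some $\delta > 0$.

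Next I would prove (\ref{eq:GFTgoal}) by a Lindeberg swap through the $MN$ off-diagonal blocks of $H$. Enumerate the pairs $(i,\mu) \in \mathcal I_1 \times \mathcal I_2$ as $\gamma = 1, \dots, MN$, and for each $\gamma$ define $H^{(\gamma)}$ by replacing, in the entries $H_{i\mu} = (DX)_{i\mu} = D_{ii} X_{i\mu}$, the first $\gamma$ $X$-entries by those of $X^V$ and the rest by those of $X^W$. Writing $F(H) := K(\mathcal N_{\eta_\ast}(E))$ and denoting by $H^{(\gamma),0}$ the matrix with the $\gamma$-th $X$-entry set to zero, I would Taylor expand to fourth order:
\[
\mathbb E F(H^{(\gamma),W}) - \mathbb E F(H^{(\gamma),V})
= \sum_{k=1}^{4} \frac{1}{k!}\Bigl(\mathbb E (X^W_{i\mu})^k - \mathbb E (X^V_{i\mu})^k\Bigr)\, \mathbb E \partial^k_{X_{i\mu}} F(H^{(\gamma),0}) + \text{rem}.
\]
The $k=1,2$ contributions vanish because the first two moments of $X^W$ and $X^V$ match, so the leading terms are $k=3,4$. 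The bounded support (\ref{eq_support}) and $\mathbb E |X_{i\mu}|^2 = N^{-1}$ give the moment bounds $\mathbb E |X_{i\mu}|^k \le q^{k-2} N^{-1}$ for $k\ge 2$, so $|\mathbb E (X^W)^3 - \mathbb E (X^V)^3| \lesssim q/N$ and analogously $\lesssim q^2/N$ for $k=4$; the remainder is controlled by $q^5$ times $N$ Green function factors, hence negligible under $q\le N^{-1/3-c}$.

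The core estimate is to show that each derivative contribution summed over $(i,\mu)$ is of size $o(N^{-\delta'})$. For this I would use the linearization crucially: because $H$ is linear in the $X_{i\mu}$, one has the clean rule $\partial_{X_{i\mu}} G = -G\, (\partial_{X_{i\mu}} H)\, G = -D_{ii} (G_{\cdot i} G_{\mu \cdot} + G_{\cdot \mu} G_{i \cdot})$, so $\partial_{X_{i\mu}}^k F$ is a polynomial (with bounded combinatorial coefficients) in entries of $G$ at $z = x+i\eta_\ast$ integrated over $x \in [E,E_0]$, times derivatives of $K$. Applying the entrywise local law (\ref{DIAGONAL}) gives $|G_{ab}| \le C$ for $a=b$ and $|G_{ab}| \le \varphi^{C_1}(q + \sqrt{\operatorname{Im} m_{2c}/(N\eta)} + 1/(N\eta))$ for $a\neq b$, and at $\eta = \eta_\ast$ near the edge this off-diagonal bound is $O(N^{-1/3+\epsilon})$. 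Counting the mandatory number of off-diagonal $G_{i\mu}$ factors in each derivative, the contribution of the $k$-th order term is bounded by
\[
MN \cdot q^{k-2} N^{-1} \cdot \Psi^{k-r} \cdot N^{C\epsilon_0},
\]
where $\Psi$ is the off-diagonal size and $r$ is the number of diagonal entries that the combinatorics forces; carefully accounting for the gain from derivatives of $K$ (which add a factor of $N^{-2/3+C\epsilon_0}$ per derivative via $\|K'\|_\infty$ and the $\eta_\ast$ scaling) closes the bound, provided $\phi > 1/3$ so that $q^3 \cdot M \cdot N^{C\epsilon_0} = o(N^{-\delta})$.

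The main obstacle, and the only truly delicate point, is tracking the combinatorial structure of the $k$-th derivatives of $F$ with enough precision to extract the necessary powers of $\Psi$ (the off-diagonal local-law bound) in each monomial. A brute-force bound $|G_{ab}| \le C$ for all entries loses the essential small factor $\Psi \sim N^{-1/3+\epsilon}$ coming from the off-diagonal entries, and then the estimate fails precisely at the threshold $\phi = 1/3$. One must therefore verify, by expanding $\partial_{X_{i\mu}}^k F$ into polynomials in $G$-entries and exploiting that each derivative in $X_{i\mu}$ produces at least one off-diagonal $G_{i\mu}$-type factor (a standard but careful combinatorial check), that sufficiently many off-diagonal factors survive. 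Once this is done, summing over $\gamma = 1,\dots, MN$ and combining with the Helffer--Sj\"ostrand step and the rigidity-based reduction yields (\ref{EDDDD}). This argument is very close in spirit to the Green function comparison for Wigner ensembles in \cite{LY} and for covariance matrices in \cite{PY2,PY,BPZ1,LS}, with the linearization $H$ playing the role that makes the Taylor expansion with respect to matrix entries tractable despite the nonlinearity of $\mathcal Q_2$ in $X$.
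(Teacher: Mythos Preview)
Your proposal is correct and follows essentially the same route the paper indicates: the paper does not spell out a proof of Lemma \ref{lem_smallcomp} but states that, given the local law and rigidity in Lemma \ref{lem_small}, it ``follows from a routine application of the Green function comparison theorem'' and refers to \cite[Section 4]{BPZ1} and \cite[Section 10]{KY2}. Your sketch is precisely that routine application, carried out through the linearization $H$ and its resolvent $G$ just as the paper sets up in Definition \ref{def_linearHG}.

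One small correction: in the reduction step you invoke ``a level repulsion estimate near the edge coming from Lemma \ref{lem_small}'' to control $\mathbb P(\lambda_1\in[E-\ell,E+\ell])$. Lemma \ref{lem_small} contains no level repulsion statement, and none is needed. The sandwich inequality you want is exactly the content of Lemma \ref{lem_cutoff2} (see also \cite[Corollary 4.2]{PY}, \cite[Corollary 6.2]{EYY}), whose proof uses only the rigidity (\ref{SEC}) and the comparison between the counting function and its $\theta_\eta$-smoothing; no information on eigenvalue gaps is required. With that replacement your argument goes through as written.
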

\begin{rem} \label{rigid_multi}
As in \cite{EKYY,EYY,LY}, Lemma \ref{lem_smallcomp}, as well as Theorem \ref{lem_comparison} below, can be can be generalized to finite correlation functions of the $k$ largest eigenvalues for any fixed $k$:
\begin{align}
& \mathbb{P}^V \left( \left(N^{{2}/{3}}(\lambda_i-\lambda_{r}) \leq s_i -N^{-\epsilon}\right)_{1\le i \le k}\right)-N^{-\delta} \le \mathbb{P}^W \left( \left(N^{{2}/{3}}(\lambda_i-\lambda_{r}) \leq s_i \right)_{1\le i \le k}\right) \nonumber\\
& \leq \mathbb{P}^V \left( \left(N^{{2}/{3}}(\lambda_i-\lambda_{r}) \leq s_i + N^{-\epsilon}\right)_{1\le i \le k}\right)+ N^{-\delta} \label{EDDDD_ext}
\end{align}
for sufficiently large $N$. The proof of (\ref{EDDDD_ext}) is similar to that of (\ref{EDDDD}) except that it uses a general form of the Green function comparison theorem; see e.g. \cite[Theorem 6.4]{EYY}. As a corollary, we can then prove the stronger universality result (\ref{SUFFICIENT2}).
\end{rem}

In fact under different assumptions, Lemma \ref{lem_small} has been proved previously in \cite{BPZ1,KY2} in slightly different forms. For completeness, we will give a brief proof for it in Appendix \ref{appendix1} under our assumptions. Then with Lemma \ref{lem_small}, Lemma \ref{lem_smallcomp} follows from a routine application of the Green function comparison theorem; we refer the reader to Section 4 of \cite{BPZ1} and Section 10 of \cite{KY2}.


For any matrix $X$ satisfying Assumption \ref{assm_big1} and the tail condition (\ref{INTRODUCTIONEQ}), we can construct a matrix $X_1$ that approximates $X$ with high probability and satisfies Assumption \ref{assm_big1}, the bounded support condition (\ref{eq_support}) with $q\le N^{-\phi}$ for some small $0<\phi<1/3$, and
\begin{equation}\label{conditionA3}
\mathbb{E}\vert x_{ij} \vert^3 \leq B N^{-{3}/{2}}, \   \mathbb{E} \vert x_{ij} \vert^4  \leq B (\log N)N^{-2}.
\end{equation}
for some constant $B>0$ (see Section \ref{sec_cutoff}, proof of the sufficient condition). We will need the following improved local deformed MP law and eigenvalues rigidity result for matrices with large support and satisfying condition (\ref{conditionA3}).



\begin{thm}[Rigidity of eigenvalues: large support case] \label{thm_largerigidity}
Suppose $X$ satisfies Assumption \ref{assm_big1}, the bounded support condition (\ref{eq_support}) with $q\le N^{-\phi}$ for some constant $\phi>0$, and condition (\ref{conditionA3}). Fix the constants $c_1$, $C_0$, $C_1$, $A$ and $\xi_1$ as given in Lemma \ref{lem_small}. Then there exists constant $C_2>0$, depending only on $c_1$, $B$ and $\phi$, such that with high probability we have
\begin{equation}
\max_{z\in S(c_1, C_0, C_2)} \vert m_2(z)-m_{2c}(z) \vert \leq \frac{\varphi^{C_2}}{N \eta}, \label{NEWMPBOUNDS}
\end{equation}
for sufficiently large $N$. Moreover, (\ref{NEWMPBOUNDS}) implies that with high probability the following rigidity results hold for some $C_\xi>0$:
\begin{equation}\label{P1P1P1P1P1}
\bigcap_{j: \lambda_r - c_1 \le \gamma_j \le \lambda_r}\left\{ \vert \lambda_j - \gamma_j \vert \leq \varphi^{C_\xi}  j^{-1/3}N^{-2/3} \right\}
\end{equation}
and
\begin{equation}
\sup_{ \lambda_r - c_1 \le E \leq C_0 \lambda_r} \vert n(E)-n_{c}(E) \vert \leq \frac{\varphi^{C_\xi}}{N},  \label{PPPPPP}
\end{equation}
where
$$n(E):=\frac{1}{N} \# \{ \lambda_j \ge E\}, \ \ n_{c}(E):=\int^{+\infty}_E \rho_{2c}(x)dx.$$
\end{thm}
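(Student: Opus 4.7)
\textbf{Proof proposal for Theorem \ref{thm_largerigidity}.} My plan is to bootstrap from the weaker estimate provided by Lemma \ref{lem_small} to the optimal bound (\ref{NEWMPBOUNDS}) via a fluctuation averaging argument that exploits the moment bounds (\ref{conditionA3}), and then deduce the rigidity statements (\ref{P1P1P1P1P1}) and (\ref{PPPPPP}) by a Helffer--Sjöstrand type argument. Throughout, I work with the linearized block matrix $G(X,z)$ of Definition \ref{def_linearHG} so that Schur complement expansions are available simultaneously for the $\mathcal I_1$-entries and the $\mathcal I_2$-entries.

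The first step is to write down the perturbed self-consistent equation for $m_2(z)$. Using the Schur complement formula on $G$, I expand $G_{\mu\mu}^{-1}$ and $G_{ii}^{-1}$ in terms of the minors $G^{(\mu)}$ and $G^{(i)}$. The key identities take the schematic form
\begin{equation*}
\frac{1}{G_{\mu\mu}} = -z - \sum_{i,j\in\mathcal I_1} X_{i\mu}\overline{X_{j\mu}}\, \sigma_i^{1/2}\sigma_j^{1/2} G^{(\mu)}_{ij}, \qquad \frac{1}{G_{ii}} = -1 - \sigma_i \sum_{\mu,\nu\in\mathcal I_2} X_{i\mu}\overline{X_{i\nu}}\, G^{(i)}_{\mu\nu}.
\end{equation*}
Separating the diagonal contributions from the off-diagonal ones and using the bounded support condition together with (\ref{conditionA3}) yields the Schur-type expansion $G_{\mu\mu}^{-1} = -z + d^{-1}\sum_i \sigma_i G^{(\mu)}_{ii}/N \cdot(\text{something}) + Z_\mu$, where the individual fluctuations $Z_\mu$ and $Z_i$ can be controlled by standard large deviation bounds (as in Lemma \ref{lem_small}) by $\varphi^{C}\left(q\sqrt{\Im m_{2c}/(N\eta)} + 1/(N\eta)\right)$. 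Averaging over $\mu$ (and $i$) and substituting back gives a relation of the form $z + 1/m_2(z) - d^{-1}\int x/(1+m_2(z)x)\pi_N(dx) = [Z]$, where $[Z]$ is a carefully defined average of the $Z$ variables.

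The core of the argument is to show that $|[Z]| \le \varphi^{C_2}/(N\eta)$ with high probability; this is the fluctuation averaging step, and it is the main obstacle. Pointwise one only has $|Z_\mu| \lesssim q\sqrt{\Im m/(N\eta)}$, which is much larger than the target $1/(N\eta)$. The gain comes from computing high moments $\mathbb E |[Z]|^{2p}$: when one expands the product, each term factors as a product of centered quantities of the form $|X_{i\mu}|^2 - N^{-1}$ or $\overline{X_{i\mu}}X_{i\nu}$ paired against entries of the $G^{(T)}$'s; crucially, indices that are not paired contribute factors that are controlled using the third and fourth moment bounds in (\ref{conditionA3}) rather than by the crude support $q$. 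After a careful counting of the resulting graphs (as in Section 5 of Erdős--Yau--Yin and the corresponding sample covariance analogues of Bloemendal--Pillai--Zeitouni, Lee--Schnelli, and Knowles--Yin), one obtains $\mathbb E|[Z]|^{2p} \le (\varphi^{C}/(N\eta))^{2p}$, which upgrades to the desired high-probability bound by Markov's inequality. The stability of the self-consistent equation (\ref{deformed_MP21}), which is guaranteed away from the edges by the square-root behavior of Lemma \ref{lem_mbehavior} and the gap Assumption \ref{assm_big2}, then converts $|[Z]|\le \varphi^{C_2}/(N\eta)$ into $|m_2(z)-m_{2c}(z)| \le \varphi^{C_2}/(N\eta)$, establishing (\ref{NEWMPBOUNDS}).

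Finally, the rigidity consequences (\ref{P1P1P1P1P1}) and (\ref{PPPPPP}) follow from (\ref{NEWMPBOUNDS}) by a standard argument: integrating the bound $\Im\,[m_2(z)-m_{2c}(z)] = O(\varphi^{C_2}/(N\eta))$ against a smooth test function approximating the indicator $\mathbf 1_{[E,+\infty)}$ via the Helffer--Sjöstrand formula yields (\ref{PPPPPP}) with $n(E)-n_c(E) = O(\varphi^{C_\xi}/N)$ uniformly for $\lambda_r-c_1 \le E \le C_0\lambda_r$; combined with the square-root density from (\ref{SQUAREROOT}), inverting the counting function gives the eigenvalue location bound (\ref{P1P1P1P1P1}). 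The bound (\ref{boundH}) from Lemma \ref{lem_small} ensures there are no outlier eigenvalues beyond $C_0\lambda_r$ that would spoil the conversion. The main technical obstacle, as indicated, is the fluctuation averaging estimate on $[Z]$, since it is the only place where the improvement from $q$-dependence to $1/(N\eta)$-dependence is achieved and where the moment hypotheses (\ref{conditionA3}) enter in an essential way.
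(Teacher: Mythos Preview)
Your route and the paper's route are genuinely different. The paper does \emph{not} attempt a direct fluctuation averaging on $[Z]$ for the large-support matrix $X$. Instead it invokes Lemma~\ref{lem_decrease} to build a companion matrix $\tilde X$ with support $q=O(N^{-1/2}\log N)$ and the \emph{same first four moments} as $X$, applies the small-support local law (Lemma~\ref{lem_small}) to $\tilde X$ to get $|\tilde m_2-m_{2c}|\le \varphi^{C}/ (N\eta)$ and the corresponding off-diagonal bound, and then transfers these estimates to $X$ via the Green function comparison Lemma~\ref{lem_comgreenfunction} (proved in Section~\ref{comparison} by a Lindeberg swap on the linearized resolvent). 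A high-moment Markov inequality then yields (\ref{NEWMPBOUNDS}); the rigidity statements follow from (\ref{NEWMPBOUNDS}) by the standard Helffer--Sj\"ostrand argument, as you also indicate.

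Your proposal has a real gap at the fluctuation averaging step. The references you cite prove bounds of the form $|[Z]|\le \varphi^{C}\bigl(q^2+(N\eta)^{-2}+(\Im m_{2c}+\Lambda)/(N\eta)\bigr)$; the $q^2$ is not an artifact but comes from terms that cannot be improved by (\ref{conditionA3}) alone. Concretely, the off-diagonal entries $G_{i\mu}$ contain the single-entry piece $-\sqrt{\sigma_i}\,x_{i\mu}$ from (\ref{resolvent6}); this is of order $q$ pointwise and no moment hypothesis can beat it for one fixed $(i,\mu)$. When you iterate resolvent identities inside the moment expansion of $[Z]^{2p}$, these single-entry pieces reappear through $\Lambda_o$-type errors, feeding a $q^2$ back into the bound. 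Getting rid of that term would require a substantially sharper graphical expansion that systematically averages over the \emph{locations} of these single entries---this is neither in the cited works nor sketched in your proposal. The paper's comparison strategy is designed precisely to avoid this difficulty: the moment matching of Lemma~\ref{lem_decrease} kills the first four orders of the Lindeberg expansion, and the remaining orders $\ge 5$ are small because they carry at least $N^{-2}$ from two moments times an extra $N^{-\phi}$ from the support, summable over the $O(N^2)$ swap steps. If you want to salvage your direct approach you would need to prove a new fluctuation-averaging lemma under (\ref{conditionA3}) rather than invoke existing ones; otherwise, switch to the comparison route.
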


\begin{thm}\label{lem_comparison}
Let $X^W$ and $X^V$ be any two {\it{i.i.d.}} sample covariance matrices satisfying the assumptions in Theorem \ref{thm_largerigidity}. Then there exist constants $\epsilon,\delta >0$ such that, for any $s\in \mathbb R$, we have
\begin{equation} 
\mathbb{P}^V (N^{\frac{2}{3}}(\lambda_1-\lambda_{r}) \leq s-N^{-\epsilon})-N^{-\delta} \leq
\mathbb{P}^W(N^{\frac{2}{3}}(\lambda_1-\lambda_{r})\leq s) \leq \mathbb{P}^V(N^{\frac{2}{3}}(\lambda_1-\lambda_{r}) \leq s+N^{-\epsilon})+N^{-\delta}, 
\end{equation}
where $\mathbb{P}^V$ and $\mathbb{P}^W$ denote the laws of $X^V$ and $X^W$, respectively.
\end{thm}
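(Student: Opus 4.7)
The plan is to prove Theorem \ref{lem_comparison} by the Green function comparison method, in the style of \cite{EYY} as adapted to sample covariance matrices in \cite{PY,PY2,BPZ1,KY2}. The central trick is to work with the self-adjoint linearization $H(X)$ from Definition \ref{def_linearHG}: because $H$ depends linearly on the entries of $X$, replacing a single entry produces a clean resolvent expansion of the Green function $G$, which sidesteps the nonlinearity of the map $X \mapsto DXX^*D^*$ that was flagged in the introduction as the key technical difficulty.

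The first step is to reduce the edge distribution to a smooth functional of $m_2$. Fix a small $\epsilon' > 0$, set $\eta = N^{-2/3-\epsilon'}$, and choose an auxiliary $E_2$ just to the right of $\lambda_r$ with $E_2 - \lambda_r = N^{-2/3+\epsilon'}$. By a now-standard argument (see e.g.\ \cite{EYY} or Section 4 of \cite{BPZ1}), for any smooth monotone cutoff $K$,
\begin{equation*}
\mathbb{P}\bigl(N^{2/3}(\lambda_1 - \lambda_r) \le s\bigr) = \mathbb{E}\, K\!\left( N \int_{\lambda_r + sN^{-2/3}}^{E_2} \Im\, m_2(x + i\eta)\, dx \right) + O(N^{-\delta}),
\end{equation*}
where the $O(N^{-\delta})$ error is absorbed by the rigidity estimate (\ref{P1P1P1P1P1}) and the local law (\ref{NEWMPBOUNDS}) of Theorem \ref{thm_largerigidity}. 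It therefore suffices to bound $|\mathbb{E}^W \Phi - \mathbb{E}^V \Phi|$ for the smooth functional $\Phi = K(N \int \Im\, m_2)$.

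Next I would order the $MN$ entries of $X$ arbitrarily and run a Lindeberg-type replacement: define an interpolating family $X^{(\gamma)}$ that agrees with $X^V$ on the first $\gamma$ entries and with $X^W$ on the rest, so that $X^{(0)} = X^W$ and $X^{(MN)} = X^V$. At each step $\gamma \to \gamma+1$, I would Taylor expand $\Phi$ in the single swapped entry; since $H$ is linear in this entry, the expansion follows directly from the resolvent identity, and its coefficients are polynomials in Green function entries of an auxiliary matrix in which the chosen entry has been set to zero, all of which satisfy the a priori bounds from (\ref{NEWMPBOUNDS}), (\ref{DIAGONAL}), and the delocalization in Lemma \ref{lem_small}. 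The first two orders cancel by the matched mean and variance. The third and fourth orders are absorbed by the moment bound (\ref{conditionA3}), producing factors $B N^{-3/2}$ and $B(\log N) N^{-2}$ respectively times Green function polynomials, while tails of order $k \ge 5$ are bounded using the support condition $q \le N^{-\phi}$. Summing over the $O(N^2)$ swaps should yield $|\mathbb{E}^W \Phi - \mathbb{E}^V \Phi| = O(N^{-\epsilon})$.

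The main obstacle is to push the per-swap bound to the required sharpness in the large-support regime. When $q = N^{-\phi}$ with $\phi$ only slightly positive, a crude application of (\ref{DIAGONAL}) leaves the fourth-order and high-order remainder terms marginal, and one needs to exploit the fact that $\Im\, m_2$ near the edge is only of order $N^{-1/3}$ (by (\ref{SQUAREROOTBEHAVIOR}) and (\ref{NEWMPBOUNDS})) to buy the additional $N^{-\phi'}$ decay per factor of $q$ needed to sum the tail of the expansion. Handling this cleanly requires a fluctuation-averaging estimate for the linearized Green function $G$, adapted from \cite{BPZ1,KY2} to the present setup, and the careful tracking of the $\log N$ factor coming from the fourth-moment bound in (\ref{conditionA3}). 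This is where the bulk of the technical work should sit; the extension to finite correlation functions of the top $k$ eigenvalues (Remark \ref{finite_correlation}) then follows by replacing the scalar cutoff $K$ with a smooth function of the $k$ corresponding integrals, exactly as in \cite{EYY}.
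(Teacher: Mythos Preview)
Your overall architecture --- reduce the edge distribution to a smooth functional of $m_2$ via rigidity, then run a Lindeberg swap on the linearized matrix $H$ --- matches the paper's. But there is a genuine gap in how you handle the third-order term, and it is exactly the reason the paper does \emph{not} compare $X^W$ and $X^V$ directly.

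You write that ``the third and fourth orders are absorbed by the moment bound (\ref{conditionA3}).'' Quantitatively this does not close. At the edge with $\eta=N^{-2/3-\epsilon}$, the coefficient of $x_{i\mu}^3$ in the expansion of your functional is, by the analogue of (\ref{productbound}), only $O(N^{-1/3+C\epsilon})$. Since the third moments of $X^W$ and $X^V$ are not assumed to match (only mean and variance do), the third-order difference per swap is of size $N^{-3/2}\cdot N^{-1/3+C\epsilon}$, and summing over $O(N^2)$ swaps gives $N^{1/6+C\epsilon}$, which diverges. The usual cure --- replace the diagonal Green-function entries in the third-order coefficient by their deterministic limits $\Pi_{aa}$ so that the leading part cancels between $W$ and $V$ --- only buys you the entrywise error from (\ref{DIAGONAL}), which in the large-support regime is $O(q)=O(N^{-\phi})$. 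For small $\phi$ this is far too weak: you end up with $N^{1/6-\phi+C\epsilon}$, still divergent once $\phi<1/6$. The fluctuation-averaging and $\Im\, m_2 \sim N^{-1/3}$ heuristics you invoke control the averaged law and the off-diagonal $L^2$ bound (Lemma \ref{thm_largebound}), but they do not improve the pointwise diagonal fluctuation that enters the third-order coefficient.

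The paper circumvents this entirely. Given $X^W$ (large support), it invokes Lemma \ref{lem_decrease} to produce a companion $\tilde X^W$ with support $O(N^{-1/2}\log N)$ whose first \emph{four} moments match those of $X^W$, and similarly $\tilde X^V$ for $X^V$. The Green function comparison (Lemma \ref{lem_compdiffsupport}) is then run between $X^W$ and $\tilde X^W$: because four moments match, the first nonvanishing term in the swap is order five, where the moment is bounded by $q\cdot \mathbb{E}|x|^4 \le (\log N)N^{-2-\phi}$, and summing over $N^2$ swaps gives $N^{-\phi+C\epsilon}$. The two small-support companions $\tilde X^W$ and $\tilde X^V$ are then compared by the already-established small-support edge universality (Lemma \ref{lem_smallcomp}). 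The four-moment detour is the missing idea in your plan; without it, the third-order term is the obstruction.
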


\begin{lem}[Bounds on $G_{ij}$: large support case] \label{thm_largebound}
Let $X$ be a matrix satisfying the assumptions in Theorem \ref{thm_largerigidity}. Then for any $0<c<1$ and $z\in S(c_1,C_0,C_2)\cap \{z= E+i\eta: \eta \geq N^{-1+c}\}$,  we have the following weak bound
\begin{equation}\label{weak_off}
\mathbb{E} \vert G_{ab}(z) \vert^2 \leq \varphi^{C_3}\left(\frac{\operatorname{Im} m_{2c}(z)}{N  \eta}+\frac{1}{(N \eta)^2}\right), \ a \neq b,
\end{equation}
for some large enough constant $C_3>0$.
\end{lem}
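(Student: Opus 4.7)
The plan combines a Ward-type identity for $G$, the exchangeability of the columns of $X$, and the improved averaged local law of Theorem~\ref{thm_largerigidity}. First I would derive the identity
\[
\sum_{\nu\in\mathcal I_2}|G_{a\nu}(z)|^2 \;=\; \frac{\operatorname{Im} G_{aa}(z)}{\eta}, \qquad a\in\mathcal I,
\]
which follows from $G-G^*=2i\eta\,GP_{\mathcal I_2}G^*$ (itself a consequence of $G^{-1}-(G^{-1})^*=-2i\eta P_{\mathcal I_2}$, with $P_{\mathcal I_2}$ the orthogonal projection onto $\mathcal I_2$). Because the columns of $X$ are i.i.d., the joint law of $G$ is invariant under permutations of the indices in $\mathcal I_2$; in particular, for fixed $a\in\mathcal I$ the quantity $\mathbb E|G_{a\nu}|^2$ does not depend on the choice of $\nu\in\mathcal I_2\setminus\{a\}$. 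Averaging and dividing then gives
\[
\mathbb E|G_{a\nu}|^2 \;\le\; \frac{\mathbb E\operatorname{Im} G_{aa}}{(N-\mathbf{1}_{a\in\mathcal I_2})\,\eta}, \qquad \nu\in\mathcal I_2\setminus\{a\},
\]
so it remains to show $\mathbb E\operatorname{Im} G_{aa}\le C\operatorname{Im} m_{2c}+O(\varphi^{C}/(N\eta))$.

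For $a\in\mathcal I_2$ this is immediate: exchangeability gives $\mathbb E G_{aa}=\mathbb E m_2$, and Theorem~\ref{thm_largerigidity}, together with the deterministic bound $|G_{aa}|\le\eta^{-1}$ applied on the exceptional set of probability $e^{-\varphi^{c}}$, yields $\mathbb E\operatorname{Im} G_{aa}=\operatorname{Im} m_{2c}+O(\varphi^{C_2}/(N\eta))$. For $a\in\mathcal I_1$ I would instead use the Schur-complement identity
\[
G_{aa} \;=\; -\bigl(1+\sigma_a\,X_{a,\cdot}^T \mathcal G_2^{(a)} X_{a,\cdot}\bigr)^{-1},
\]
where $\mathcal G_2^{(a)}$ is the minor obtained by deleting row $a$ of $X$, together with the concentration of the quadratic form around $\sigma_a m_2^{(a)}$ and the rank-one perturbation bound $|m_2^{(a)}-m_2|\le C/(N\eta)$. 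Assumption~\ref{assm_big2} ensures that $\Pi_{aa}=-(1+\sigma_a m_{2c})^{-1}$ is bounded and has $\operatorname{Im}\Pi_{aa}\sim\operatorname{Im} m_{2c}$ on our spectral domain, which closes the estimate. Together these arguments cover all pairs $a\in\mathcal I$, $b\in\mathcal I_2$, $a\ne b$.

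The most delicate case is $i\ne j\in\mathcal I_1$, for which neither the Ward identity nor the column-exchangeability applies directly. From the resolvent identity $(G^{-1}G)_{ij}=\delta_{ij}$ one obtains
\[
G_{ij} \;=\; \sigma_i^{1/2}\sigma_j^{1/2}\,\bigl\langle X_{i,\cdot},\, \mathcal G_2\, X_{j,\cdot}\bigr\rangle.
\]
I would then extract the minor $\mathcal G_2^{(ij)}$ (which is independent of both $X_{i,\cdot}$ and $X_{j,\cdot}$) via a two-step Sherman--Morrison--Woodbury expansion, and bound the leading bilinear form by the elementary identity $\mathbb E|X_{i,\cdot}^T A X_{j,\cdot}|^2=\|A\|_{HS}^2/N^2$ for deterministic $A$, applied conditionally on $\mathcal G_2^{(ij)}$. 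Since $\mathbb E\|\mathcal G_2^{(ij)}\|_{HS}^2=\eta^{-1}\mathbb E\operatorname{Im}\operatorname{tr}\mathcal G_2^{(ij)}\lesssim N\operatorname{Im} m_{2c}/\eta$ (where the error is controlled via Theorem~\ref{thm_largerigidity} and a rank-two perturbation bound on $m_2^{(ij)}-m_2$), this produces the desired estimate $\mathbb E|G_{ij}|^2\lesssim\operatorname{Im} m_{2c}/(N\eta)+1/(N\eta)^2$.

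The main obstacle will be controlling the rank-two Sherman--Morrison correction in this final case. The two-by-two matrix $(I+U^T\mathcal G_2^{(ij)}U)^{-1}$ appearing in the expansion produces remainder terms whose naive size is only $O(1/\eta)$, which could absorb the $1/N$ improvement we need. The third- and fourth-moment conditions in (\ref{conditionA3}) are precisely what make the concentration of the bilinear forms in $X_{i,\cdot}$ and $X_{j,\cdot}$ sharp enough to avoid this, and carefully bookkeeping the sizes of these remainder terms, uniformly in $i,j$, is the technical heart of the argument.
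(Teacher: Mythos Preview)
Your approach is correct and genuinely different from the paper's. The paper proves Lemma~\ref{thm_largebound} by the Lindeberg comparison machinery of Section~\ref{comparison}: it compares $\mathbb{E}|G_{ab}|^2$ for the large-support matrix $X$ with $\mathbb{E}|\tilde G_{ab}|^2$ for the small-support matrix $\tilde X$ of Lemma~\ref{lem_decrease} (for which the bound follows from (\ref{DIAGONAL})), swapping entries one at a time and controlling the telescoping sum via the representation Lemma~\ref{Greenfunctionrepresent}. Your route bypasses all of this: Ward plus column exchangeability reduces the $b\in\mathcal I_2$ cases to a bound on $\mathbb{E}\operatorname{Im}G_{aa}$, and the $i\neq j\in\mathcal I_1$ case is handled by the bilinear form in independent rows. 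The price is that you must take Theorem~\ref{thm_largerigidity} as an input, whereas the paper's proof of Lemma~\ref{thm_largebound} is logically parallel to it; in exchange you get a much shorter and more transparent argument.

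Two places in your sketch need sharpening. First, for $a=i\in\mathcal I_1$, ``concentration of the quadratic form'' is not enough: the high-probability bound on $Z_i=\sigma_i(m_2^{(i)}-X_i^T\mathcal G_2^{(i)}X_i)$ carries an unavoidable $q=N^{-\phi}$ term, which for small $\phi$ is much larger than $\operatorname{Im}m_{2c}+\varphi^C/(N\eta)$. What actually works is an expectation argument: from the exact identity $-G_{ii}=1/w+Z_i/w^2+Z_i^2/(w^2(w-Z_i))$ with $w=1+\sigma_i m_2^{(i)}$, use $\mathbb{E}_iZ_i=0$ to kill the linear term, and bound the remainder by $C\,\mathbb{E}_i|Z_i|^2$ on the good event. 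The fourth-moment condition in (\ref{conditionA3}) then gives $\mathbb{E}_i|Z_i|^2\lesssim \operatorname{Im}m_2^{(i)}/(N\eta)+(\log N)/N$, which after applying Theorem~\ref{thm_largerigidity} to $m_2^{(i)}$ yields $\mathbb{E}\operatorname{Im}G_{ii}\lesssim \operatorname{Im}m_{2c}+\varphi^C/(N\eta)$ as needed.

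Second, the Sherman--Morrison obstacle you anticipate in the $i\neq j\in\mathcal I_1$ case does not arise. Using the resolvent identity (\ref{resolvent3}) directly gives $G_{ij}=G_{ii}G_{jj}^{(i)}\sqrt{\sigma_i\sigma_j}\,X_i^T\mathcal G_2^{(ij)}X_j$, where the minor is already present and the prefactor $G_{ii}G_{jj}^{(i)}$ is $O(1)$ on the good event by (\ref{DIAGONAL}) --- not $O(1/\eta)$. Since $X_i,X_j$ are independent of $\mathcal G_2^{(ij)}$ and of each other, conditioning gives $\mathbb{E}_{ij}|X_i^T\mathcal G_2^{(ij)}X_j|^2=N^{-2}\|\mathcal G_2^{(ij)}\|_{HS}^2=\operatorname{Im}m_2^{(ij)}/(N\eta)$, and Theorem~\ref{thm_largerigidity} plus the rank-two bound (\ref{m_T}) close the estimate with no further work.
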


In proving Theorem \ref{thm_largerigidity} and Lemma \ref{thm_largebound}, we will make use of the results for small support matrices in Lemma \ref{lem_small} and Lemma \ref{thm_largebound}. In fact, given any matrix $X$ satisfying the assumptions in Theorem \ref{thm_largerigidity}, we can construct a matrix $\tilde X$ having the same first four moments as $X$ but with small support $q=\mathcal O(N^{-1/2}\log N)$.

\begin{lem} [Lemma 5.1 in \cite{LY}]\label{lem_decrease}
Suppose $X=(x_{ij})$ satisfies the assumptions in Theorem \ref{thm_largerigidity}. Then there exists another matrix $\tilde{X}=(\tilde x_{ij})$, such that $\tilde{X}$ satisfies the bounded support condition (\ref{eq_support}) with $q=\mathcal O(N^{-1/2}\log N)$, and the first four moments of the entries of $X$ and $\tilde{X}$ match, i.e.
\begin{equation}\label{match_moments}
\mathbb Ex_{ij}^k =\mathbb E\tilde x_{ij}^4, \ \ k=1,2,3,4.
\end{equation}
\end{lem}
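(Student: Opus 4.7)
The plan is to construct $\tilde{x}_{ij}$ as i.i.d.\ copies of a bounded discrete random variable whose first four moments coincide with those of $x_{ij}$. Rescale by setting $\xi:=\sqrt{N}\,x_{ij}$: the hypotheses give $\mathbb{E}\xi=0$, $\mathbb{E}\xi^2=1$, $\mathbb{E}|\xi|^3\le B$, and $\mathbb{E}\xi^4\le B\log N$. It then suffices to build $\tilde\xi$ with $|\tilde\xi|\le L:=c\log N$ almost surely and $\mathbb{E}\tilde\xi^k=\mathbb{E}\xi^k$ for $k=1,\dots,4$, after which $\tilde x_{ij}:=N^{-1/2}\tilde\xi$ satisfies~\eqref{eq_support} deterministically with $q=cN^{-1/2}\log N$.

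The construction is truncation-plus-correction. Set $\hat\xi:=\xi\,\mathbf{1}_{\{|\xi|\le L\}}$; this is a genuine random variable on $[-L,L]$ with $\mathrm{Var}(\hat\xi)\to 1$. Using $|\xi|^k\le L^{k-4}|\xi|^4$ on $\{|\xi|>L\}$, the moment defects $\delta_k:=\mathbb{E}\xi^k-\mathbb{E}\hat\xi^k$ satisfy $|\delta_k|\le BL^{k-4}\log N$ for $k=1,\dots,4$. Introduce an independent Bernoulli$(\alpha)$ indicator $\mathbf{1}_E$ with $\alpha:=(\log N)^{-1/2}$ and an auxiliary discrete variable $Z$ (to be constructed) supported on at most five atoms of $[-L,L]$, and define $\tilde\xi:=(1-\mathbf{1}_E)\hat\xi+\mathbf{1}_E\,Z$. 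The moment-matching conditions reduce to
\begin{equation*}
\mathbb{E}Z^k=\mu_k:=\mathbb{E}\hat\xi^k+\alpha^{-1}\delta_k,\qquad k=1,\dots,4,
\end{equation*}
with $|\mu_k-\mathbb{E}\hat\xi^k|\le B(\log N)^{1/2}L^{k-4}\log N$, which is vanishingly small compared with the natural scale $L^k$ of a distribution on $[-L,L]$.

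Existence of such a $Z$ is a truncated Hausdorff moment problem: a quadruple $(\mu_1,\dots,\mu_4)$ is the moment sequence of some probability measure on $[-L,L]$, supported on at most five atoms by Carath\'eodory, precisely when the two standard Hankel matrices
\begin{equation*}
\begin{pmatrix}1&\mu_1\\ \mu_1&\mu_2\end{pmatrix},\qquad \begin{pmatrix}1&\mu_1&\mu_2\\ \mu_1&\mu_2&\mu_3\\ \mu_2&\mu_3&\mu_4\end{pmatrix}
\end{equation*}
together with their $[-L,L]$-boundary versions are positive semidefinite. These positivity conditions hold strictly for the legitimate moment sequence $\mathbb{E}\hat\xi^k$ and, being open, persist under the $o(1)$ perturbation above, producing the desired $Z$. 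The main obstacle is precisely this realizability step: one must verify that after the $\alpha^{-1}$-amplification of the truncation errors, the quadruple still sits in the interior of the moment cone of $[-L,L]$. The fourth-moment bound $\mathbb{E}\xi^4=O(\log N)$ is used essentially to balance $L\sim\log N$ against $\alpha\sim(\log N)^{-1/2}$; without it, $\delta_4$ could be of the same order as $\mathbb{E}\hat\xi^4$ itself, and the Hankel positivity would fail.
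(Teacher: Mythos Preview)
The paper does not supply its own proof of this lemma; it simply cites \cite[Lemma~5.1]{LY}. So let me assess your argument on its own merits and compare it to the direct route.

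Your truncation-plus-mixture scheme has a genuine gap at the last step. You claim that the Hankel positivity conditions ``hold strictly for the legitimate moment sequence $\mathbb E\hat\xi^k$ and, being open, persist under the $o(1)$ perturbation''. But the perturbation $\alpha^{-1}\delta_k$ is \emph{not} $o(1)$ for $k=3,4$: with $\alpha=(\log N)^{-1/2}$ and $|\delta_4|\le B\log N$ you only get $\alpha^{-1}\delta_4=O((\log N)^{3/2})$, which can dwarf $\mathbb E\hat\xi^4=O(\log N)$. Rescaling by $L^k$ does not rescue the open-set argument either: the rescaled unperturbed moments $\nu_k^{(0)}=\mathbb E\hat\xi^k/L^k$ all tend to $0$, i.e.\ to the boundary point $\delta_0$ of the moment body of $[-1,1]$, so there is no uniform interior margin. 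Concretely, the $2\times2$ Hankel minor $\nu_2\nu_4-\nu_3^2$ can be as small as $O((\log N)^{-6})$ for $\hat\xi$, while the perturbation you introduce to it is of order $(\log N)^{-9/2}$. The argument can be repaired by exploiting the signed structure $\delta_4\ge 0$ and $|\delta_k|\le L^{k-4}\delta_4$, which forces the increase in $\mu_4$ to dominate the possible damage to $\mu_3$; but this requires a computation, not an appeal to openness.

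More to the point, the whole truncation-plus-correction detour is unnecessary. The moments $(0,1,m_3,m_4)$ already come from the genuine random variable $\xi$, so the Hankel matrix
\[
H_2=\begin{pmatrix}1&0&1\\ 0&1&m_3\\ 1&m_3&m_4\end{pmatrix}
\]
is automatically positive semidefinite. The only remaining Hausdorff condition on $[-L,L]$ is the localization matrix
\[
\begin{pmatrix}L^2-1&-m_3\\ -m_3&L^2-m_4\end{pmatrix}\succeq 0,
\]
which holds as soon as $L^2\ge 2m_4$, since $m_3^2\le m_2 m_4=m_4$ by Cauchy--Schwarz. Thus $L=O(\sqrt{\log N})$ already suffices (a fortiori $L=O(\log N)$), and the truncated Hausdorff moment problem furnishes a measure on $[-L,L]$, supported on at most three atoms, with the prescribed first four moments. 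This is the clean route; your mixture construction only creates a moving target.
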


From Lemma \ref{lem_small}, we can get that Theorems \ref{thm_largerigidity} and \ref{thm_largebound} hold for $\tilde X$. Then due to (\ref{match_moments}), we expect that $X$ has ``similar properties" as $\tilde X$, so that Theorems \ref{thm_largerigidity} and \ref{thm_largebound} also hold for $X$. This will be proved with a Green function comparison method: we expand the Green functions with $X$ in terms of Green functions with $\tilde X$ using resolvent expansions and estimate the high order errors; see Section \ref{comparison} for more details.

\section{Proof of of the main result}\label{sec_cutoff}

In this section, we prove Theorem \ref{main_thm} with the results in Section \ref{sec_tools}. We begin by proving the necessary condition.

\begin{proof}[Proof of the Necessary condition]
Assume that $\lim_{s \rightarrow \infty } s^4 \mathbb{P}(\vert q_{11} \vert \geq s)\ne 0$. Then we can find a constant $0<c_0<{1}/{2}$ and a sequence $\{r_n\}$ such that $r_n \rightarrow \infty$ as $n \rightarrow \infty$ and
\begin{equation}
\mathbb{P}(\vert q_{ij} \vert \geq r_n) \geq c_0 r_n^{-4}. \label{KeyLemma}
\end{equation}
Fix any $s>\lambda_r$. We denote $L:=\left\lfloor \tau M \right\rfloor$, $I:=\sqrt{\tau^{-1}s}$ and define the event
$$\Gamma_N=\left\{ \text{There exist} \ i \text{ and } j, \ 1 \leq i  \le L, 1\le j \leq N, \  \text{such that} \ \vert x_{ij} \vert \geq I \right\} .$$ 

We first show that $\lambda_1(\mathcal Q_2) \ge s$ when $\Gamma_N$ holds. Suppose $\vert x_{ij} \vert \geq \sqrt{\tau^{-1}s}$ for some $1 \leq i  \le L$ and $1\le j \leq N$. Let $\mathbf u \in \mathbb{R}^N$ such that $\mathbf u(k)=\delta_{kj}$. By assumption (\ref{assm3}), we have $\sigma_i \ge \tau$ for $i \le L$.  Hence
\begin{equation*}
\lambda_1(\mathcal Q_2) \geq \langle \mathbf u, (TX)^{*} (TX) \mathbf u \rangle = \sum_{k=1}^M \sigma_k x_{kj}^2  \geq  \sigma_i x_{ij}^2 \ge \tau \left( \sqrt{\tau^{-1}s}\right)^2 = s.
\end{equation*}
Now we choose $N\in \left\{ \left\lceil ({r_n}/ I)^2 \right\rceil : n \in \mathbb N\right\}$.  With the choice $N=\left\lceil ({r_n}/ I)^2 \right\rceil$,  we have
\begin{align}
1-\mathbb{P}({\Gamma}_{N})& = \left(1-\mathbb{P}(\vert x_{ij} \vert \geq I) \right)^{NL} = \left(1-\mathbb{P}(\vert q_{ij} \vert \geq r_n)\right)^{NL}\nonumber \\
& \leq \left(1-c_0 r_n^{-4}\right)^{NL}  \leq (1-c_1N^{-2})^{c_2 N^2} ,\label{BOUND1}
\end{align}
for some constant $c_1 > 0$ depending on $c_0$ and $I$ and some constant $c_2$ depending on $\tau$ and $d$. Since $(1-c_1N^{-2})^{c_2 N^2} \le  c_3 $ for some constant $0< c_3 <1$ independent of $N$, the above inequality shows that $\mathbb{P}({\Gamma}_{N}) \ge 1- c_3 > 0$. 
This shows that $\limsup_{N \rightarrow \infty} \mathbb{P}(\Gamma_N)>0$ and concludes the proof.
\end{proof}


\begin{proof}[Proof of the Sufficient condition]
Given the matrix $X$ satisfying Assumption \ref{assm_big1} and the tail condition (\ref{INTRODUCTIONEQ}), we introduce a cutoff on its matrix entries at the level $N^{-\epsilon}$. For any fixed $\epsilon>0$, define
\begin{equation*}
\alpha_N:=\mathbb{P}\left(\vert q_{11} \vert > N^{{1}/{2}-\epsilon}\right), \ \ \beta_N:=\mathbb{E}\left[\mathbf{1}{\left(|q_{11}|> N^{{1}/{2}-\epsilon}\right)}q_{11} \right].
\end{equation*}
By (\ref{INTRODUCTIONEQ}) and integration by parts, we have that for any $\delta>0$ and large enough $N$,
\begin{equation}
\alpha_N \leq \delta N^{-2+4\epsilon}, \ \ \vert \beta_N \vert \leq \delta N^{-{3}/{2}+3 \epsilon} . \label{BBBOUNDS}
\end{equation}

Let $\rho(x)$ be the distribution density of $q_{11}$. Then we define independent random variables $q_{ij}^s$, $q_{ij}^l$, $c_{ij}$, $1\le i \le M$ and $1\le j \le N$ in the following ways:
\begin{itemize}
\item $q_{ij}^s$ has distribution density $\rho_s(x)$, where
\begin{equation}
\rho_s(x)= \mathbf{1}\left(\left| x-\frac{\beta_N}{1-\alpha_N}  \right| \leq N^{\frac{1}{2}-\epsilon} \right) \frac{\rho\left(x-\frac{\beta_N}{1-\alpha_N}\right)}{1-\alpha_N}; \label{DEF1}
\end{equation}
\item $q_{ij}^l$ has distribution density $\rho_l(x)$, where
\begin{equation}
\rho_l(x)= \mathbf{1}\left(\left| x-\frac{\beta_N}{1-\alpha_N}  \right| > N^{\frac{1}{2}-\epsilon} \right)\frac{\rho\left(x-\frac{\beta_N}{1-\alpha_N}\right)}{\alpha_N}; \label{DEF2}
\end{equation}
\item $c_{ij}$ is a Bernoulli 0-1 random variable with $\mathbb{P}(c_{ij}=1)=\alpha_N$ and $\mathbb{P}(c_{ij}=0)=1-\alpha_N$.
\end{itemize}
Let $X^s$, $X^l$ and $X^c$ be random matrices such that $X^s_{ij} = N^{-1/2}q_{ij}^s$, $X^l_{ij} = N^{-1/2}q_{ij}^l$ and $X^c_{ij} = c_{ij}$.
By (\ref{DEF1}), (\ref{DEF2}) and the fact that $X^c_{ij}$ is Bernoulli, it is easy to check that for independent $X^s$, $X^l$ and $X^c$,
\begin{equation}
X_{ij} \stackrel{d}{=} X^s_{ij}\left(1-X^c_{ij}\right)+X^l_{ij}X^c_{ij} - \frac{1}{\sqrt{N}}\frac{\beta_N}{1-\alpha_N}, \label{T3}
\end{equation}
where by (\ref{BBBOUNDS}), we have
\begin{equation*}
\left\vert \frac{1}{\sqrt{N}}\frac{\beta_N}{1-\alpha_N} \right\vert \leq 2\delta N^{-2+3\epsilon}.
\end{equation*}
Therefore, if we define the $M\times N$ matrix $Y=(Y_{ij})$ by 
$$Y_{ij}=\frac{1}{\sqrt{N}}\frac{\beta}{1-\alpha_N} \ \ \text{ for all }i\text{ and }j,$$
 we have $ \| Y \| \leq c N^{-1+3\epsilon} $ for some constant $c>0$ depending on $\delta$ and $d$. Using the bound (\ref{boundH}), it is easy to see that  
\begin{align}\label{const_err}
\left|\lambda\left((X+Y)^*D^* D(X+Y)\right) - \lambda\left(X^* D^* DX\right)\right| = O\left( N^{-1+3\epsilon} \right),
\end{align}
with $\xi_1$-high probability. Hence the deterministic part in (\ref{T3}) is negligible under the scaling $N^{2/3}$.

By (\ref{INTRODUCTIONEQ}) and integration by parts, it is easy to check that
\begin{equation}
\mathbb{E} q^{s}_{ij} =0, \ \ \mathbb{E}\vert q^s_{ij}\vert^2=1-O(N^{-1+2 \epsilon}), \ \ \mathbb{E}\vert q^s_{ij}\vert^3 = O(1), \  \ \mathbb{E}\vert q^s_{ij} \vert^4=O(\log N).\label{estimate_qs}
\end{equation}
We note that $X_1:=(\mathbb{E}\vert q^s_{ij} \vert^2)^{-{1}/{2}}X^s$ is a matrix that satisfies the assumptions for $X$ in Theorem \ref{lem_comparison}. 
Together with the estimate for $\mathbb{E}\vert q^s_{ij}\vert^2$ in (\ref{estimate_qs}), we conclude that there exists constants $\epsilon,\delta>0$ such that for any $s\in \mathbb R$,
\begin{equation}\label{univ_small}
\mathbb{P}^G (N^{\frac{2}{3}}(\lambda_1-\lambda_{r}) \leq s-N^{-\epsilon})-N^{-\delta} \leq
\mathbb{P}^s(N^{\frac{2}{3}}(\lambda_1-\lambda_{r})\leq s) \leq \mathbb{P}^G(N^{\frac{2}{3}}(\lambda_1-\lambda_{r}) \leq s+N^{-\epsilon})+N^{-\delta},
\end{equation}
where $\mathbb P^s$ is the law for $X^s$ and $\mathbb P^G$ is the law for a Gaussian covariance matrix. Now we write the first two terms on the right-hand side of (\ref{T3}) as
$$X^s_{ij}(1-X^c_{ij})+X^l_{ij}X^c_{ij} = X^s_{ij} + R_{ij} X^c_{ij},$$
where $ R_{ij}:=X^l_{ij}-X^s_{ij}.$ It remains to show that the effect of the $R_{ij} X^c_{ij}$ terms on $\lambda_1$ is negligible. We call the corresponding matrix as $R^c :=(R_{ij} X^c_{ij})$.

Note that $X^c_{ij}$ is independent of $X^s_{ij}$ and $R_{ij}$. We first introduce a cutoff on matrix $X^c$ as $\tilde X^c : =\mathbf{1}_{A}X^c$, where
\begin{equation*}
A:=\left\{\#\{(i,j):X^c_{ij}=1\}\leq N^{5\epsilon}\right\} \cap \left\{X^c_{ij}=X^c_{kl}=1  \Leftrightarrow \{i,j\}=\{k,l\} \ \text{or} \  \{i,j\} \cap \{k,l\}=\emptyset \right\}.
\end{equation*}
If we regard the matrix $X^c$ as a sequence $\mathbf X^c$ of $NM$ {\it{i.i.d.}} Bernoulli random variables, it is easy to obtain from the large deviation formula that
\begin{equation}\label{LDP_B}
\mathbb{P}\left(\sum_{i=1}^{MN} \mathbf X^c_i \leq N^{5 \epsilon}\right) \geq 1- \exp(- N^{\epsilon}),
\end{equation}
for sufficiently large $N$. Suppose the number $m$ of the nonzero elements in $X^c$ is given. Then it is easy to check that
\begin{equation}\label{LDP_C}
\mathbb P\left(\exists \, i = k, j\ne l \ \text{or} \  i\ne k, j =l \text{ such that } X^c_{ij}=X^c_{kl}=1 \left| \sum_{i=1}^{MN} \mathbf X^c_i = m \le N^{5\epsilon}\right. \right) \ge 1- O(m^2N^{-1}).
\end{equation}
Combining the estimates (\ref{LDP_B}) and (\ref{LDP_C}), we get that 
\begin{equation}\label{prob_A}
\mathbb P(A) \ge 1 -  O(N^{-1+10\epsilon}).
\end{equation}
On the other hand, by condition (\ref{INTRODUCTIONEQ}), we have
\begin{equation}\label{prob_R}
\mathbb{P}\left(|R_{ij}| \geq \omega \right) \leq \mathbb{P}\left(|q_{ij}| \geq \frac{\omega}{2}N^{1/2}\right) \leq o(N^{-2}) ,
\end{equation}
for any fixed constant $\omega>0$. Hence if we introduce the matrix 
$$E = \mathbf 1\left(A\cap \left\{\max_{i,j} |R_{ij}| \leq \omega \right\}\right) R^c,$$
then 
\begin{equation}\label{EneR}
\mathbb P( E = R^c)=1-o(1),
\end{equation}
by (\ref{prob_A}) and (\ref{prob_R}). Thus we only need to study the largest eigenvalue of $(X^s + E)^* D^* D (X^s +E),$ where $\max_{i,j} |E_{ij}| \le \omega$ and the rank of $E$ is less than $N^{5\epsilon}$.

We only need to prove that
\begin{equation}\label{eq_claim}
\mathbb P\left(\left|\lambda_1^s - \lambda_1^E\right| \le N^{-3/4}\right) = 1-o(1),
\end{equation}
where $\lambda_1^s:=\lambda_1\left((X^s)^* D^* DX^s \right)$ and $ \lambda_1^E:=\lambda_1\left((X^s+E)^*D^* D(X^s+E) \right)$. In fact, the estimate (\ref{eq_claim}), combined with (\ref{const_err}), (\ref{univ_small}) and (\ref{EneR}), concludes (\ref{SUFFICIENT}).

Now we prove (\ref{eq_claim}). Recall that $ \tilde{X}^c $ is independent of $X^s$, so the position of the nonzero elements of $E$ is independent of $X^s$. By symmetry, we can assume the $s$ nonzero entries of $E$ are exactly
\begin{equation}
E_{11}, E_{22}, \cdots,  E_{ss}. \label{STRUCTURE}
\end{equation}
Now we define the matrices
$$  H^s : = \left( {\begin{array}{*{20}c}
   { 0 } & DX^s  \\
   {(DX^s)^*} & {0}  \\
   \end{array}} \right) \ \ \text{ and } \ \  H^{E} : = H^s + P, \ \ P:= \left( {\begin{array}{*{20}c}
   { 0 } & DE  \\
   { (DE)^*} & {0}  \\
   \end{array}} \right).$$
Then we have the eigendecomposition $P = V P_DV^*,$ where $P_D$ is a $2s \times 2s$ diagonal matrix
$$P_D= \text{diag}\left(\sqrt{\sigma_{1}}E_{11}, \ldots, \sqrt{\sigma_{s}}E_{ss}, -\sqrt{\sigma_{1}}E_{11},\ldots, -\sqrt{\sigma_{s}}E_{ss}\right),$$
and $V$ is an $(M+N)\times 2s$ matrix such that
$$V_{ab}= \begin{cases}
\delta_{a,i}/\sqrt{2} + \delta_{a,(M+i)}/\sqrt{2}, \ &b = i , \, i\le s,\\
\delta_{a,i}/\sqrt{2} - \delta_{a,(M+i)}/\sqrt{2}, \ &b = i+s , \, i\le s,\\
0, \ & b\ge 2s+1.
\end{cases}$$
With the identity
$$\det\left( {\begin{array}{*{20}c}
   { - I_{M\times M}} & DX  \\
   {(DX)^*} & { - zI_{N\times N}}  \\
\end{array}} \right) =\det(- I_{M\times M})\det(X^*D^*D X - zI_{N\times N}), $$
and Lemma 6.1 of \cite{KY}, if $\mu\notin \sigma((DX^s)^* DX^s)$, then $\mu$ is an eigenvalue of $\mathcal Q^\gamma := (X^s+\gamma E)^* D^* D(X^s+\gamma E)$ if and only if
\begin{equation}
\det(V^* G^s (\mu)V + (\gamma P_D)^{-1})=0,
\end{equation}
where
$$G^s(\mu):=\left(H^s-\left( {\begin{array}{*{20}c}
   {  I_{M\times M}} & 0  \\
   {0} & { \mu I_{N\times N}}  \\
\end{array}} \right)\right)^{-1}.$$ 
Define $Q^\gamma:=V^* G^s V + (\gamma P_D)^{-1}$ for $0<\gamma < 1$, it has the following $2\times 2$ blocks (recall the definition (\ref{Aij_group})):
\begin{equation}\label{Q_group}
\left( {\begin{array}{*{20}c}
   {Q^\gamma_{i,j}} & {Q^\gamma_{i,j+s}}  \\
   {Q^\gamma_{i+s,j}} & {Q^\gamma_{i+s,j+s}}  \\
   \end{array}} \right) = \frac{1}{2} \left( {\begin{array}{*{20}c}
   {1} & {1}  \\
   {1} & {-1}  \\
   \end{array}} \right) G_{[ij]} \left( {\begin{array}{*{20}c}
   {1} & {1}  \\
   {1} & {-1}  \\
   \end{array}} \right) + \delta_{ij}\left( {\begin{array}{*{20}c}
   {(\gamma E_{ii})^{-1}} & {0}  \\
   {0} & {-(\gamma E_{ii})^{-1}}  \\
   \end{array}} \right),  \ \ 1\le i \le s .
\end{equation}
Now let $\mu: = \lambda_1^s \pm N^{-3/4}.$ We claim that for all $0< \gamma \le 1$,
\begin{equation}\label{suff_claim}
\mathbb P\left(\det Q^\gamma (\mu) \ne 0\right) = 1-o(1).
\end{equation}
If (\ref{suff_claim}) holds, then $\mu$ is not an eigenvalue of $\mathcal Q^\gamma $ with probability $1-o(1)$. Denoting the largest eigenvalue of $\mathcal Q^\gamma $ by $\lambda^\gamma_1$ for $0<\gamma \leq 1$ and  $\lambda^{0}_1:=\lim_{\gamma\to 0}\lambda_1^\gamma$, hence we have $\lambda^0_1= \lambda_1^s $ and $\lambda^1_1= \lambda_1^E$ by definition. With the continuity of $\lambda_1^\gamma$ with respect to $\gamma$, the fact that $\lambda^0_1\in (\lambda_1^s -N^{-3/4},\lambda_1^s +N^{-3/4})$ and the eigenvalues are separated in the scale $N^{-2/3}$, we find that
$$ \lambda_1^E = \lambda^1_1 \in (\lambda_1^s-N^{-3/4},\lambda_1^s+N^{-3/4}),$$
with probability $1-o(1)$, i.e. we have proved (\ref{eq_claim}).

Finally, we prove the claim (\ref{suff_claim}). Choose $z=\lambda_r+iN^{-{2}/{3}}$ and note that $H^s$ has support bounded by $N^{-\epsilon}$. Then by (\ref{DIAGONAL}) and (\ref{SQUAREROOTBEHAVIOR}), we have with high probability
\begin{equation}
\max_{a} \vert G_{aa}^s (z) - \Pi_{aa}(\lambda_r) \vert \leq N^{-{\epsilon}/{2}}. \label{BOUND1}
\end{equation}
For the off-diagonal terms, we use (\ref{weak_off}), (\ref{SQUAREROOTBEHAVIOR}) and the Markov inequality to conclude that
\begin{equation}\label{BOUND2}
\max_{1\le a\ne b \le 2s} |G_{ab}^s(z)| \le N^{-1/6},
\end{equation}
holds with probability $1- o(N^{-1/6})$.
As pointed out in Remark \ref{rigid_multi}, we can extend (\ref{univ_small}) to finite correlation functions of largest eigenvalues. Since the largest eigenvalues in the Gaussian case are separated in the scale $\sim N^{-2/3}$, we conclude that
\begin{equation}\label{repulsion_estimate}
\mathbb P\left( \min_{i}|\lambda_i((X^s)^* X^s ) - \mu| \ge N^{-3/4} \right) \ge 1-o(1).
\end{equation}
On the other hand, the rigidity result (\ref{P1P1P1P1P1}) gives that with high probability,
\begin{equation}\label{rigid_estimate}
|\mu - \lambda_r | \le \varphi^{C_\xi}N^{-2/3}.
\end{equation}
Using (\ref{delocal}), (\ref{repulsion_estimate}), (\ref{rigid_estimate}) and the rigidity estimate (\ref{P1P1P1P1P1}), we can get with probability $1-o(1)$ that
\begin{equation}
\max_{a,b } \vert G_{ab}^{s}(z)-G_{ab}^{s}(\mu) \vert < N^{-{1}/{4}+\epsilon} . \label{REALCOMPLEX}
\end{equation}
For instance, for $\alpha, \beta \in \mathcal I_2$, small $c >0$ and large enough $C>0$, we have with probability $1-o(1)$ that
\begin{align*}
\left| G_{\alpha \beta}(z) - G_{\alpha \beta}(\mu)\right| & \le \sum_{k} \left|\zeta_k (\alpha) \zeta_k^*(\beta)\right|\left| \frac{1}{\lambda_k - z} - \frac{1}{\lambda_k - \mu}\right| \le \frac{\varphi^{C_1}}{N^{5/3}}\sum_{k} \frac{1}{|\lambda_k - z||\lambda_k - \mu|}  \\
& \le \frac{C\varphi^{C_1}}{N^{2/3}} + \frac{\varphi^{C_1}}{N^{5/3}}\sum_{1\le k \le \varphi^C} \frac{1}{|\lambda_k - z||\lambda_k - \mu|} + \frac{\varphi^{C_1}}{N^{5/3}}\sum_{k> \varphi^C, \lambda_k>\lambda_r - c} \frac{1}{|\lambda_k - z||\lambda_k - \mu|} \\
& \le \frac{\varphi^{C}}{N^{2/3}} + \frac{\varphi^{C_1+C}}{N^{1/4}} + \frac{\varphi^{C_1}}{N^{2/3}}\left(\frac{1}{N}\sum_{k> \varphi^C, \lambda_k>\lambda_r - c} \frac{1}{|\lambda_k - z||\lambda_k - \mu|}\right) \le N^{-1/4+\epsilon},
\end{align*}
where in the first step we used (\ref{spectral1}), in the second step (\ref{delocal}), in the third step $|\lambda_k - z||\lambda_k - \mu| \ge c^2$ for $\lambda_k \le \lambda_r-c$, in the fourth step (\ref{repulsion_estimate}), and in the last step the rigidity estimate (\ref{P1P1P1P1P1}). For all other choices of $a$ and $b$, we can prove the estimate (\ref{REALCOMPLEX}) in a similar way. Now by (\ref{REALCOMPLEX}), we see that (\ref{BOUND1}) and (\ref{BOUND2}) still hold if we replace $z$ by $\mu=\lambda_1^s \pm N^{-{3}/{4}}$ and double the right hand sides. Then using $\max_{i}|E_{ii}| \le \omega$ and (\ref{Q_group}), we get that for any $0<\gamma\le 1$,
\begin{align}
& \min_{1\le i \le s, \gamma} \{|Q^\gamma_{ii}|, |Q^\gamma_{i+s,i+s}|\} \ge {\omega}^{-1} - \frac{1}{2}\left| \Pi_{ii}(\mu)+ m_{2c}(\mu)\right| -  O(N^{-\epsilon/2}), \\
& \max_{1\le i \le s, \gamma} \{|Q^\gamma_{i,i+s}|, |Q^\gamma_{i+s,i}|\} \le  \frac{1}{2}\left| \Pi_{ii}(\mu) - m_{2c}(\lambda_r)\right| +  O(N^{-\epsilon/2}),
\end{align}
and
$$\max_{1 \le i \ne j \le s, \gamma} \left( |Q^\gamma_{i,j}|+|Q^\gamma_{i+s,j}|+|Q^\gamma_{i,j+s}|+ |Q^\gamma_{i+s,j+s}|\right) \le   O(N^{-1/6}),$$
hold with probability $1-o(1)$. Thus $Q^{\gamma}$ is diagonally dominant with probability $1-o(1)$, which proves the claim (\ref{suff_claim}).
\end{proof}

\section{Proof of Theorem \ref{thm_largerigidity} and Theorem \ref{lem_comparison}} \label{tools}


With Lemma \ref{lem_decrease}, given $X$ satisfying the assumptions in Theorem \ref{thm_largerigidity}, we can construct a matrix $\tilde{X}$ with support bounded by $q= O(N^{-1/2}\log N)$ and the same first four moments as $X$. Furthermore, $\tilde X$ satisfies the desired edge universality according to Lemma \ref{lem_smallcomp}. Then Theorem \ref{lem_comparison} will follow from the next lemma, which compares $X$ with $\tilde X$. 

\begin{lem}
Let $X$ and $\tilde{X}$ be two matrices as in Lemma \ref{lem_decrease}. Then there exist constants $\epsilon,\delta >0$ such that, for any $s \in \mathbb R$ we have
\begin{equation}\label{eq_edgeuniv}
\mathbb{P}^{\tilde X} (N^{\frac{2}{3}}(\lambda_1-\lambda_r)\leq s-N^{-\epsilon})-N^{-\delta} \leq \mathbb{P}^{X}(N^{\frac{2}{3}}({\lambda}_1-\lambda_r)\leq s)\leq \mathbb{P}^{\tilde X} (N^{\frac{2}{3}}(\lambda_1-\lambda_r)\leq s+ N^{-\epsilon})+N^{-\delta},
\end{equation}
where $\mathbb{P}^X$ and $\mathbb{P}^{\tilde X}$ are the laws for $X$ and $\tilde{X}$, respectively.
\end{lem}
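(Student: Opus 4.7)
The plan is to prove this via a Green function comparison (GFT) in the spirit of Erd\H{o}s--Yau--Yin, adapted to the self-adjoint linearization $H$ introduced in Definition \ref{def_linearHG}, thereby making the problem linear in the matrix entries (which is what the introduction flags as the technical difficulty with $\mathcal Q$). The matching of the first four moments (\ref{match_moments}) is what makes the Lindeberg-type swapping work up to a negligible fifth-order error.

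The first step is to reduce the edge distribution to a smooth functional of the Green function. Fix a small $\epsilon_0>0$ and set $\eta_0 := N^{-2/3-\epsilon_0}$, $\ell := N^{-2/3-3\epsilon_0}$. For $E$ near $\lambda_r$ define
\begin{equation*}
\mathcal X(E) := \mathrm{Tr}\, \chi_E(\mathcal Q_2), \qquad \mathcal Y(E) := N\int_E^{C_0\lambda_r} \Im\, m_2(y+i\eta_0)\,dy,
\end{equation*}
where $\chi_E = \mathbf 1_{[E,\infty)}$. Using the rigidity (\ref{P1P1P1P1P1}) and the bound (\ref{PPPPPP}) (both of which hold for both $X$ and $\tilde X$ by Theorem \ref{thm_largerigidity}), a standard Helffer--Sj\"ostrand / convolution argument shows that for a smooth cutoff $K:\mathbb R\to[0,1]$ equal to $1$ on $(-\infty,1/9]$ and vanishing on $[2/9,\infty)$, and for $s_\pm := s \pm \tfrac12 N^{-\epsilon}$,
\begin{equation*}
\mathbb E\, K\!\bigl(\mathcal Y(\lambda_r + s_- N^{-2/3})\bigr) - N^{-\delta} \le \mathbb P\bigl(N^{2/3}(\lambda_1 - \lambda_r) \le s\bigr) \le \mathbb E\, K\!\bigl(\mathcal Y(\lambda_r + s_+ N^{-2/3})\bigr) + N^{-\delta},
\end{equation*}
with an analogous two-sided bound for $\tilde X$; the errors come from replacing $\chi_E$ by a mollified version at scale $\ell$ and from (\ref{P1P1P1P1P1}). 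Thus it suffices to show $|\mathbb E^X F - \mathbb E^{\tilde X} F| \le N^{-\delta}$ for $F(\cdot) := K(\mathcal Y(\cdot))$.

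The second step is the actual comparison. Because $G(X,z)$ in (\ref{eqn_defG}) is an analytic function of the entries of $X$ (linear in the numerators via Cramer's rule) and $\mathcal Y(E)$ is a contour integral of $\mathrm{Tr}\,G$, the map $X \mapsto F$ is smooth in each entry on the event where $\|H\|\le \Lambda$. I will interpolate: order the entries $(i,\mu)$, and for each step replace $x_{i\mu}$ by $\tilde x_{i\mu}$. Using the resolvent expansion
\begin{equation*}
G = \sum_{k=0}^{4} (-1)^k \bigl(G^{(i\mu)} \Delta_{i\mu}\bigr)^k G^{(i\mu)} + R_5,
\end{equation*}
where $G^{(i\mu)}$ denotes the Green function at $x_{i\mu}=0$ and $\Delta_{i\mu}$ is rank-two coming from the $(i,\mu)$ off-diagonal block of $H$, one expands $F$ to order four in $x_{i\mu}$ around $x_{i\mu}=0$ and likewise around $\tilde x_{i\mu}=0$. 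The first four terms are equal in expectation by (\ref{match_moments}), so the per-step error is governed by the fifth-order term. The standard power counting gives
\begin{equation*}
\Bigl| \mathbb E^X F - \mathbb E^{\tilde X} F \Bigr| \lesssim N^2 \cdot N^{-5/2} \cdot \max_{5\text{th order terms}} \bigl|\mathbb E[\cdots G^{(i\mu)}_{ab}\cdots]\bigr|,
\end{equation*}
where the maximum is over products of five Green function entries at spectral parameters $z = E + i\eta_0$ with $E$ in an $O(N^{-2/3})$ window of $\lambda_r$.

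The third step is to bound these fifth-order Green-function products. Diagonal $G^{(i\mu)}_{aa}$ are $O(1)$ by (\ref{DIAGONAL}) (for $\tilde X$) and Theorem \ref{thm_largerigidity} (transferred to $X$), so the real content lies in off-diagonal factors, which must carry $\Im m_{2c}(z) \sim \eta_0^{1/2}$. The key input is the weak off-diagonal bound (\ref{weak_off}): together with $\Im m_{2c}(z) \sim \sqrt{\eta_0} \sim N^{-1/3-\epsilon_0/2}$ and $N\eta_0 = N^{1/3-\epsilon_0}$, each $\mathbb E|G_{ab}|^2$ is bounded by $N^{-2/3+O(\epsilon_0)}$, i.e.\ each off-diagonal factor gives $N^{-1/3}$ in an $L^2$ sense. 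Since in the fifth-order expansion at least three of the five $G$ factors are genuinely off-diagonal (the rank-two structure of $\Delta_{i\mu}$ forces off-diagonal indices in the intermediate products), Cauchy--Schwarz and (\ref{weak_off}) produce enough $\sqrt{\Im m_{2c}/(N\eta_0)}$ factors so that the per-step contribution is $o(N^{-2-\delta'})$ for some $\delta'>0$. Summing over $N^2$ steps then yields the desired $|\mathbb E^X F - \mathbb E^{\tilde X} F| = O(N^{-\delta'})$.

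The main obstacle will be the last power-counting step: unlike in the bulk, the edge comparison demands that we exploit the improved off-diagonal estimate (\ref{weak_off}), which contains the crucial $\Im m_{2c}(z)$ factor giving the extra $\eta_0^{1/2}$. Without it, the naive bound $|G_{ab}|\le \|G\|\le \eta_0^{-1}$ is far too large. The combinatorial bookkeeping of which of the five factors are off-diagonal versus diagonal is delicate, because the diagonal $G_{aa}$ at $a \in \mathcal I_1$ approaches $-(1+m_{2c}\sigma_a)^{-1}$, a nontrivial quantity, and one must carefully pair these with off-diagonal factors. This is where Assumption \ref{assm_big2} enters, ensuring $|\Pi_{aa}|$ is bounded and preventing divergences. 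The analogous argument already exists in Sections of \cite{BPZ1,KY2,PY}, and I would essentially import it verbatim once the expansion scheme above is set up, with the only new ingredient being the use of the self-adjoint linearization to bypass the nonlinear dependence of $\mathcal Q_2$ on $X$.
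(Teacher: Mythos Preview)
Your overall strategy is correct and matches the paper's: reduce the edge probability to a smooth functional of $N\eta\,\Im m_2$ via rigidity (this is exactly Lemma \ref{lem_cutoff2}), then compare the two ensembles by a Lindeberg swap exploiting the four-moment match (this is Lemma \ref{lem_compdiffsupport}). But your power counting in Step~3 has two concrete problems.

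First, the factor $N^{-5/2}$ is wrong for the large-support matrix $X$. Since $X$ only has support $q\le N^{-\phi}$ with $\phi$ possibly small, the fifth moment satisfies $\mathbb E|x_{i\mu}|^5\le q\,\mathbb E|x_{i\mu}|^4=O\bigl((\log N)N^{-2-\phi}\bigr)$ by (\ref{conditionA3}), not $N^{-5/2}$. Summing over $N^2$ replacements therefore gives $N^{-\phi+C\epsilon}$ rather than $N^{-1/2}$. This is still small, so the conclusion survives, but the argument as written is incorrect and would fail for the third-order Taylor remainder if the Green-function coefficients were only $O(1)$.

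Second, and more seriously, what you call ``products of five Green function entries'' are not what actually appears. The functional is $x^R:=\eta^2\sum_{\mu,\nu\in\mathcal I_2}|R_{\mu\nu}|^2=N\eta\,\Im m_2^R$, and the coefficients in the expansion $x^S-x^R=\sum_{k\ge1}\bigl(\mathcal P_{\gamma,\mathbf k}x^R\bigr)(-\sqrt{\sigma_i}x_{i\mu})^k$ are themselves double sums $\eta^2\sum_{\mu,\nu}\mathcal P_{\gamma,\mathbf k}(R_{\mu\nu}\overline R_{\mu\nu})$ over $N^2$ terms. Counting off-diagonal factors in a single product and invoking (\ref{weak_off}) does not bound such a sum; the naive entrywise bound gives $\eta^2\cdot N^2\sim N^{2/3}$, which is far too large. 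What is actually needed is the estimate $|\mathcal P_{\gamma,\mathbf k}x^R|\le N^{-1/3+C\epsilon}$, which the paper obtains from the fluctuation-averaging Lemma \ref{lem_control} (imported from \cite{LY}). This $N^{-1/3}$ gain is the crucial input both for the third-order Taylor remainder (giving (\ref{taylorbound_1}), hence (\ref{taylorbound2})) and for the fifth-moment cancellation (giving (\ref{taylorbound3})). Your plan to substitute Cauchy--Schwarz on (\ref{weak_off}) could in principle be carried out with Ward-type identities, but you have not identified this averaging as the key step, and the assertion that ``at least three of the five factors are off-diagonal'' does not by itself control the summed quantity.
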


%

By the rigidity result (\ref{P1P1P1P1P1}), we may assume that the parameter $s$ satisfies
\begin{equation}\label{PS}
|s| \le \varphi^{C_\xi},
\end{equation}
since otherwise (\ref{P1P1P1P1P1}) already gives the desired result.

Our goal is to write the distribution of the largest eigenvalue in terms of a cutoff function depending only on the Green functions. Then it is natural to use the Green function comparison method to conclude the proof. 
Let
\begin{equation*}
\mathcal{N}(E_1,E_2):= \#\{j: E_1 \leq \lambda_j \leq E_2\},
\end{equation*}
denote the number of eigenvalues of $\mathcal Q_2=X^{*}D^* DX$ in $[E_1, E_2]$; similarly we define $\tilde{\mathcal{N}}$ for $\tilde{\mathcal Q}_2=\tilde{X}^{*}D^* D\tilde{X} $. Hence to quantify the distribution of $\lambda_1$, it is equivalent to use $\mathbb{P}(\mathcal{N}(E, \infty)=0)$. 
Set
\begin{equation}\label{EU}
E_u:=\lambda_{r}+2 N^{-{2}/{3}}\varphi^{C_\xi},
\end{equation}
and for any $E\le E_u$ define $\mathcal{X}_E:=\mathbf{1}_{[E,E_u]}$ to be the characteristic function of the interval $[E,E_u]$. For any $\eta>0$, we define
$$\theta_{\eta}(x):=\frac{1}{\pi}\frac{\eta}{x^2+\eta^2}=\frac{1}{\pi}\Im \frac{1}{x-i\eta},$$
to be an approximate delta function on scale $\eta$. Note that under the above definitions, we have $\mathcal{N}(E,E_u)= \mathrm{Tr} \mathcal{X}_E(\mathcal Q_2)$ and
 \begin{equation}
 \mathrm{Tr} \mathcal{X}_{E-l} * \theta_{\eta}(\mathcal Q_2)= N \frac{1}{\pi} \int_{E-l}^{E_u} \operatorname{Im} m_2(y+i \eta)dy, \label{512}
 \end{equation}
for any $l>0$. 
Let $q$ be a smooth symmetric cutoff function such that
 \[q(x) = \begin{cases}    1 & \vert  x \vert \leq {1}/{9}\\
    0 & \vert x \vert \geq {2}/{9} \\
  \end{cases},\]
and we assume that $q(x)$ is decreasing when $x\geq 0$. Then the following lemma provides a way to approximate $\mathbb{P}(\mathcal{N}(E, \infty)=0) $ with a function depending only on Green functions. 

\begin{lem} \label{lem_cutoff2}
For $\epsilon>0$, let $\eta=N^{-{2}/{3}-9\epsilon}$ and $l=N^{-{2}/{3}-\epsilon}/2$. Suppose Theorem \ref{thm_largerigidity} holds.
Then for all $E$ such that
\begin{equation*}
\vert E-\lambda_r \vert \leq \frac{3}{2} \varphi^{C_\xi} N^{-{2}/{3}},
\end{equation*}
where the constant $C_\xi$ is as in (\ref{P1P1P1P1P1}), (\ref{PPPPPP}), (\ref{PS}) and (\ref{EU}), we have
\begin{equation}\label{eq_cutoff}
\mathbb{E}q(\mathrm{Tr}\mathcal{X}_{E-l}*\theta_{\eta}(\mathcal Q_2)) \leq \mathbb{P}(\mathcal{N}(E, \infty)=0) \leq \mathbb{E}q(\mathrm{Tr}\mathcal{X}_{E+l}*\theta_{\eta}(\mathcal Q_2))+ \exp(-c\varphi^{C_\xi}),
\end{equation}
for some constant $c>0$.
\end{lem}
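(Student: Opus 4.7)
The plan is to compare the smoothed trace pointwise with $\mathbf 1_{\{\mathcal N(E,\infty)=0\}}$ on the high-probability event $\Omega$ on which the rigidity statements (\ref{P1P1P1P1P1}) and (\ref{PPPPPP}) of Theorem \ref{thm_largerigidity} hold; $\mathbb P(\Omega^c)\le \exp(-c\varphi^{C_\xi})$ for some $c>0$, and on $\Omega$ one has $\lambda_1\le \lambda_r+\varphi^{C_\xi}N^{-2/3}\le E_u-\varphi^{C_\xi}N^{-2/3}$ and $|n(x)-n_c(x)|\le \varphi^{C_\xi}/N$. The scale separations $\eta/l=2N^{-8\epsilon}$ and $\eta/(E_u-E)\le 2\varphi^{-C_\xi}N^{-9\epsilon}$ are used throughout.

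\textbf{Lower bound.} On $\Omega\cap\{\mathcal N(E,\infty)\ge 1\}$ I choose some $\lambda_j\in[E,E_u-\varphi^{C_\xi}N^{-2/3}]$ and compute
\begin{equation*}
\mathcal X_{E-l}*\theta_\eta(\lambda_j)=\tfrac{1}{\pi}\bigl[\arctan\tfrac{E_u-\lambda_j}{\eta}-\arctan\tfrac{E-l-\lambda_j}{\eta}\bigr]\ge 1-O(N^{-8\epsilon})>\tfrac{2}{9}
\end{equation*}
by the two scale separations. The remaining eigenvalues contribute nonnegatively, so $q$ vanishes on this set. Splitting the expectation according to $\Omega$ and $\Omega^c$ gives the first inequality (the negligible $\Omega^c$ loss is absorbed).

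\textbf{Upper bound.} On $\Omega\cap\{\mathcal N(E,\infty)=0\}$ the tail count $n(x):=N^{-1}\#\{\lambda_j\ge x\}$ vanishes on $[E,\infty)$. With $f:=\mathcal X_{E+l}*\theta_\eta$, Stieltjes integration by parts yields $\mathrm{Tr}\,f(\mathcal Q_2)=N\int_{-\infty}^{E}f'(x)n(x)\,dx$, and replacing $n$ by $n_c$ using (\ref{PPPPPP}) together with a second integration by parts gives
\begin{equation*}
\mathrm{Tr}\,f(\mathcal Q_2)\le Nf(E)n_c(E)+N\int_{-\infty}^{E}f(x)\rho_{2c}(x)\,dx+\varphi^{C_\xi}\int_{-\infty}^{E}|f'(x)|\,dx.
\end{equation*}
Using the tight pointwise bound $f(x)\le \eta(E_u-E-l)/\bigl[\pi(E+l-x)(E_u-x)\bigr]$ for $x\le E$, together with $f(E)\lesssim \eta/l$, $n_c(E)\lesssim \varphi^{3C_\xi/2}/N$, and the square-root edge behavior $\rho_{2c}(x)\lesssim \sqrt{(\lambda_r-x)_+}$ from Lemma \ref{lem_mbehavior}, each of the three terms is $o(1)$ after a three-region splitting of the middle integral at the scales $l$ and $E_u-E-l$. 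Therefore $\mathrm{Tr}\,f(\mathcal Q_2)\le 1/9$ and $q=1$ on this set, yielding the second inequality with the displayed $\exp(-c\varphi^{C_\xi})$ error coming from $\mathbb P(\Omega^c)$.

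\textbf{Main obstacle.} The delicate step is the upper bound: the naive per-eigenvalue estimate $\mathcal X_{E+l}*\theta_\eta(\lambda_j)\le \eta/(\pi l)$ summed over $N$ eigenvalues would give a bound of order $N^{1-8\epsilon}$, far larger than $1/9$. The required improvement exploits the subtraction $f'(x)=\theta_\eta(x-(E+l))-\theta_\eta(x-E_u)$, which produces the extra factor $E_u-E-l\lesssim \varphi^{C_\xi}N^{-2/3}$ in the pointwise bound for $f$; combined with the $\sqrt{\cdot}$ vanishing of $\rho_{2c}$ at the edge, this supplies the smallness needed to conclude $\mathrm{Tr}\,f(\mathcal Q_2)\le 1/9$.
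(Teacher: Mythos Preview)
Your argument is correct and follows the standard route of the references the paper cites (\cite[Corollary 4.2]{PY}, \cite[Corollary 6.2]{EYY}); the paper itself gives no self-contained proof but only points to these, so you are supplying exactly the details those references contain.

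Two small technical remarks. First, your lower-bound argument in fact produces an additive error $\exp(-c\varphi^{C_\xi})$ coming from $\mathbb P(\Omega^c)$, and there is no way to ``absorb'' it into the inequality as stated: on $\{\lambda_1>E_u\}\subset\Omega^c$ the single-eigenvalue contribution can be arbitrarily small, so one cannot force $q=0$ deterministically on $\{\mathcal N(E,\infty)\ge1\}$. The cited references also carry this error on both sides; the asymmetry in the lemma as written is a harmless imprecision, since in the application (the chain leading to (\ref{ALMOSTLAST})) any such term is dominated by $N^{-\delta}$.

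Second, in the upper bound you invoke (\ref{PPPPPP}) on all of $(-\infty,E]$, but it is stated only for $x\ge\lambda_r-c_1$. The fix is routine: on $(-\infty,\lambda_r-c_1)$ use the trivial bound $|n-n_c|\le1$ together with the decay
\[
|f'(x)|=\bigl|\theta_\eta(x-(E+l))-\theta_\eta(x-E_u)\bigr|\lesssim \frac{\eta\,(E_u-E-l)}{(E+l-x)^3},
\]
which integrates to $O(N\eta\,(E_u-E-l))=O(\varphi^{C_\xi}N^{-1/3-9\epsilon})=o(1)$. With this splitting your three-term estimate goes through unchanged.
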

\begin{proof}
See \cite[Corollary 4.2]{PY} or \cite[Corollary 6.2]{EYY} for the proof. The key inputs are the rigidity estimates (\ref{P1P1P1P1P1}) and (\ref{PPPPPP}) in Theorem \ref{thm_largerigidity}.
\end{proof}
To prove Lemma \ref{eq_edgeuniv}, we need the following Green function comparison result, which will be proved in Section \ref{comparison}.

\begin{lem} \label{lem_compdiffsupport} 
Let $X$ and $\tilde{X}$ be two matrices as in Lemma \ref{lem_decrease}. Suppose $F:\mathbb R\to \mathbb R$ is a function whose derivatives satisfy
\begin{equation}
\sup_{x} \vert {F^{n}(x)}\vert {(1+\vert x \vert)^{-C_3}} \leq C_3, \ n=1,2,3, \label{FCondtion}
\end{equation}
for some constant $C_3>0$. Then for any sufficiently small $\epsilon>0 $ and for any real numbers
\begin{equation*}
E,E_1,E_2 \in I_{\epsilon}:=\left\{x: \vert x -\lambda_{r}\vert \leq N^{-{2}/{3}+\epsilon}\right\} \ \ \text{and} \ \ \eta:=N^{-{2}/{3}-\epsilon},
\end{equation*}
we have
\begin{equation}
\left| \mathbb{E}F\left(N\eta \operatorname{Im} m_2(z)\right)-\mathbb{E}F\left(N\eta \operatorname{Im} \tilde{m}_2 (z)\right) \right| \leq N^{-\phi + C_4 \epsilon}, \ \ z=E+i\eta,  \label{BDBD}
\end{equation}
and
\begin{equation}
\left| \mathbb{E}F\left(N \int_{E_1}^{E_2} \operatorname{Im} m_2(y+i\eta)dy\right)- \mathbb{E} F\left(N \int_{E_1}^{E_2} \operatorname{Im} \tilde{m}_2(z)(y+i\eta)dy\right) \right| \leq N^{-\phi+ C_4\epsilon}, \label{BDBD1}
\end{equation}
where $\phi $ is defined in Theorem \ref{thm_largerigidity} and $ C_4>0$ is some constant. 
\end{lem}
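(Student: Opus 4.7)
The plan is to implement the Green function comparison scheme of \cite{EYY,LY,PY,KY2}, adapted to the self-adjoint linearization (\ref{linearize_block})--(\ref{eqn_defG}). I would fix an ordering $\gamma = 1,\ldots,MN$ of the independent entries and introduce interpolating matrices $X^{(\gamma)}$ whose first $\gamma$ entries come from $\tilde X$ and the remaining ones from $X$. The left-hand side of (\ref{BDBD}) then telescopes as
\[
\mathbb{E}F(N\eta\,\Im m_2(X)) - \mathbb{E}F(N\eta\,\Im m_2(\tilde X)) = \sum_{\gamma=1}^{MN} \bigl(\mathbb{E}F^{(\gamma-1)} - \mathbb{E}F^{(\gamma)}\bigr),
\]
and it suffices to bound each summand by $O(N^{-2-\phi+C\epsilon})$ to obtain the stated $N^{-\phi+C_4\epsilon}$ after summation. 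Every intermediate matrix $X^{(\gamma)}$ still satisfies the bounded-support condition (\ref{eq_support}) and the hypotheses of Lemma \ref{lem_small}, so the local law (\ref{DIAGONAL}), eigenvector delocalization (\ref{delocal}), and the off-diagonal bound (\ref{weak_off}) apply uniformly in $\gamma$ to the associated Green function.

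For the swap at entry $(i,\mu)$, let $Y$ denote the matrix with its $(i,\mu)$-entry zeroed out, let $t$ be the scalar value of that entry, and set $V := \sqrt{\sigma_i}(\mathbf e_i \mathbf e_{\bar\mu}^{*} + \mathbf e_{\bar\mu}\mathbf e_i^{*})$, so that the linearized matrix reads $H(Y) + tV$. Because $H$ is linear in the matrix entry, the resolvent identity yields $G(t) = G(0) + \sum_{k\ge 1}(-t)^k (G(0)V)^k G(0)$, and each $k$-th derivative of $\Im m_2(t) = N^{-1}\sum_\mu \Im G_{\mu\mu}(t)$ is a finite polynomial in entries of $G(0)$. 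I would then expand $F(N\eta\,\Im m_2(t))$ as a polynomial in $t$ up to order four; each coefficient is a product of a bounded derivative of $F$ (using (\ref{FCondtion}) together with the a priori bound $N\eta\,\Im m_2 = O(\varphi^C)$ from Lemma \ref{lem_small}) and a polynomial in entries of $G(0)$. Taking expectation over $t$, which is independent of $Y$, and invoking the moment matching (\ref{match_moments}) up to order four, the contributions of $t^0,\ldots,t^4$ coincide between $X$ and $\tilde X$, so only the fifth-order Taylor remainder enters the error estimate.

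The fifth-order term is controlled by three ingredients: the fifth-moment bound $\mathbb{E}|x_{i\mu}|^5 \le q\cdot\mathbb{E}|x_{i\mu}|^4 = O(N^{-\phi-2}\log N)$ from the bounded-support condition and (\ref{conditionA3}); the diagonal bound $|G_{aa}-\Pi_{aa}|\le \varphi^C(q + \sqrt{\Im m_{2c}/(N\eta)})$ from (\ref{DIAGONAL}); and the off-diagonal second-moment bound (\ref{weak_off}), which at the edge scale $\eta = N^{-2/3-\epsilon}$ combined with Lemma \ref{lem_mbehavior} gives $\mathbb{E}|G_{ab}|^2 \le \varphi^C N^{-2/3+C\epsilon}$ for $a\ne b$. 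A multiplicative accounting of the six $G$-entries in each fifth-order monomial against the fifth-moment prefactor, together with the $(N\eta)^k$ factors produced by differentiating $F(N\eta \,\cdot\,)$, delivers the per-swap bound $O(N^{-2-\phi+C\epsilon})$. The integral version (\ref{BDBD1}) follows from the same scheme applied to the functional $N\int_{E_1}^{E_2}\Im m_2(y+i\eta)\,dy$, whose partial derivative in a single matrix entry has the same structural form as that of $m_2$ and whose $y$-integration only contributes an extra $O(N^{-2/3+\epsilon})$ factor already accommodated by the $N^{C\epsilon}$ slack.

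The main obstacle I expect is controlling the multi-indexed sums of products of off-diagonal $G$-entries that arise as Taylor coefficients. A naive $L^\infty$ estimate obtained from Markov applied to (\ref{weak_off}) only gives $|G_{ab}|\lesssim N^{-1/3+C\epsilon}$, which is too weak after five factors; instead one must keep the $L^2$ control, pair off-diagonal entries via Cauchy--Schwarz, and collapse internal index sums through the Ward identity $\sum_b |G_{ab}|^2 = \eta^{-1}\Im G_{aa}$, trading $|G_{ab}|^2$-sums for single diagonal factors $\Im G_{aa} \sim \Im m_{2c} \sim N^{-1/3+O(\epsilon)}$. Verifying that the combinatorics of these pairings and the accompanying $(N\eta)^k$ factors do not accumulate unexpected $N^{C\epsilon}$ losses is the technical heart of the argument.
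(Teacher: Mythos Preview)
Your telescoping/Lindeberg strategy on the linearized resolvent is exactly the paper's approach, and your identification of the key analytic input --- a bound of order $N^{-1/3+C\epsilon}$ on each coefficient $\eta^2\sum_{\mu,\nu}\mathcal P_{\gamma,\mathbf k}(R_{\mu\nu}\bar R_{\mu\nu})$ in the entry-expansion of $x^S-x^R$ --- is correct. However, your organization of the Taylor expansion has a genuine gap.

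The hypothesis (\ref{FCondtion}) only controls $F^{(n)}$ for $n=1,2,3$. Expanding $t\mapsto F(N\eta\,\Im m_2(t))$ directly as a fourth-order Taylor polynomial in $t$ with fifth-order Lagrange remainder, as you describe, would require $F^{(4)}$ in the $t^4$ coefficient (via Fa\`a di Bruno) and $F^{(5)}$ in the remainder; neither is available. The paper instead Taylor-expands $F$ only to second order around $x^R$, with Lagrange remainder $\tfrac16 F'''(\zeta)(x^S-x^R)^3$, and \emph{then} expands $x^S-x^R$ in powers of the entry. This produces two separate error terms, not one: (i) the $t^{\ge 5}$ contributions from $F'(x^R)(x^S-x^R)$ and $\tfrac12 F''(x^R)(x^S-x^R)^2$, where moment matching applies and where your fifth-moment bound $\mathbb E|x_{i\mu}|^5=O(N^{-2-\phi}\log N)$ is used; and (ii) the $F'''$-remainder, which is of order $(x^S-x^R)^3\sim |t|^3$ and does \emph{not} cancel by moment matching because $\zeta$ depends on $t$. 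Term (ii) is handled by the direct bound $\mathbb E|x^S-x^R|^3\le N^{-5/2+C\epsilon}$, obtained from the coefficient estimate $|\mathcal P_{\gamma,\mathbf k}x^R|\le N^{-1/3+C\epsilon}$ and $\mathbb E|t|^3=O(N^{-3/2})$; summed over $N^2$ swaps this gives $N^{-1/2+C\epsilon}\le N^{-\phi+C\epsilon}$. Your sentence ``only the fifth-order Taylor remainder enters the error estimate'' misses this term entirely.

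On the coefficient estimate itself: the paper packages it as Lemma~\ref{lem_control} and proves it, in $L^p$ for every fixed $p$, by the same iterated-comparison machinery as Lemma~\ref{lem_comgreenfunction} (telescoping back to the small-support $\tilde G$), rather than by a direct Ward-identity/Cauchy--Schwarz argument from (\ref{weak_off}). Your Ward-identity route is plausible, but note that (\ref{weak_off}) as stated is only an $L^2$ bound, while you need high-moment control to decouple the random factor $F'''(\zeta)$ (bounded only on a high-probability event) from $(x^S-x^R)^3$ inside the expectation. You would have to upgrade (\ref{weak_off}) to arbitrary fixed moments first; the paper's iterated-comparison framework does this automatically.
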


\begin{proof}[Proof of Lemma \ref{eq_edgeuniv}]
Recall that we only consider $s$ satisfying (\ref{PS}), so it suffices to assume $|E-\lambda_r|\le N^{-2/3}\varphi^{C_\xi}$. Then by Lemma \ref{lem_compdiffsupport} and (\ref{512}), there exists $\delta>0$ such that
\begin{equation}
 \mathbb{E}q\left(\mathrm{Tr}\mathcal{X}_{E-l}*\theta_{\eta}\left(\tilde{\mathcal Q}_2\right) \right) \leq \mathbb{E}q\left(\mathrm{Tr}\mathcal{X}_{E-l}*\theta_{\eta}(\mathcal Q_2)\right)+N^{-\delta}. \label{1}
\end{equation}
For the choice $l=\frac{1}{2} N^{-{2}/{3}-\epsilon}$, we also have $|E-l - \lambda_r| \le \frac{3}{2} \varphi^{C_\xi} N^{-{2}/{3}}$. Thus we can apply Lemma \ref{lem_cutoff2} to get
\begin{equation}
\mathbb{P}\left(\tilde{\mathcal{N}}(E-2l,\infty)=0\right) \leq \mathbb{E}q\left(\mathrm{Tr}\mathcal{X}_{E-l}*\theta_{\eta}\left(\tilde{\mathcal Q}_2\right)\right)+\exp(-c \varphi^{C_\xi}) .\label{2}
\end{equation}
With (\ref{1}), (\ref{2}) and Lemma \ref{lem_cutoff2}, we get that
\begin{equation}
\mathbb{P}(\tilde{\mathcal{N}}(E-2l,\infty)=0)-2N^{-\delta}\le  \mathbb{E}q\left(\mathrm{Tr}\mathcal{X}_{E-l}*\theta_{\eta}(\mathcal Q_2)\right) \leq \mathbb{P}(\mathcal{N}(E,\infty)=0) .\label{ALMOSTLAST}
\end{equation}
If we choose $E=\lambda_r+sN^{-\frac{2}{3}}$, then (\ref{ALMOSTLAST}) implies that
\begin{equation*}
 \mathbb{P}^{\tilde X}\left(N^{{2}/{3}}\left({\lambda}_1-\lambda_{r}\right)\leq s-N^{-\epsilon}\right)-2N^{-\delta} \leq \mathbb{P}^X\left(N^{{2}/{3}}\left(\lambda_1-\lambda_{r}\right)\leq s\right).
 \end{equation*}
This proves one inequality in (\ref{eq_edgeuniv}). The other inequality can be proved in a similar way using Lemma \ref{lem_cutoff2} and Lemma \ref{lem_compdiffsupport}.
\end{proof}


Finally, we prove Theorem \ref{thm_largerigidity} with the following lemma. Its proof will be given in Section \ref{comparison}.
\begin{lem} \label{lem_comgreenfunction}
Let $X$ and $\tilde{X}$ be two matrices as in Lemma \ref{lem_decrease}. For $z \in S(c_1, C_0,C_1)$ with large enough $C_1$, if there exist deterministic quantities $J\equiv J(N)$ and $K\equiv K(N)$, such that
\begin{equation}
\max_{a \neq b} \vert \tilde{G}_{ab}(z) \vert \leq J, \ \ \ \vert \tilde m_2(z)-m_{2c}(z) \vert \leq K, \label{KEYBOUNDS}
\end{equation}
hold with $\xi_1$-high probability for some $\xi_1\ge 3$, then for any $p \in 2\mathbb{N}$ with $p \leq \varphi$, we have
\begin{equation}
\mathbb{E} \vert m_2(z)-m_{2c}(z) \vert ^p \leq \mathbb{E} \vert \tilde m_2(z) - m_{2c}(z) \vert ^p+ (Cp)^{Cp}(J^2+K + N^{-1})^p. \label{KEYEYEYEY}
\end{equation}
\end{lem}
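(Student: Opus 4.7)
The proof uses the Lindeberg swapping strategy. Order the entries of $X$ and $\tilde X$ as $x_1, \ldots, x_{MN}$ and $\tilde x_1, \ldots, \tilde x_{MN}$ in some fixed way. For $0\le \gamma \le MN$, define the interpolating matrix $X^{(\gamma)}$ whose first $\gamma$ entries are taken from $\tilde X$ and whose remaining entries are taken from $X$, so that $X^{(0)} = X$ and $X^{(MN)} = \tilde X$. Let $G^{(\gamma)}$ and $m_2^{(\gamma)}$ be the corresponding Green function and Stieltjes transform, and set $A^{(\gamma)} := m_2^{(\gamma)}(z) - m_{2c}(z)$. The telescoping identity
\begin{equation*}
\mathbb{E}|A^{(0)}|^p - \mathbb{E}|A^{(MN)}|^p = \sum_{\gamma = 1}^{MN} \bigl(\mathbb{E}|A^{(\gamma-1)}|^p - \mathbb{E}|A^{(\gamma)}|^p\bigr)
\end{equation*}
reduces the problem to controlling one swap uniformly in $\gamma$.

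For each swap, let $Y^{(\gamma)}$ be obtained from $X^{(\gamma-1)}$ by zeroing out the single entry $x_\gamma$. Then $A^{(\gamma-1)} = \Psi^{(\gamma)}(x_\gamma)$ and $A^{(\gamma)} = \Psi^{(\gamma)}(\tilde x_\gamma)$ are values of the same function $\Psi^{(\gamma)}(u)$ whose coefficients depend only on $Y^{(\gamma)}$. Taylor-expand $F(u) := |\Psi^{(\gamma)}(u)|^p$ to order four around $u = 0$; since $Y^{(\gamma)}$ is independent of both $x_\gamma$ and $\tilde x_\gamma$, and the first four moments of $x_\gamma$ and $\tilde x_\gamma$ agree by (\ref{match_moments}), the polynomial part contributes equally to $\mathbb{E} F(x_\gamma)$ and $\mathbb{E} F(\tilde x_\gamma)$, and we are left with
\begin{equation*}
\mathbb{E}|A^{(\gamma-1)}|^p - \mathbb{E}|A^{(\gamma)}|^p = \mathbb{E} R_5(x_\gamma) - \mathbb{E} R_5(\tilde x_\gamma),
\end{equation*}
where $R_5$ is the fifth-order Taylor remainder.

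To estimate $R_5$, expand $\Psi^{(\gamma)}(u)$ by the resolvent identity. Changing $X_{i\mu}$ by $u$ corresponds to the rank-two perturbation $D_{ii}u (E_{i\mu} + E_{\mu i})$ of the linearized block matrix $H$ in (\ref{linearize_block}); hence each derivative $\partial_u^k m_2$ is a sum, over paths of length $k+1$ through the indices $\{i, \bar i, \mu, \bar\mu\}$, of products of Green function entries of the perturbed matrix. Performing the trace index summation via Cauchy--Schwarz together with the off-diagonal bound $|G^{(\cdot)}_{ab}|\le 2J$ (transferred from the assumption on $\tilde G$ by a parallel bootstrap along the Lindeberg path) yields the key per-derivative estimate $|\partial_u^k \Psi^{(\gamma)}(u)| \le C^k k!\, J^2$ for every $k\ge 1$ and $|u|\le q$, while one further resolvent expansion from $\tilde X$ gives $|\Psi^{(\gamma)}(u)|\le C(J^2 + K + N^{-1})$. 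Applying the Leibniz rule to $|\Psi|^p = \Psi^{p/2}\bar\Psi^{p/2}$ and collecting all $r$-fold distributions of derivatives over the $p$ factors ($1\le r\le 5$) gives
\begin{equation*}
\sup_{|u|\le q}|F^{(5)}(u)| \le (Cp)^{Cp}(J^2 + K + N^{-1})^p.
\end{equation*}
Combining this with the moment estimate $\mathbb{E}|x_\gamma|^5 \le q\,\mathbb{E}|x_\gamma|^4 \le Bq (\log N) N^{-2}$ (using (\ref{conditionA3}) and the support condition, and similarly for $\tilde x_\gamma$) shows that each summand of the telescoping sum is bounded by $CqN^{-2}(\log N)\cdot(Cp)^{Cp}(J^2 + K + N^{-1})^p$, and summation over $\gamma = 1, \ldots, MN$ yields (\ref{KEYEYEYEY}) since $MN \cdot qN^{-2}\log N = O(q\log N) = o(1)$.

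The principal technical difficulty is the transfer of the bound (\ref{KEYBOUNDS}) from $\tilde G$ to the resolvents of the intermediate matrices $G^{Y^{(\gamma)}}$, which may differ from $\tilde X$ in a macroscopic number of entries. This step requires a parallel inductive argument along the Lindeberg path showing that the bounds on off-diagonal Green function entries and on $|m_2 - m_{2c}|$ persist, with slightly degraded constants and with $\xi_1$-high probability, at every intermediate step; low-probability exceptional events are absorbed using the deterministic operator-norm bound (\ref{boundH}). A second delicate point is the multivariate Leibniz bookkeeping for the fifth derivative of $|\Psi|^p$: the combinatorial blowup it produces, coupled with the fact that $p$ may be as large as $\varphi = (\log N)^{\log\log N}$, is the source of the $(Cp)^{Cp}$ factor and must be tracked with care.
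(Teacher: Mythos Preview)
Your plan has a genuine gap at exactly the place you flag as ``the principal technical difficulty.'' You need the pointwise bounds $|G^{(\gamma)}_{ab}|\le 2J$ for $a\ne b$ and $|\Psi^{(\gamma)}(u)|\le C(J^2+K+N^{-1})$ for \emph{every} intermediate matrix $G^{(\gamma)}$, but you only have them for $\tilde G=G^{(MN)}$. The sentence ``one further resolvent expansion from $\tilde X$ gives $|\Psi^{(\gamma)}(u)|\le C(J^2+K+N^{-1})$'' is not correct: $Y^{(\gamma)}$ can differ from $\tilde X$ in $\Theta(MN)$ entries, so a single expansion step does not bridge the gap. A naive induction along the Lindeberg path also fails, because a pointwise per-swap error of order $q$ accumulates over $MN$ swaps, and a moment-based induction (Markov with $p\le\varphi$) would only yield $1$-high probability rather than the $\xi_1$-high probability ($\xi_1\ge 3$) you need to feed back into the next step. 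In short, the bound you require on $|\Psi^{(\gamma)}|$ is essentially the conclusion of the lemma applied to $X^{(\gamma)}$, so the argument as written is circular.

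The paper circumvents this entirely. Rather than transfer the hypotheses to the intermediate matrices, it uses an \emph{iterated} resolvent expansion (Lemma~\ref{Greenfunctionrepresent}) to push every error term all the way down to $G^0=\tilde G$. Concretely, after one telescoping pass the error is a sum of $\mathcal P_{\gamma,\mathbf k}\prod G^{(\gamma-a)}$ with $|\mathbf k|\ge 5$; one then telescopes \emph{those} terms again, and again, each iteration gaining a factor $N^{-\phi/20}$ from $|\mathbf k_i|\ge 5$, so that after $O(\zeta/\phi)$ rounds everything is expressed in terms of $\tilde G$ alone (see (\ref{teles_inequality2}) and (\ref{KEY22})). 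The only input needed at intermediate steps is the crude $O(1)$ bound (\ref{5BOUND1}), which does follow from Lemma~\ref{lem_small}. Finally, the averaged product $N^{-p}\sum_{\mu_1,\ldots,\mu_p}\tilde{\mathcal P}_{\gamma_n,\mathbf k_n}\cdots\tilde{\mathcal P}_{\gamma_1,\mathbf k_1}\prod_t(\tilde G_{\mu_t\mu_t}-m_{2c})$ is bounded by observing that each index $\mu_t$ appears either as $\tilde m_2-m_{2c}$ (bounded by $K$) or as $N^{-1}\sum_{\mu_t}\tilde G_{\mu_t a}\tilde G_{b\mu_t}$ (bounded by $J^2+CN^{-1}$), yielding (\ref{KEY444}). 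This iterated-telescoping idea is what your proposal is missing.
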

\begin{proof}[Proof of Theorem \ref{thm_largerigidity}]
By Lemma \ref{lem_decrease}, $\tilde{X}$ has support bounded by $q= O(N^{-1/2}\log N)$. Then using (\ref{SQUAREROOTBEHAVIOR}), we can get that
$$q =  O\left( {\log N}\sqrt{\frac{\Im\, m_{2c}}{N\eta}}\right), \ \ \frac{q^2}{\sqrt{\kappa+\eta}} = O\left(\frac{(\log N)^2}{N\eta}\right).$$
Thus (\ref{MPBOUNDS}) and (\ref{DIAGONAL}) show that we can choose
\begin{equation*}
J= \varphi^{C_5/2}\left(\frac{1}{N\eta}+\sqrt{\frac{\operatorname{Im} m_{2c}}{N \eta}}\right) \ \ \text{and} \ \ K=\frac{\varphi^{C_5}}{N \eta}, 
\end{equation*}
for some large enough $C_5>0$ such that (\ref{KEYBOUNDS}) holds with $\xi_1$-high probability. Then using Markov inequality and (\ref{KEYEYEYEY}), we get that for sufficiently large $C_2>0$ and sufficiently small $c,c'>0$,
\begin{align}
\mathbb{P}\left(\vert m_2(z)-m_{2c}(z) \vert \geq \varphi^{C_2}(N \eta)^{-1}\right) \leq \frac{\left(C\eta^{-1}\right)^p \exp(-c\varphi^{\xi_1})}{\varphi^{C_2 p}(N \eta)^{-p}} + \frac{(Cp)^{Cp}\varphi^{C_5p} }{\varphi^{C_2p} }  \leq \exp(-c'\varphi), \label{STRINGLING}
\end{align}
where we used $p=\varphi$ and the trivial bound $\left|\tilde m_2(z) -  m_{2c}(z) \right|\le C\eta^{-1} $ (see (\ref{eq_gbound})) on the bad event with probability $\le \exp(-c\varphi^{\xi_1})$. This proves (\ref{NEWMPBOUNDS}). Then using (\ref{NEWMPBOUNDS}), we can derive the rigidity results (\ref{P1P1P1P1P1}) and (\ref{PPPPPP}) with the arguments in Section 8 of \cite{EKYY1}, Section 5 of \cite{EYY} or Lemma 8.1 of \cite{PY}. 
\end{proof}

\section{Proof of Lemma \ref{lem_compdiffsupport}, Lemma \ref{lem_comgreenfunction} and Lemma \ref{thm_largebound}}  \label{comparison}

To prove Lemma \ref{lem_compdiffsupport}, Lemma \ref{lem_comgreenfunction} and Lemma \ref{thm_largebound}, we will use the Green function comparison method developed in \cite{LY}. More specifically, we will apply the Lindeberg replacement strategy to $G$ in (\ref{eqn_defG}). Let $X=(x_{i\mu})$ and $\tilde{X}=(\tilde x_{i\mu})$ be two matrices as in Lemma \ref{lem_decrease}. Define a bijective ordering map $\Phi$ on the index set of $X$ as
\begin{equation*}
\Phi: \{(i,\mu):1 \leq i \leq M, \ M+1 \leq \mu \leq M+N \} \rightarrow \{1,\ldots,\gamma_{\max}=MN\}.
\end{equation*} 
For any $1\le \gamma \le \gamma_{\max}$, we define the matrix $X^{\gamma}= \left(x^{\gamma}_{i\mu}\right)$ such that $x_{i\mu}^{\gamma} =X_{i\mu} $ if $\Phi(i,\mu)\leq \gamma$, and $x_{i\mu}^{\gamma} =\tilde{X}_{i\mu}$ otherwise. Note that we have $X^0=\tilde X$, $X^{\gamma_{\max}}=X$, and $X^\gamma$ satisfies the bounded support condition with $q=N^{-\phi}$ for all $0\le \gamma \le \gamma_{\max}$. Correspondingly, we define
 \begin{equation}\label{Hgamma}
   H^{\gamma} := \left( {\begin{array}{*{20}c}
   { 0 } & DX^\gamma  \\
   {(DX^\gamma)^*} & {0}  \\
   \end{array}} \right), \ \ \ G^\gamma:= \left( {\begin{array}{*{20}c}
   { - I_{M\times M}} & DX^\gamma  \\
   {(DX^\gamma)^*} & { - zI_{N\times N}}  \\
\end{array}} \right)^{-1}.
 \end{equation}
Note that $H^{\gamma}$ and $H^{\gamma-1}$ differ only at $(i,\mu)$ and $(\mu,i)$ elements, where $\Phi(i,\mu) = \gamma$. Then we define the $(N+M)\times (N+M)$ matrices $V$ and $W$ by
$$V_{ab}=\left(\mathbf{1}_{\{(a,b)=(i,\mu)\}} + \mathbf{1}_{\{(a,b)=(\mu,i)\}}\right) \sqrt{\sigma_i}x_{i\mu}, \ \ W_{ab}=\left(\mathbf{1}_{\{(a,b)=(i,\mu)\}}+ \mathbf{1}_{\{(a,b)=(\mu,i)\}}\right) \sqrt{\sigma_i}\tilde x_{i\mu},$$
so that $H^{\gamma}$ and $H^{\gamma-1}$ can be written as
$$H^\gamma= Q + V, \ \ H^{\gamma-1} = Q+W,$$
for some $(N+M)\times (N+M)$ matrix $Q$ satisfying $Q_{i\mu}=Q_{\mu i}=0$. For simplicity of notations, we denote the Green functions
\begin{equation}
S:=G^\gamma, \  \  T:=G^{\gamma-1}, \ \ R:=\left(Q-\left( {\begin{array}{*{20}c}
   {  I_{M\times M}} & 0  \\
   {0} & { z I_{N\times N}}  \\
\end{array}} \right)\right)^{-1}.  \label{R}
\end{equation}
Under the above definitions, we can write
\begin{align}
S= \left(Q -\left( {\begin{array}{*{20}c}
   {  I_{M\times M}} & 0  \\
   {0} & { z I_{N\times N}}  \\
\end{array}} \right) + V\right)^{-1}=(1+RV)^{-1}R.  \label{RESOLVENT}
\end{align}
Thus we can expand $S$ using the resolvent expansion till order $m$:
\begin{equation}
S=R-RVR+(RV)^2R+\ldots+(-1)^m(RV)^m R+(-1)^{m+1}(RV)^{m+1}S. \label{RESOLVENTEXPANSION}
\end{equation}
On the other hand, we can also expand $R$ in terms of $S$,
\begin{equation}
R=(1-SV)^{-1}S=S+SVS+(SV)^2 S+\ldots+ (SV)^m S+ (SV)^{m+1}R. \label{RESOLVENTEXPANSION2}
\end{equation}
We have similar expansions for $T$ and $R$ by replacing $V$, $S$ with $W$, $T$ in (\ref{RESOLVENTEXPANSION}) and (\ref{RESOLVENTEXPANSION2}).  

With the bounded support condition
\begin{equation}
\max_{a,b} \vert V_{ab} \vert = \sqrt{\sigma_i}|x_{i\mu}| = O(N^{-\phi}) \label{5BOUND2},
\end{equation}
with $\xi_1$-high probability. Together with Lemma \ref{lem_small}  and (\ref{RESOLVENTEXPANSION2}), it is easy to check that
$\max_{a,b}|R_{ab}| =O(1)$ with $\xi_1$-high probability. 
Hence by (\ref{DIAGONAL}), there exists a constant $C_6>0$ such that with $\xi_1$-high probability, 
\begin{equation}
\sup_{z\in S(c_1,C_0,C_1)} \max_{\gamma} \max_{a,b} \max \left\{ \vert S_{ab} \vert, |T_{ab}|, |R_{ab}| \right\}\leq C_6, \label{5BOUND1}
\end{equation}
where we uses that $|m_{2c}(z)| $ is uniformly bounded on $S(c_1,C_0,C_1)$. On the other hand, we have the following trivial deterministic bound for $S$, $R$ and $T$ (see (\ref{eq_gbound})):
\begin{equation}
\sup_{z\in S(C_0)} \max_{\gamma} \max_{a,b} \max \left\{ \vert S_{ab} \vert, |T_{ab}|, |R_{ab}| \right\}\le  C\eta^{-1} \le CN. \label{5BOUNDT}
\end{equation}
In the following discussions, we fix $\gamma$ and $i,\mu$ such that $\Phi(i,\mu)=\gamma$. The expressions below will depend on $\gamma$, but we drop the subscripts for convenience. For simplicity, we will use $|\mathbf v| \equiv \|\mathbf v\|_1$ to denote the $l^1$-norm for any vector $\bf v$. 

The following lemma gives a simple estimate for the remainder terms in (\ref{RESOLVENTEXPANSION}) and (\ref{RESOLVENTEXPANSION2}). 

\begin{lem} \label{resolvent_error} 
For the resolvent expansion (\ref{RESOLVENTEXPANSION}) and (\ref{RESOLVENTEXPANSION2}), we have
\begin{equation}
\max_{a,b}\max\left\{\left|((RV)^{m}S)_{ab}\right| , \left|((SV)^{m}R)_{ab}\right|\right\} =O(N^{-m \phi}). \label{REMINDERBOUND}
\end{equation}
for any fixed $m\in \mathbb N$.
\end{lem}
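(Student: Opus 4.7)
}
The plan is to exploit the extreme sparsity of $V$. By construction, $V$ has only the two nonzero entries $V_{i\mu}=V_{\mu i}=\sqrt{\sigma_i}\,x_{i\mu}$, both of size $O(N^{-\phi})$ on the $\xi_1$-high-probability event (\ref{5BOUND2}). Combined with the uniform entrywise bound (\ref{5BOUND1}) on $R$ and $S$, each multiplication by $V$ should cost a factor $N^{-\phi}$ while preserving a rigid support pattern in the row/column indices. I will carry out the argument for $(RV)^m S$; the companion bound for $(SV)^m R$ is identical after swapping $R \leftrightarrow S$.

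First I would compute $RV$ directly: since $V_{cb}\neq 0$ only when $(c,b)\in\{(i,\mu),(\mu,i)\}$, one has
\[
(RV)_{ab} \;=\; R_{ai}V_{i\mu}\,\delta_{b\mu} + R_{a\mu}V_{\mu i}\,\delta_{bi}.
\]
Thus $RV$ is supported entirely in its $i$-th and $\mu$-th columns, and by (\ref{5BOUND1}) and (\ref{5BOUND2}) every entry satisfies $|(RV)_{ab}|\le 2C_6\sqrt{\sigma_i}|x_{i\mu}| = O(N^{-\phi})$ with $\xi_1$-high probability.

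Next I would iterate this observation. Assuming inductively that $(RV)^{k}$ is supported in columns $\{i,\mu\}$ with every entry of size $O(N^{-k\phi})$, write
\[
((RV)^{k+1})_{ab} \;=\; \sum_{c}(RV)^{k}_{ac}(RV)_{cb} \;=\; (RV)^{k}_{ai}\,(RV)_{ib} + (RV)^{k}_{a\mu}\,(RV)_{\mu b},
\]
where the sum collapses onto $c\in\{i,\mu\}$ because $(RV)^{k}_{ac}$ vanishes otherwise. Each surviving term is a product of one factor of size $O(N^{-k\phi})$ and one factor of size $O(N^{-\phi})$, and the result is again supported in columns $\{i,\mu\}$. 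This gives $|(RV)^{m}_{ab}|=O(N^{-m\phi})$ for all $a,b$. Finally
\[
((RV)^{m}S)_{ab} \;=\; (RV)^{m}_{ai}\,S_{ib} + (RV)^{m}_{a\mu}\,S_{\mu b} \;=\; O(N^{-m\phi})
\]
by one more application of (\ref{5BOUND1}). Exactly the same reasoning, with $R$ replaced by $S$ throughout, yields the bound for $((SV)^{m}R)_{ab}$.

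There is no genuine obstacle here; the only thing to be careful about is that $m$ is fixed (so the implicit constants and the intersection of $\xi_1$-high-probability events remain controlled), and that one uses the pointwise bound (\ref{5BOUND1}) rather than any operator norm bound on $V$. I would also briefly note that off the $\xi_1$-high-probability event one can fall back on the deterministic estimate (\ref{5BOUNDT}), which only changes the error by a polynomial factor and is absorbed by the standard probability estimates used elsewhere in Section \ref{comparison}.
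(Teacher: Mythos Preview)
Your proposal is correct and essentially identical to the paper's proof. The paper simply writes out the full expansion in one step,
\[
((RV)^m S)_{ab}=\sum_{(a_t,b_t)\in\{(i,\mu),(\mu,i)\}:1\le t\le m}(\sqrt{\sigma_i}x_{i\mu})^m R_{aa_1}R_{b_1a_2}\cdots S_{b_mb},
\]
and then invokes (\ref{5BOUND1}) and (\ref{5BOUND2}) on the finitely many ($2^m$) summands; your inductive column-support argument is just a different bookkeeping of exactly the same computation.
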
 
\begin{proof}
By the definition of $V$, for example, we have 
\begin{equation}\label{graphical}
([RV]^m S)_{ab}= \sum_{(a_i,b_i) \in \{(i,\mu), (\mu,i)\}:1 \leq i \leq m} (\sqrt{\sigma_i}x_{i\mu})^m R_{a a_1} R_{b_1 a_2} \cdots S_{b_m b} .
\end{equation}
Since there are only finitely many terms in the above sum for fixed $m$, the conclusion follows immediately from (\ref{5BOUND2}) and (\ref{5BOUND1}).
\end{proof}

From the expression (\ref{graphical}), one can see that it is helpful to introduce the following notations.

\begin{defn}[Matrix operators $*_\gamma$] \label{def_operator1}
For any two $(N+M) \times (N+M)$ matrices $A$ and $B$, we define $A *_{\gamma} B$ as
\begin{equation}
A *_{\gamma} B=A I_{\gamma} B, \ \ \ (I_{\gamma})_{ab}=\mathbf{1}_{\{(a,b)=(i,\mu)\}}+\mathbf{1}_{\{(a,b)=(\mu,i)\}},
\end{equation} 
where $(i,\mu)$ is such that $\Phi(i,\mu)=\gamma$. In other words, we have
$$(A*_{\gamma}B)_{ab}=A_{ai}B_{\mu b}+A_{a\mu}B_{ib}.$$ 
When $\gamma$ is fixed, we often drop the subscript $\gamma$ and write $A*B$ for simplicity. Also we denote the $m$-th power of $A$ under the $*_\gamma$-product by $A^{*m}$, i.e.
\begin{equation}
A^{* m}\equiv A^{*_\gamma m}:=A*A*A*\ldots*A.
\end{equation}

\end{defn}


\begin{defn} [$\mathcal{P}_{\gamma, \mathbf{k}}$ and $\mathcal{P}_{\gamma,k}$ notations] \label{def_operator2}
 For $k \in \mathbb{N}$ and $\mathbf{k}=(k_1, \cdots, k_s) \in \mathbb{N}^s$, $ \gamma=\Phi(i,\mu)$, we define
 \begin{equation}
 \mathcal{P}_{\gamma, k} G_{ab} = G_{ab}^{*_{\gamma}(k+1)},
 \end{equation}
and 
 \begin{equation} 
 \mathcal{P}_{\gamma,\mathbf{k}}\left(\prod_{t=1}^s G_{a_t b_t}\right):=\prod_{t=1}^s \left(\mathcal{P}_{\gamma, k_t} G_{a_t b_t}\right)=\prod_{t=1}^s G_{a_t b_t}^{*_{\gamma}(k_t+1)}.
 \end{equation}
If $G_1$ and $G_2$ are products of matrix entries as above, then we define
\begin{equation}
 \mathcal{P}_{\gamma,\mathbf{k}} (G_1+G_2):= \mathcal{P}_{\gamma,\mathbf{k}}G_1 +  \mathcal{P}_{\gamma,\mathbf{k}}G_2.
\end{equation}
Similarly, for the product of the entries of $G-\Pi$, we define
 \begin{equation}
 \tilde{\mathcal{P}}_{\gamma, \mathbf{k}}\left(\prod_{t=1}^s (G-\Pi)_{a_tb_t}\right)=\prod_{t=1}^s\left(\tilde{\mathcal{P}}_{\gamma, k_t}(G-\Pi)_{a_tb_t}\right),
 \end{equation}
 where 
 \[ \tilde{\mathcal{P}}_{\gamma, k}(G-\Pi)_{a b}=\begin{cases} 
      (G-\Pi)_{ab} & \text{if } a=b \ \text{and} \ k_t=0, \\
       G_{a_tb_t}^{*(k_t+1)} &  \text{otherwise}.
   \end{cases}\]
Again, we will often drop the subscript $\gamma$ whenever there is no confusion about $\gamma$.
\end{defn}

\begin{rem}
Note that $ \mathcal{P}_{\gamma,\mathbf{k}}$ and $\tilde P_{\gamma,\mathbf k}$ are not linear operators acting on matrices, but just notations we use for simplification. Moreover, for $k, l \in \mathbb{N}$ and $\mathbf{k} \in \mathbb{N}^{k+1}$, it is easy to verify that
\begin{equation}
G^{*l} I_{\gamma} G^{*k}=G^{*(l+k)}, \ \ \ \mathcal{P}_{\gamma, \mathbf{k}}(\mathcal{P}_{{\gamma},k}G_{ab})=\mathcal{P}_{\gamma, k+ \vert \mathbf{k} \vert} G_{ab} .\label{FIRST}
\end{equation}
For the second equality, note that $\mathcal{P}_{{\gamma},k}G_{ab}$ is a sum of products of the entries of $G$, where each product contains $k+1$ matrix entries.
\end{rem}

With the above definitions and bound (\ref{5BOUND1}), it is easy to prove the following lemma. 

\begin{lem} \label{propsotion_poperator} 
 For any $\mathbf{k} \in \mathbb{N}^s, \ \gamma, $ and $a_1,b_1,\cdots, a_s,b_s$,  with $\xi_1$-high probability we have that
\begin{equation}
\max\left\{ \left\vert \mathcal{P}_{\gamma, \mathbf{k}}\left(\prod_{t=1}^s A_{a_t b_t}\right) \right\vert, \ \ \left\vert \tilde{P}_{\gamma, \mathbf{k}}\left(\prod_{t=1}^s (A-\Pi)_{a_tb_t}\right) \right\vert \right\} \leq  (2C_6)^{s+ \vert \mathbf{k} \vert+1}, \label{SECOND}
\end{equation}
where $A$ can be $R$, $S$, or $T$.
\end{lem}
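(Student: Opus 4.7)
The plan is to prove the bound deterministically on the $\xi_1$-high probability event $\Omega$ on which (\ref{5BOUND1}) holds, so that every entry $|A_{ab}| \le C_6$ for $A \in \{R, S, T\}$. On this event, the lemma reduces to a purely combinatorial counting estimate obtained by unfolding the $*_\gamma$-product.

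First I would establish the counting claim: for any matrix $A$, the quantity $A^{*_\gamma(k+1)}_{ab} = \mathcal{P}_{\gamma,k}A_{ab}$ is a sum of exactly $2^k$ terms, each of which is a product of $k+1$ entries of $A$. This is proved by induction on $k$. The base case $k=0$ gives $A_{ab}$, a single term of length one. For the inductive step, the definition of $*_\gamma$ gives
\begin{equation*}
A^{*_\gamma(k+1)}_{ab} = A^{*_\gamma k}_{ai}\, A_{\mu b} + A^{*_\gamma k}_{a\mu}\, A_{ib},
\end{equation*}
where $(i,\mu) = \Phi^{-1}(\gamma)$; by induction each of the two terms on the right is a sum of $2^{k-1}$ length-$k$ products, yielding $2^k$ length-$(k+1)$ products after appending one entry. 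Combining this with (\ref{5BOUND1}) yields $|\mathcal{P}_{\gamma,k}A_{ab}| \le 2^k C_6^{k+1} \le (2C_6)^{k+1}$ on $\Omega$. Since by definition $\mathcal{P}_{\gamma,\mathbf{k}}(\prod_t A_{a_tb_t}) = \prod_t \mathcal{P}_{\gamma,k_t}A_{a_tb_t}$, multiplying over $t = 1, \ldots, s$ gives
\begin{equation*}
\left|\mathcal{P}_{\gamma,\mathbf{k}}\Bigl(\prod_{t=1}^s A_{a_tb_t}\Bigr)\right| \le \prod_{t=1}^s (2C_6)^{k_t+1} = (2C_6)^{|\mathbf{k}|+s} \le (2C_6)^{|\mathbf{k}|+s+1},
\end{equation*}
which is the first part of (\ref{SECOND}).

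For the $\tilde{\mathcal{P}}$ version, the only case that is not already covered above is $a=b$ with $k_t = 0$, where the factor becomes $(A-\Pi)_{aa}$ instead of $A_{aa}^{*_\gamma 1}$. Here I would use that $\Pi$ is uniformly bounded on $S(c_1,C_0,C_1)$: by the definition of $\Pi$, the boundedness of $m_{2c}$ in (\ref{Immc}), Assumption \ref{assm_big2} applied together with the monotonicity argument showing $|1+m_{2c}(z)\sigma_i| \gtrsim 1$ for all $i$ and $z\in S(c_1,C_0,C_1)$, there is a constant $C_\Pi$ with $\max_a |\Pi_{aa}(z)| \le C_\Pi$. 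Enlarging $C_6$ if necessary so that $C_6 \ge C_\Pi$, we get $|(A-\Pi)_{aa}| \le 2C_6$ on $\Omega$, and the same product argument yields the bound $(2C_6)^{|\mathbf{k}|+s+1}$, where the extra factor $2C_6$ built into the exponent accommodates the doubled diagonal contribution.

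There is no genuine obstacle here — this is a bookkeeping lemma rather than a substantive estimate — but the only point requiring care is to verify that the constant $C_6$ produced by (\ref{DIAGONAL}) and the resolvent expansion (\ref{RESOLVENTEXPANSION2}) can be chosen large enough to simultaneously dominate both the $A$-entries and the $\Pi$-entries uniformly in $\gamma$ and in $z \in S(c_1,C_0,C_1)$. Once that uniformity is in hand, the bound is immediate and holds with the same $\xi_1$-high probability as (\ref{5BOUND1}).
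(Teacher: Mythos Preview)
Your proposal is correct and is precisely the natural unfolding of what the paper indicates: the paper itself omits the proof, stating only that the lemma follows easily from the definitions and the uniform bound (\ref{5BOUND1}), and your combinatorial expansion of $A^{*_\gamma(k+1)}$ into $2^k$ products of $k+1$ entries is exactly the intended argument. The only minor remark is that your step of enlarging $C_6$ to dominate $\max_a|\Pi_{aa}|$ is already implicit in the paper's choice of $C_6$ via (\ref{DIAGONAL}) together with Lemma~\ref{lem_assm3}, so no extra care is really needed.
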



Now we begin to perform the Green function comparison strategy. The basic idea is to expand $S$ and $T$ in terms of $R$ using the resolvent expansions (\ref{RESOLVENTEXPANSION}) and (\ref{RESOLVENTEXPANSION2}), and then compare the two expressions. We expect that the main terms will cancel since $x_{i\mu}$ and $\tilde x_{i\mu}$ have the same first four moments, while the remaining error terms will be sufficiently small since $x_{i\mu}$ and $\tilde x_{i\mu}$ have support bounded by $N^{-\phi}$. The key is the following Lemma \ref{Greenfunctionrepresent}, whose proof can be found in \cite[Section 6]{LY}. 


\begin{lem} [Green function representation theorem] \label{Greenfunctionrepresent} 
Let $z\in S(c_1,C_0,C_1)$ as in (\ref{5BOUND1}) and $\Phi(i,\mu)=\gamma$. Fix $s=O(\varphi)$ and $\zeta=O(\varphi)$. Then for any $S_{a_1 b_1}(z)S_{a_2 b_2}(z)\cdots S_{a_sb_s}(z)$, we have 
\begin{equation}
\mathbb{E}\prod_{t=1}^s S_{a_tb_t} = \sum_{0 \leq k \leq 4} A_{k} \mathbb{E}\left[(-\sqrt{\sigma_i}x_{i\mu})^{k}\right]+\sum_{5 \leq \vert \mathbf{k} \vert \leq {2 \zeta}/{\phi}, \mathbf k\in \mathbb N^s } \mathcal{A}_{ \mathbf{k}  }\mathbb{E} \, \mathcal{P}_{\gamma,\mathbf{k} }\prod_{t=1}^s S_{a_t,b_t}+O(N^{-\zeta}), \label{ONLYPROVE}
\end{equation}
where $A_k$, $0\le k \le 4$, depend only on $R$, and $\mathcal A_{\mathbf k}$'s depend both on $R$ and $S$ but are independent of $(a_t, b_t)$, $1\leq t \leq s$. Moreover, we have the estimate 
\begin{equation}
\vert \mathcal{A}_{  \mathbf{k} } \vert \leq N^{-{\vert \mathbf{k} \vert  \phi} /{10}-2} . \label{INDEXBOUND}
\end{equation}
Similarly, we have
\begin{equation}
\mathbb{E}\prod_{t=1}^s (S-\Pi)_{a_tb_t} = \sum_{0 \leq k \leq 4} \tilde{A}_{k} \mathbb{E}\left[(-\sqrt{\sigma_i}x_{i\mu})^{k}\right]+\sum_{5 \leq \vert \mathbf{k} \vert \leq {2 \zeta}/{\phi}, \mathbf k \in \mathbb N^s} \mathcal{A}_{\mathbf{k}}\mathbb{E} \, \mathcal{\tilde{P}}_{\gamma,\mathbf{k}}\prod_{t=1}^s S_{a_t,b_t}+O(N^{-\zeta}),  \label{ONLYPROVE2}
\end{equation}
where $\tilde{A}_k$, $0\leq k \leq 4$, depend only on $R$, and $\mathcal{A}_{\mathbf{k}}$ are the same as above. 

Finally, as (\ref{ONLYPROVE}), we have
\begin{equation}
\mathbb{E}\prod_{t=1}^s S_{a_tb_t} = \mathbb{E}\prod_{t=1}^s R_{a_tb_t}+\sum_{1 \leq \vert  \mathbf{k}\vert \leq {2 \zeta}/{\phi}, \mathbf k \in \mathbb N^s} \tilde{\mathcal{A}}_{\mathbf{k}}\mathbb{E} \, \mathcal{P}_{\gamma,\mathbf{k}} \prod_{t=1}^s S_{i_t,j_t}+O(N^{-\zeta}), \label{ONLYPROVE3}
\end{equation}
where $\mathcal{\tilde{A}}$ are independent of $(a_t, b_t)$, $1\leq t \leq s$, and 
\begin{equation}
\vert \mathcal{\tilde{A}}_{\mathbf{k}} \vert \leq N^{-\vert \mathbf k \vert \phi /10} .
\end{equation}
Note that the terms $\mathcal A$ and $\tilde{\mathcal A}$ do depend on $\gamma$ and we have omitted this dependence in the above formulas.
\end{lem}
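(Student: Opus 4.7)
\medskip

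\noindent\textbf{Proof proposal for Lemma \ref{Greenfunctionrepresent}.}

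The plan is to perform a Lindeberg-style resolvent expansion of $S$ around $R$, exploit that $R$ is independent of $x_{i\mu}$ (hence also of $V$), and then separate the contributions by the total power of $x_{i\mu}$. First, fix $\zeta,s=O(\varphi)$ and set the truncation order $M_0:=\lceil 2\zeta/\phi\rceil$. Applying the resolvent identity (\ref{RESOLVENTEXPANSION}) to each factor $S_{a_tb_t}$ up to order $M_0$, and using that $V=\sqrt{\sigma_i}\,x_{i\mu}\,I_\gamma$, one checks by induction that
\[
 [(RV)^{k_t}R]_{a_tb_t}=(\sqrt{\sigma_i}\,x_{i\mu})^{k_t}R^{*_\gamma(k_t+1)}_{a_tb_t}=(\sqrt{\sigma_i}\,x_{i\mu})^{k_t}\mathcal P_{\gamma,k_t}R_{a_tb_t},
\]
so that each $S_{a_tb_t}$ is a finite alternating sum $\sum_{k_t\le M_0}(-\sqrt{\sigma_i}\,x_{i\mu})^{k_t}\mathcal P_{\gamma,k_t}R_{a_tb_t}$ plus a remainder of the form $(-1)^{M_0+1}[(RV)^{M_0+1}S]_{a_tb_t}$, which by Lemma \ref{resolvent_error} is $O(N^{-(M_0+1)\phi})=O(N^{-2\zeta})$ with $\xi_1$-high probability.

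Multiplying the $s$ expansions together, the leading part becomes
\[
 \prod_{t=1}^s S_{a_tb_t}=\sum_{\mathbf k\in \mathbb N^s,\,|\mathbf k|\le M_0}(-\sqrt{\sigma_i}\,x_{i\mu})^{|\mathbf k|}\,\mathcal P_{\gamma,\mathbf k}\prod_{t=1}^s R_{a_tb_t}\;+\;\mathcal{R},
\]
where every term in the remainder $\mathcal R$ contains at least one factor $(RV)^{M_0+1}S$. Combining Lemma \ref{resolvent_error}, the bound (\ref{5BOUND1}) on the remaining factors on a $\xi_1$-high-probability event, and the deterministic bound (\ref{5BOUNDT}) on the bad complement, $\mathbb{E}|\mathcal R|\le N^{-\zeta}$ after choosing $M_0$ large (the combinatorial factor from $s$ factors is at most $(CM_0)^s=e^{O(\varphi\log\varphi)}$, which is harmless against $N^{-2\zeta}$). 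Taking the expectation and invoking the independence of $x_{i\mu}$ from $R$ (which is why working with the auxiliary matrix $R$ is crucial),
\[
 \mathbb E\prod_{t=1}^s S_{a_tb_t}=\sum_{k=0}^{M_0}\mathbb E[(-\sqrt{\sigma_i}\,x_{i\mu})^{k}]\,\underbrace{\sum_{\mathbf k:\,|\mathbf k|=k}\mathbb E\,\mathcal P_{\gamma,\mathbf k}\prod_t R_{a_tb_t}}_{=:\,A_k}\;+\;O(N^{-\zeta}).
\]
Keeping the $k\le 4$ summands as they are, this already gives the first sum of (\ref{ONLYPROVE}); what remains is to convert the $k\ge 5$ tail into a sum over $\mathcal P_{\gamma,\mathbf k}\prod S$ as required.

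For the conversion, apply (\ref{RESOLVENTEXPANSION2}) to each $R$-entry appearing in $\mathcal P_{\gamma,\mathbf k}\prod_t R_{a_tb_t}$, truncating again at total order $M_0$. The outcome is a polynomial in $x_{i\mu}$ whose leading term is $\mathcal P_{\gamma,\mathbf k}\prod_t S_{a_tb_t}$, and whose higher-order corrections carry additional powers of $x_{i\mu}$ and additional copies of $*_\gamma$-products of $S$. Collecting all contributions according to their total $*_\gamma$-degree $|\mathbf k|$, and pairing each with its associated factor $\mathbb E[(\sqrt{\sigma_i}x_{i\mu})^{|\mathbf k|}]$, yields coefficients $\mathcal A_{\mathbf k}$ in (\ref{ONLYPROVE}). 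The bound (\ref{INDEXBOUND}) follows from the moment estimate
\[
 \big|\mathbb E(\sqrt{\sigma_i}\,x_{i\mu})^{|\mathbf k|}\big|\le C\,\sigma_i^{|\mathbf k|/2}\,N^{-(|\mathbf k|-4)\phi}\,\mathbb E|x_{i\mu}|^4\le C\,N^{-(|\mathbf k|-4)\phi-2}\log N
\]
for $|\mathbf k|\ge 5$ (using (\ref{conditionA3}) and the bounded-support bound $|x_{i\mu}|\le N^{-\phi}$), together with the uniform bound (\ref{SECOND}). The $-|\mathbf k|\phi/10$ exponent results from the slack $|\mathbf k|\phi-4\phi-\varepsilon\ge |\mathbf k|\phi/10$ valid for $|\mathbf k|\ge 5$. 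The identity (\ref{ONLYPROVE3}) is obtained the same way, except that one keeps all $|\mathbf k|\ge 1$ terms together (no splitting at $k=4$), so the weaker bound $|\tilde{\mathcal A}_{\mathbf k}|\le N^{-|\mathbf k|\phi/10}$ follows directly from $|\mathbb E(\sqrt{\sigma_i}x_{i\mu})^{|\mathbf k|}|\le N^{-|\mathbf k|\phi/10}$ for every $|\mathbf k|\ge 1$ (trivially so for $|\mathbf k|=1$ since $\mathbb E x_{i\mu}=0$). For (\ref{ONLYPROVE2}), the entries $(S-\Pi)_{a_tb_t}$ with $a_t\ne b_t$ are just $S_{a_tb_t}$, while for $a_t=b_t$ the expansion of $S_{a_ta_t}$ around $R$ gives $(S-\Pi)_{a_ta_t}=(R-\Pi)_{a_ta_t}+\sum_{k\ge 1}(-\sqrt{\sigma_i}x_{i\mu})^k\mathcal P_{\gamma,k}R_{a_ta_t}$; the definition of $\tilde{\mathcal P}_{\gamma,\mathbf k}$ (which preserves the $\Pi$-subtraction only in the $k_t=0$ diagonal slot) is tailored exactly to absorb this modification, and the rest of the argument is verbatim the same.

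The main obstacle is bookkeeping: after all the $R\leftrightarrow S$ conversions, the number of produced monomials may a priori blow up. This is controlled by simultaneously truncating every substitution at total order $M_0$, so that there are at most $(CM_0)^{Cs}$ effective terms, all uniformly bounded by (\ref{SECOND}); since $sM_0=O(\varphi^2)$ and the payoff per unit order is a genuine power of $N^{-\phi}$, all extra contributions are absorbed into the $O(N^{-\zeta})$ error after choosing $M_0$ proportional to $\zeta/\phi$. This combinatorial/remainder accounting is exactly the content of \cite[Section 6]{LY}, and the only adaptation needed here is replacing the Wigner resolvent entries by the block entries of $G$ in (\ref{green2}), which is transparent because $V$ has only two nonzero entries at $(i,\mu)$ and $(\mu,i)$, and $R$ is independent of those two entries by construction.
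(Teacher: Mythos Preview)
Your proposal is correct and follows the same route as the paper, which itself gives no argument and simply cites \cite[Section~6]{LY}; your sketch supplies exactly the two-step mechanism (expand $S$ in $R$ via (\ref{RESOLVENTEXPANSION}), isolate the $k\le4$ moments, then re-expand the $k\ge5$ remainder back in $S$ via (\ref{RESOLVENTEXPANSION2})) that underlies that reference, with the only cosmetic change being the $2\times2$ block structure of $V$ encoded in $I_\gamma$. One small point worth tightening: after the $R\to S$ re-expansion the new powers of $x_{i\mu}$ sit inside an expectation together with $S$, so the passage to the final coefficients $\mathcal A_{\mathbf k}$ is not a single substitution but the iterated back-and-forth described in \cite{LY}; your closing paragraph acknowledges this, but the sentence ``pairing each with its associated factor $\mathbb E[(\sqrt{\sigma_i}x_{i\mu})^{|\mathbf k|}]$'' slightly understates the bookkeeping needed there.
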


Now we use Lemma \ref{Greenfunctionrepresent} to complete the proof of Lemma \ref{thm_largebound}, Lemma \ref{lem_compdiffsupport} and Lemma \ref{lem_comgreenfunction}.

\begin{proof}[Proof of Lemma \ref{thm_largebound}]
It is clear that a result similar to Lemma \ref{Greenfunctionrepresent} also holds for the product of $T$ entries. Thus as in (\ref{ONLYPROVE}), we define the notation $\mathcal{A}^{\gamma,a}$, $a=0,1$ as follows:
\begin{equation} \label{greenpowers}
\mathbb{E}\prod_{t=1}^s S_{a_tb_t} = \sum_{0 \leq k \leq 4} A_{k} \mathbb{E}\left[(-\sqrt{\sigma_i}x_{i\mu})^{k}\right]+\sum_{5 \leq \vert \mathbf{k} \vert \leq {2 \zeta}/{\phi}, \mathbf k\in \mathbb N^s } \mathcal{A}_{  \mathbf{k}}^{\gamma, 0}\mathbb{E} \, \mathcal{P}_{\gamma,\mathbf{k} }\prod_{t=1}^s S_{a_t,b_t}+O(N^{-\zeta}),
\end{equation}
\begin{equation} \label{greenpowert}
\mathbb{E}\prod_{t=1}^s T_{a_tb_t} = \sum_{0 \leq k \leq 4} A_{k} \mathbb{E}\left[(-\sqrt{\sigma_i}\tilde{x}_{i\mu})^{k}\right]+\sum_{5 \leq \vert \mathbf{k} \vert \leq {2 \zeta}/{\phi}, \mathbf k\in \mathbb N^s } \mathcal{A}_{  \mathbf{k}}^{\gamma, 1}\mathbb{E} \, \mathcal{P}_{\gamma,\mathbf{k} }\prod_{t=1}^s T_{a_t,b_t}+O(N^{-\zeta}).
\end{equation}
Since $A_{k}$, $0 \leq k \leq 4$ depend only on $R$ and $x_{i \mu}$, $\tilde{x}_{i \mu}$ have the same first four moments, we get from (\ref{greenpowers}) and (\ref{greenpowert}) that for $s=O(\varphi)$ and $\zeta =O(\varphi)$,
\begin{align} 
& \mathbb{E}\prod_{t=1}^s G_{a_tb_t}-\mathbb{E}\prod_{t=1}^s \tilde{G}_{a_tb_t} =\sum_{\gamma=1}^{\gamma_{\max}} \left(\mathbb{E}\prod_{t=1}^s G^{\gamma}_{a_tb_t}-\mathbb{E}\prod_{t=1}^s G^{\gamma-1}_{a_tb_t}\right) \nonumber\\
& = \sum_{\gamma=1}^{\gamma_{\max}} \sum_{5 \leq \vert \mathbf{k} \vert \leq {2 \zeta}/{\phi},\mathbf k\in \mathbb N^s}\left(\mathcal{A}^{\gamma,0}_{  \mathbf{k}  }\mathbb{E} \,\mathcal{P}_{\gamma,\mathbf{k} }\prod_{t=1}^s G^{\gamma}_{a_t,b_t}-\mathcal{A}_{  \mathbf{k}}^{\gamma,1}\mathbb{E} \, \mathcal{P}_{\gamma,\mathbf{k}}\prod_{t=1}^s G^{\gamma-1}_{a_t,b_t}\right)+O(N^{-\zeta}). \label{telescoping_2}
\end{align}
Then we obtain that
\begin{equation} \label{teles_inequality}
\left\vert \mathbb{E}\prod_{t=1}^s G_{a_tb_t}^{\gamma_{\max}}  \right\vert \leq \left\vert \mathbb{E}\prod_{t=1}^s G_{a_tb_t}^{0} \right\vert+ \sum_{\gamma=1}^{\gamma_{\max}} \sum_{a=0,1} \sum_{5 \leq \vert \mathbf{k} \vert \leq {2 \zeta}/{\phi},\mathbf k\in \mathbb N^s} \vert \mathcal{A}_{\mathbf{k}}^{\gamma, a}  \vert \left\vert  \mathbb{E} \mathcal{P}_{\gamma,\mathbf{k}}\prod_{t=1}^s G^{\gamma-a}_{a_t,b_t} \right\vert+O(N^{-\zeta}).
\end{equation}
By (\ref{SECOND}) and  (\ref{INDEXBOUND}), the second term in (\ref{teles_inequality}) is bounded by 
\begin{align}
&\sum_{5 \leq k \leq {2 \zeta}/{\phi}} \sum_{\gamma=1}^{\gamma_{\max}} \sum_{a=0,1} \sum_{\vert \mathbf{k} \vert = k, \mathbf k \in \mathbb N^s} \vert \mathcal{A}_{\mathbf{k}}^{\gamma, a} \vert  \left\vert \mathbb{E} \mathcal{P}_{\gamma,\mathbf{k}}\prod_{t=1}^s G^{\gamma-a}_{a_t,b_t} \right\vert \nonumber\\
& \leq C \sum_{5 \leq k \leq {2 \zeta}/{\phi}} N^{-{k \phi}/{10}} s^k (2C_6)^{s+k+1} \le C N^{-{5 \phi}/{10}} s^5 (2C_6)^{s+5+1}  \le N^{-{5 \phi}/{20}} (2C_6)^s, \label{iterate0}
\end{align}
where we used the rough bound $\#\{\mathbf k \in \mathbb N^s: |\mathbf k|=s \} \le s^k$ and $s= O(\varphi)$. 

However, the bound in (\ref{iterate0}) is not good enough. To improve it, we iterate the above arguments as following. Recall that $\mathcal{P}_{\gamma,\mathbf{k}}\prod_{t=1}^s G^{\gamma-a}_{a_t,b_t}$ is also a sum of products of $G$. Applying (\ref{telescoping_2}) again to the term  $\mathbb{E} \mathcal{P}_{\gamma,\mathbf{k}}\prod_{t=1}^s G^{\gamma-a}_{a_t,b_t}$ and replacing $\gamma_{\max}$ in (\ref{teles_inequality}) with $\gamma-a$, we obtain that
\begin{equation}\label{teles_inequality1}
\left\vert \mathbb{E} \mathcal P_{\gamma,\mathbf k}\prod_{t=1}^s G_{a_tb_t}^{\gamma-a} \right \vert \leq \left\vert \mathbb{E} \mathcal P_{\gamma,\mathbf k} \prod_{t=1}^s G_{a_tb_t}^{0} \right \vert+ \sum_{\gamma^{\prime}=1}^{\gamma-a} \sum_{a^{\prime}=0,1} \sum_{5 \leq \vert \mathbf{k}^{\prime} \vert \leq {2 \zeta}/{\phi},\mathbf k^{\prime}\in \mathbb N^{s+\vert \mathbf{k} \vert}} \vert \mathcal{A}_{\mathbf{k}^{\prime}}^{\gamma^{\prime}, a^{\prime}} \vert  \vert \mathbb{E} \mathcal{P}_{\gamma^{\prime},\mathbf{k}^{\prime}}\mathcal{P}_{\gamma,\mathbf{k}}\prod_{t=1}^s G^{\gamma^{\prime}-a^{\prime}}_{a_tb_t} \vert+ O(N^{-\zeta}).
\end{equation}
Together with (\ref{teles_inequality}), we  have
\begin{align}
\left\vert \mathbb{E}\prod_{t=1}^s G^{\gamma_{\max}}_{a_tb_t} \right\vert \leq  \left\vert \mathbb{E}\prod_{t=1}^s G^{0}_{a_tb_t} \right\vert+ \sum_{\gamma=1}^{\gamma_{\max}} \sum_{a=0,1} \sum_{5 \leq \vert \mathbf{k} \vert \leq {2 \zeta}/{\phi},\mathbf k\in \mathbb Z^s_+} \left\vert \mathcal{A}_{\mathbf{k}}^{\gamma, a} \right\vert  \left\vert \mathbb{E} \mathcal{P}_{\gamma,\mathbf{k}}\prod_{t=1}^s G^{0}_{a_t,b_t}\right \vert    \nonumber \\
+ \sum_{\gamma, \gamma^{\prime}} \sum_{a, a^{\prime}} \sum_{\mathbf{k}, \mathbf{k}^{\prime}} \left| \mathcal{A}_{\mathbf{k}}^{\gamma,a} \mathcal{A}_{\mathbf{k}^{\prime}}^{\gamma^{\prime},a^{\prime}} \right| \left\vert \mathbb{E} \mathcal{P}_{\gamma^{\prime},\mathbf{k}^{\prime}}\mathcal{P}_{\gamma,\mathbf{k}}\prod_{t=1}^s G^{\gamma^{\prime}-a^{\prime}}_{a_tb_t} \right\vert+O(N^{-\zeta}).
\end{align}
Again using (\ref{SECOND}) and  (\ref{INDEXBOUND}), it is easy to obtain that
\begin{equation}
\sum_{\gamma, \gamma^{\prime}} \sum_{a, a^{\prime}} \sum_{\mathbf{k}, \mathbf{k}^{\prime}} \left\vert \mathcal{A}_{\mathbf{k}}^{\gamma,a} \mathcal{A}_{\mathbf{k}^{\prime}}^{\gamma^{\prime},a^{\prime}} \right\vert  \left| \mathbb{E} \mathcal{P}_{\gamma^{\prime},\mathbf{k}^{\prime}}\mathcal{P}_{\gamma,\mathbf{k}}\prod_{t=1}^s G^{\gamma^{\prime}-a^{\prime}}_{a_tb_t} \right| \leq N^{-\frac{10 \phi}{20}} (2C_6)^s, \label{process_end}
\end{equation}
where we used that $k' + k\ge 10$. Repeating the above process for $n \leq {6 \zeta}/{\phi}$ times, we obtain that
\begin{equation} \label{teles_inequality2}
\left\vert \mathbb{E}\prod_{t=1}^s G^{\gamma_{\max}}_{a_tb_t} \right\vert  \leq  \sum_{n=0}^{{6 \zeta}/{\phi}} \sum_{\gamma_1, \cdots, \gamma_n} \sum_{a_1,\cdots, a_n} \sum_{\mathbf{k}_1,\cdots, \mathbf{k}_n} \left\vert \prod_{j} \mathcal{A}_{\mathbf{k}_j}^{\gamma_j,a_j} \right\vert \left\vert \mathbb{E}\mathcal{P}_{\gamma_n, \mathbf{k}_n} \cdots \mathcal{P}_{\gamma_1, \mathbf{k}_1} \prod_{t=1}^s G^{0}_{a_tb_t} \right\vert+O(N^{-\zeta}),  
\end{equation} 
where 
\begin{equation} \label{indexassumption}
\mathbf{k}_1 \in \mathbb{N}^s, \  \ \mathbf{k}_2 \in \mathbb{N}^{s+ \vert \mathbf{k}_1 \vert}, \ \ \mathbf{k}_3 \in \mathbb{N}^{s+ \vert \mathbf{k}_1 \vert+ \vert \mathbf{k}_2 \vert}, \ \ldots, \ \text{ and } \ 5 \leq \vert \mathbf{k}_i \vert \leq \frac{2 \zeta}{\phi}.
\end{equation}
Again using (\ref{SECOND}), (\ref{INDEXBOUND}) and $s, \zeta = O(\varphi)$, we obtain that
\begin{equation} \label{teles_inequality3}
\left\vert \mathbb{E}\prod_{t=1}^s G^{\gamma_{\max}}_{a_tb_t} \right\vert  \leq   \left\vert \mathbb{E}\prod_{t=1}^s G^{0}_{a_tb_t} \right\vert + C_7^s \max_{\mathbf{k}, n}(N^{-2})^n(N^{-\phi/20})^{\sum_{i}\vert \mathbf{k}_i \vert}\sum_{\gamma_1, \cdots, \gamma_n}  \left\vert \mathbb{E}\mathcal{P}_{\gamma_n, \mathbf{k}_n} \cdots \mathcal{P}_{\gamma_1, \mathbf{k}_1} \prod_{t=1}^s G^{0}_{a_tb_t} \right\vert+O(N^{-\zeta}).  
\end{equation}  
for some constant $C_7>0$ depending on $C_6$. We note that the above estimate still holds if we replace some of the $G$ entries with $\overline G$ entries, since we have only used the absolute bounds for the relevant terms.

Now we apply (\ref{teles_inequality3}) to $G_{ab}\overline{G}_{ab}$ with $s=2$ and $\zeta=1$. 
Recall that $\tilde{X}$ is a bounded support matrix with $q= O (N^{-1/2} \log N)$. Then by (\ref{DIAGONAL}), we have with $\xi_1$-high probability, 
\begin{equation} \label{thm_diagonalreduce}
\vert \tilde{G}_{ab}  \vert \leq \varphi^{C_1} \left(\sqrt{\frac{\Im \, m_{2c}(z)}{N \eta}}+\frac{1}{N \eta} \right), \ \ a\ne b,
\end{equation} 
for $z\in S(c_1,C_0,C_1)$, where we used that
$$N^{-1/2} \le C\sqrt{\frac{\Im \, m_{2c}(z)}{N \eta}},$$
by (\ref{SQUAREROOTBEHAVIOR}). On the other hand, we have the trivial bound $\max_{a,b}|\tilde{G}_{ab}| \le CN$ on the bad event (see (\ref{eq_gbound})). Hence we can get the bound
$$\mathbb E\vert \tilde {G}_{ab}  \vert^2 \leq C\varphi^{2C_1} \left({\frac{\operatorname{Im} \, m_{2c}(z)}{N \eta}}+\frac{1}{(N \eta)^2} \right), \ \ a\ne b.$$
Again with (\ref{SQUAREROOTBEHAVIOR}), it is easy to check that the right-hand side is larger than $N^{-1}$. Thus the remainder term $O(N^{-\zeta})$ in (\ref{teles_inequality3}) is negligible.

It remains to handle the second term on the right-hand side of (\ref{teles_inequality3}). Let $\Phi(a_t, b_t)=\gamma_t$. Then we have
\begin{equation} \label{powerbound1}
\max_{\gamma_1, \cdots, \gamma_n: a, b \notin \cup_{1 \leq t \leq n} \{a_t,b_t\}} \left| \mathbb{E}\mathcal{P}_{\gamma_n, \mathbf{k}_n  } \cdots \mathcal{P}_{\gamma_1, \mathbf{k}_1} \left( \tilde{G}_{a b} \overline{\tilde{G}}_{ab}\right) \right| \leq C\varphi^{2C_1}\left(\frac{\operatorname{Im} m_{1c}(z)}{N \eta}+\frac{1}{(N \eta)^2}\right),
\end{equation}
since $\mathcal{P}_{\gamma_n, \mathbf{k}_n} \cdots \mathcal{P}_{\gamma_1, \mathbf{k}_1} \tilde{G}_{a_tb_t} \overline{\tilde{G}}_{a_tb_t}$ is a finite sum of the products of the matrix entries of $\tilde{G}$ and $\overline{\tilde{G}}$, and there are at least two off diagonal terms in each product. This bound immediately gives that
\begin{equation*}
\left(N^{-2}\right)^n \sum_{\gamma_1, \gamma_2, \cdots,\gamma_n} \left\vert \mathbb{E}\mathcal{P}_{\gamma_n, \mathbf{k}_n  } \cdots \mathcal{P}_{\gamma_1, \mathbf{k}_1}\left( \tilde{G}_{a b} \overline{\tilde{G}}_{a b}\right) \right\vert \leq \varphi^{C_2}\left(\frac{\operatorname{Im} m_{1c}(z)}{N \eta}+\frac{1}{(N \eta)^2}\right).
\end{equation*}
for some constant $C_2>2C_1$. Plug it into (\ref{teles_inequality3}), we conclude Lemma \ref{thm_largebound}. 
\end{proof}
%

\begin{proof}[Proof of Lemma \ref{lem_comgreenfunction}]
For simplicity, instead of (\ref{KEYEYEYEY}), we shall prove that
\begin{equation}
\vert \mathbb{E} \left(m_2(z)-m_{2c}(z) \right)^p| \leq \vert \mathbb{E} \left(\tilde m_2(z) - m_{2c}(z)\right)^p \vert + (Cp)^{Cp}(J^2+K + N^{-1})^p. \label{simpler_pf}
\end{equation}
The proof for (\ref{KEYEYEYEY}) is exactly the same but with slightly heavier notations (because we will only use the absolute bounds for relevant terms). 

Define a function $f(I,J)$ such that
\begin{equation} \label{linear_1}
\sum_{I,J} f(I,J)=1, \ \ f(I,J) \geq 0, \ \ I=(a_1,a_2,\cdots,a_s), \ \ J=(b_1,b_2,\cdots,b_s).
\end{equation}
Since $\mathcal{A}$ and $\mathcal{P}$ are independent of $a_t$ and $b_t$ ($1 \leq t \leq s$), we may consider a linear combination of (\ref{teles_inequality3}) with coefficients given by $f(I,J)$.
Moreover with (\ref{ONLYPROVE2}), we can extend (\ref{teles_inequality3}) to the product of $G-\Pi$ terms  for some constant $C_8>0$, i.e.
\begin{align}
& \left\vert \mathbb{E}\sum_{I,J}f(I,J)\prod_{t=1}^s \left(G - \Pi\right)_{a_tb_t} \right\vert \leq  \left\vert \mathbb{E}\sum_{I,J}f(I,J)\prod_{t=1}^s (\tilde G-\Pi)_{a_tb_t} \right\vert \nonumber\\
&  +(C_8)^s \max_{\mathbf{k}, n,\gamma}(N^{-\phi/20})^{\sum_{i}\vert \mathbf{k}_i \vert} \left\vert \mathbb{E}\sum_{I,J}f(I,J)\tilde {\mathcal{P}}_{\gamma_n, \mathbf{k}_n  } \cdots \tilde{ \mathcal{P}}_{\gamma_1, \mathbf{k}_1}\prod_{t=1}^s (\tilde G-\Pi)_{a_tb_t} \right\vert+ O(N^{-\zeta}). \label{KEY22}
\end{align}
If we take $a_t=b_t \in \mathcal I_2, \ s=\zeta=p$ and $f(I,J)= N^{-p} \prod \delta _{a_t,b_t}$, it is easy to check that
\begin{equation}
 \mathbb{E}\sum_{I,J}f(I,J)\prod_{t=1}^s (G^{\alpha}-\Pi)_{a_tb_t}= \mathbb{E}(m_2^{\alpha}-m_{2c})^p, \ \ \alpha=0, \ \gamma_{\max} .
\end{equation}
Now to conclude (\ref{simpler_pf}), it suffices to control the first term on the line (\ref{KEY22}). We consider the terms
\begin{flalign}\label{average_comparison1}
\tilde{\mathcal{P}}_{\gamma_n, \mathbf{k}_n  } \cdots \tilde{\mathcal{P}}_{\gamma_1, \mathbf{k}_1}\prod_{t=1}^p \left(\tilde{G}_{\mu_t\mu_t}-m_{2c}\right),
\end{flalign}
for $\mathbf k_1,\ldots, \mathbf k_n$ satisfying (\ref{indexassumption}).
By definition of $\tilde {\mathcal P}$, (\ref{average_comparison1}) is a sum of at most $C^{\sum \vert \mathbf{k}_i \vert}$ products of $\tilde{G}_{\mu\nu}$ and $(\tilde{G}_{\mu\mu}-m_{2c})$ terms, where the total number of $\tilde{G}_{\mu\nu}$ and $(\tilde{G}_{\mu\mu}-m_{2c})$ terms in each product is $\sum {\vert \mathbf{k}_i \vert}+p=O(\varphi^2)$. Due to the rough bound (\ref{5BOUNDT}), (\ref{average_comparison1}) is always bounded by $N^{O(\varphi^2)}$. Then with the assumptions that (\ref{KEYBOUNDS}) and (\ref{5BOUND1}) hold with $\xi_1$-high probability with $\xi_1\ge 3$, we see that the event that (\ref{KEYBOUNDS}) or (\ref{5BOUND1}) does not hold is negligible. 
Furthermore, for each product in (\ref{average_comparison1}) and any $1\le t \le p$, 
there are two $\mu_t$'s in the indices of $G$. These two $\mu_t$'s can only appear as (1) $(\tilde{G}_{\mu_t\mu_t}-m_{2c})$ in the product, or (2) $G_{\mu_t a} G_{b \mu_t}$, where $a, b$ come from some $\gamma_k$ and $\gamma_{l}$ via $\tilde{\mathcal{P}}$ (see Definition \ref{def_operator2}). Then after averaging over  $N^{-p} \sum_{\mu_1,\cdots, \mu_p}$, this term becomes (1) $\tilde{m}_2-m_{2c}$, which is bounded by $K$ by (\ref{KEYBOUNDS}), or (2) $N^{-1}\sum_{\mu_t} G_{\mu_t a} G_{b, \mu_t}$, which is bounded by $J^2 + CN^{-1}$ by (\ref{KEYBOUNDS}). Here for the $\mu_t=a$ or $b$ terms in case (2), we control the $G$ factors by $C$ using (\ref{5BOUND1}). In sum, for any fixed $\gamma_1,\ldots,\gamma_n$, $\mathbf k_1,\ldots, \mathbf k_n$, we have proved that
\begin{equation}
 \left\vert \frac{1}{N^p} \mathbb{E}\tilde{\mathcal{P}}_{\gamma_n, \mathbf{k}_n  } \cdots \tilde{\mathcal{P}}_{\gamma_1, \mathbf{k}_1}\prod_{t=1}^s \left(\tilde{G}_{\mu_t\mu_t}-m_{2c}\right) \right\vert \leq C^{\sum {\vert \mathbf{k}_i \vert+p}}\left(J^2+K + N^{-1}\right)^p . \label{KEY444}
\end{equation}
Together with (\ref{KEY22}), we conclude (\ref{simpler_pf}).
\end{proof}


Recall that Lemma \ref{lem_comgreenfunction} leads to the proof of Theorem \ref{thm_largerigidity}. Finally we prove Lemma \ref{lem_compdiffsupport} with Theorem \ref{thm_largerigidity}.

\begin{proof}[Proof of Lemma \ref{lem_compdiffsupport}]
For simplicity, we only prove (\ref{BDBD}). The proof for (\ref{BDBD1}) is similar. By (\ref{eq_gsq1}), we have
\begin{equation}\label{sum_formula}
\| \mathcal{G}_2(z) \|^2_{HS}= \sum_{\mu,\nu}|G_{\mu\nu}|^2=\frac{N\Im\, m_2(z)}{\eta}.
\end{equation}
Hence, it is equivalent to show that
\begin{equation}
\left\vert \mathbb{E}F\left(\eta^2 \sum_{\mu,\nu} G_{\mu\nu}\overline G_{\mu\nu}\right) -\mathbb{E}F\left(\eta^2 \sum_{\mu,\nu} {\tilde G}_{\mu\nu}\overline {\tilde G}_{\mu\nu}\right) \right\vert \leq N^{-\phi+C_4 \epsilon} , 
\end{equation}
for $z= E+i\eta$ with $E\in I_\epsilon$ and $\eta= N^{-2/3-\epsilon}$. Corresponding to the notations in (\ref{R}), we denote
\begin{equation}
x^S: = \eta^2 \sum_{\mu,\nu} S_{\mu\nu} \overline{S}_{\mu\nu}, \ \ x^R:=\eta^2 \sum_{\mu,\nu} R_{\mu\nu} \overline{R}_{\mu\nu}, \ \ x^T:=\eta^2 \sum_{\mu,\nu} T_{\mu\nu} \overline{T}_{\mu\nu}.
\end{equation}
Applying (\ref{sum_formula}) to $S,T$ and using (\ref{NEWMPBOUNDS}) and (\ref{SQUAREROOTBEHAVIOR}), we get that with high probability
\begin{equation}
\max_{\gamma} \left\{ \left\vert x^S \right\vert+\left\vert x^T \right\vert \right\} \leq N^{C\epsilon}.
\label{578}
\end{equation}
Since the rank of $H^\gamma - Q$ is at most 2, by Cauchy interlacing theorem, we have that
\begin{equation}
\left\vert \text{Tr} \, S - \text{Tr} \, R \right\vert \leq C \eta^{-1}. \label{577}
\end{equation}
Together with (\ref{578}), we also get that
\begin{equation}
\max_{\gamma} \left\vert x^R \right\vert \leq N^{C\epsilon} \ \text{ with high probability}. \label{578T}
\end{equation}
By (\ref{DIAGONAL}), (\ref{5BOUND2}) and the expansion (\ref{RESOLVENTEXPANSION2}), we get that with high probability,
\begin{equation}
\max_{\gamma}\left\{\vert S_{\mu\nu} \vert+\vert R_{\mu\nu} \vert \right\} \leq N^{-\phi+C\epsilon}+C\delta_{\mu\nu}. \label{579}
\end{equation}
Moreover, by (\ref{5BOUNDT}) we have the trivial bounds
\begin{equation}\label{bad_high}
\left\vert x^S \right\vert + \left\vert x^R \right\vert =O(N^2), \ \ \max_{\mu,\nu}\{\vert S_{\mu\nu} \vert+\vert R_{\mu\nu} \vert \} =O(N),
\end{equation}
on the bad event. Since the bad event holds with exponentially small probability, we can ignore it in the proof. 

Applying the Lindeberg replacement strategy, we get that
\begin{equation}\label{taylorexp1}
\mathbb{E}F\left(\eta^2 \sum_{\mu,\nu} G_{\mu\nu}\overline G_{\mu\nu}\right) -\mathbb{E}F\left(\eta^2 \sum_{\mu,\nu} {\tilde G}_{\mu\nu}\overline {\tilde G}_{\mu\nu}\right) =\sum_{\gamma=1}^{\gamma_{\max}}\left[\mathbb{E}F\left(x^S\right)-\mathbb{E}F\left(x^T\right)\right].
\end{equation}
From the Taylor expansion, we have
\begin{equation}\label{taylorexpansion}
F\left(x^S\right)-F\left(x^R\right)=\sum_{s=1}^2 \frac{1}{s !} F^{(s)}\left(x^R\right)\left(x^S-X^R\right)^s+\frac{1}{3!}F^{(3)}\left(\zeta_S\right)\left(x^S-x^R\right)^3,
\end{equation}
where $\zeta_S$ lies between $x^S$ and $x^R$. We have a similar expansion for $F\left(x^T\right)- F\left(x^R\right)$ with $\zeta_S$ replaced with $\zeta_T$. Let $\Phi(i,\mu)=\gamma$. We perform the expansion (\ref{RESOLVENTEXPANSION}) and use (\ref{graphical}) to get that 
\begin{equation}
S_{a_tb_t}=\sum_{0 \leq k \leq m } (-\sqrt{\sigma_i}x_{i\mu})^{k}\mathcal{P}_{k}R_{a_tb_t}+O(N^{- m\phi}). 
\end{equation}
Using this expansion and bound (\ref{5BOUND1}), we have that with $\xi_1$-high probability,
\begin{equation}\label{Bigsum}
\prod_{t=1}^s S_{a_tb_t}= \sum_{ 0 \leq k \leq ms} \ \sum_{\mathbf{k} \in I_{m,k}^s} \left(\mathcal{P}_{\mathbf{k}} \prod_{t=1}^s R_{a_tb_t}\right)(-\sqrt{\sigma_i}x_{i\mu})^{k_t}+O\left((2C_6)^s N^{-\xi\phi}\right),
\end{equation}
where 
\begin{equation}\label{def_Iab}
\mathbf{k}:=(k_1,\cdots,k_s), \ \ I^s_{m,k}=\left\{\mathbf{k} \in \mathbb{N}^s: 0 \leq k_i \leq m, \sum k_i=k \right\}.
\end{equation}
From the above definition, we have the rough bound 
\begin{equation}
\vert I_{m,k}^s \vert \leq s^k . \label{BBBBBBBBBBB}
\end{equation}
By Lemma \ref{propsotion_poperator} and (\ref{BBBBBBBBBBB}), the $k > m$ terms in (\ref{Bigsum}) can be bounded by
\begin{align*}
\left\vert \sum_{k > m} \ \sum_{\mathbf{k} \in I_{m,k}^s} \left(\mathcal{P}_{\mathbf{k}} \prod_{t=1}^s R_{a_tb_t}\right)(-\sqrt{\sigma_i}x_{i\mu})^{k_t} \right\vert \leq  \sum_{k > m} s^{k} (2C_6)^{k+s+1} \left(CN^{-\phi}\right)^k = O(s^{m} C^{m+s} N^{-m\phi}), 
\end{align*}
with $\xi_1$-high probability. Hence with $\xi_1$-high probability,
\begin{equation}
\prod_{t=1}^s S_{a_tb_t}=\prod_{t=1}^s R_{a_tb_t}+\sum_{1 \leq k \le m} (-\sqrt{\sigma_i}x_{i\mu})^{k} \left( \sum_{\mathbf{k} \in I_{m,k}^s} \mathcal{P}_{\mathbf{k}}\prod_{t=1}^s R_{a_tb_t}\right)+O\left(s^{m} C^{m+s} N^{-m\phi}\right). \label{REDUCED1}
\end{equation}
Similarly, we also have
\begin{equation}
\prod_{t=1}^s T_{a_tb_t}=\prod_{t=1}^s R_{a_tb_t}+\sum_{1 \leq k \le m} (-\sqrt{\sigma_i}\tilde{x}_{i\mu})^{k} \left( \sum_{\mathbf{k} \in I_{m,\alpha}^s} \mathcal{P}_{\mathbf{k}}\prod_{t=1}^s R_{a_tb_t}\right)+O\left(s^{m} C^{m+s} N^{-m\phi}\right). \label{REDUCED2}
\end{equation}
Again we can replace some of the resolvent entries with its complex conjugate by making some slight modifications to the notations.
Hence using (\ref{REDUCED1}) and (\ref{REDUCED2}) with $s=2$ and $m:=3/\phi$, we obtain that
\begin{equation} \label{rsequation}
x^S=x^R+\sum_{1 \leq k \leq 3/\phi}\left(\sum_{\mathbf{k} \in I_{{3}/{\phi},k}^2}\eta^2 \sum_{\mu,\nu} \mathcal{P}_{\gamma, \mathbf{k}}(R_{\mu\nu}\overline{R}_{\mu\nu}) \right)(-\sqrt{\sigma_i}x_{i\mu})^{k}+O(CN^{-3}),
\end{equation}
and
\begin{equation} \label{rsequation2}
x^T=x^R+\sum_{1 \leq k \leq 3/\phi}\left(\sum_{\mathbf{k} \in I_{{3}/{\phi},k}^2}\eta^2 \sum_{\mu,\nu} \mathcal{P}_{\gamma, \mathbf{k}}(R_{\mu\nu}\overline{R}_{\mu\nu})\right)(-\sqrt{\sigma_i}\tilde{x}_{i\mu})^{k}+O(CN^{-3}),
\end{equation}
with high probability.
To control the second term in (\ref{rsequation}), we need the following lemma.
\begin{lem} \label{lem_control} For any fixed $\mathbf{k} \neq 0$, $\mathbf{k} \in I^2_{3/\phi, k}$, and $p=O(1)$ with $p \in 2 \mathbb{Z}$, we have 
\begin{equation}\label{lem_skip}
\mathbb{E} \left\vert \sum_{\mu,\nu} \mathcal{P}_{\gamma,\mathbf{k}}  \left(R_{\mu\nu}\overline{R}_{\mu\nu}\right) \right\vert^p \leq \left(N^{1+C\epsilon}\right)^p.
\end{equation}
\end{lem}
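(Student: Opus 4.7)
My plan is to prove the deterministic bound
\[
\Bigl|\sum_{\mu,\nu\in\mathcal I_2}\mathcal P_{\gamma,\mathbf k}(R_{\mu\nu}\overline R_{\mu\nu})\Bigr|\le N^{1+C\epsilon}
\]
on a $\xi_1$-high-probability event; raising this to the $p$-th power handles the dominant contribution to (\ref{lem_skip}), while the complementary bad event is harmless because the trivial bound $|R_{ab}|\le\eta^{-1}$ only loses a polynomial factor for $p=O(1)$ and the bad-event probability decays like $\exp(-c\varphi^{\xi_1})$. The first step is to record that $R$ is exactly the Green function of a rank-two perturbation of $H^\gamma$, namely the one obtained by zeroing out the single entry of $X^\gamma$ at position $(i,\mu_0-M)$ with $(i,\mu_0)=\Phi^{-1}(\gamma)$. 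In particular the block representation (\ref{green2}) and the spectral decompositions (\ref{spectral1})--(\ref{spectral2}) carry over to $R$ verbatim, and the improved local law (\ref{NEWMPBOUNDS})---which I would first justify uniformly for all interpolating matrices $X^\gamma$ by rerunning Lemma \ref{lem_comgreenfunction}, and then transfer to $R$ by Cauchy interlacing---combined with the edge behaviour (\ref{SQUAREROOTBEHAVIOR}) supplies $\operatorname{Im}R_{aa}\le N^{-1/3+C\epsilon}$, from which the spectral Ward identity yields both
\[
\sum_{\mu\in\mathcal I_2}|R_{\mu a}|^2\le N^{1/3+C\epsilon}\ \ (\forall a\in\mathcal I)\qquad\text{and}\qquad\sum_{\mu,\nu\in\mathcal I_2}|R_{\mu\nu}|^2\le N^{4/3+C\epsilon}.
\]

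Next I would unpack $\mathcal P_{\gamma,\mathbf k}(R_{\mu\nu}\overline R_{\mu\nu})=R_{\mu\nu}^{*_\gamma(k_1+1)}\overline R_{\mu\nu}^{*_\gamma(k_2+1)}$ via Definition \ref{def_operator2} as a sum of at most $2^{|\mathbf k|}=O(1)$ chain products of $|\mathbf k|+2$ Green-function entries each. In every such chain all ``interior'' factors have both indices in the two-element set $\{i,\mu_0\}$ and are bounded by $C_6$ via (\ref{5BOUND1}), while the summation index $\mu$ appears only in entries of the form $R_{\mu *}$ or $\overline R_{\mu *}$, and likewise for $\nu$. The argument then splits on whether $k_1$ or $k_2$ vanishes. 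In Case A ($k_1,k_2\ge 1$) both $R_{\mu\nu}$ and $\overline R_{\mu\nu}$ are cut by $*_\gamma$, so the $\mu$- and $\nu$-sums decouple, and Cauchy--Schwarz with the Ward bound delivers the comfortable estimate $N^{1/3+C\epsilon}\cdot N^{1/3+C\epsilon}=N^{2/3+C\epsilon}$.

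Case B ($k_1=0$ and $k_2\ge 1$, or symmetric) is the main obstacle: one uncut $R_{\mu\nu}$ remains and the sums no longer factorise. Writing a generic summand as $R_{\mu\nu}\cdot\overline R_{\mu c_1}[\text{fixed}]\overline R_{d_1\nu}$ with $c_1,d_1\in\{i,\mu_0\}$, my plan is to reorder the double sum and apply Cauchy--Schwarz twice in order to pull out a Hilbert--Schmidt norm:
\[
\Bigl|\sum_\mu\overline R_{\mu c_1}\sum_\nu R_{\mu\nu}\overline R_{d_1\nu}\Bigr|\ \le\ \Bigl(\sum_\mu|R_{\mu c_1}|^2\Bigr)^{1/2}\Bigl(\sum_{\mu,\nu}|R_{\mu\nu}|^2\Bigr)^{1/2}\Bigl(\sum_\nu|R_{d_1\nu}|^2\Bigr)^{1/2}\ \le\ N^{1+C\epsilon}.
\]
A naive Cauchy--Schwarz on the full double sum only produces $N^{3/2+C\epsilon}$, so the sharper target crucially requires isolating the Hilbert--Schmidt sum $\sum_{\mu,\nu}|R_{\mu\nu}|^2$; its bound $N^{4/3+C\epsilon}$ is strictly smaller than the trivial $N/\eta=N^{5/3+\epsilon}$ and is a direct consequence of the square-root decay $\operatorname{Im}m_{2c}\sim\sqrt\eta$ at the soft edge supplied by (\ref{SQUAREROOTBEHAVIOR}). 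Aggregating over the $O(1)$ chain summands and both cases gives the claimed high-probability estimate, after which the $p$-th moment bound (\ref{lem_skip}) follows routinely.
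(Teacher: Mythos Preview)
Your deterministic approach has a genuine gap at the step where you claim that the improved \emph{averaged} local law (\ref{NEWMPBOUNDS}) ``supplies $\operatorname{Im}R_{aa}\le N^{-1/3+C\epsilon}$''. Equation (\ref{NEWMPBOUNDS}) controls only the trace $m_2=N^{-1}\sum_\mu G_{\mu\mu}$, and Cauchy interlacing transfers this to $m_2^R$; it says nothing about a single diagonal entry. The only entrywise input available in the large-support regime is (\ref{DIAGONAL}), which gives
\[
|R_{aa}-\Pi_{aa}|\le \varphi^{C_1}\Bigl(q+\sqrt{\tfrac{\operatorname{Im}m_{2c}}{N\eta}}+\tfrac{1}{N\eta}\Bigr),
\]
and here $q$ may be as large as $N^{-\phi}$ with $\phi>0$ arbitrarily small. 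Hence you only obtain $\operatorname{Im}R_{aa}\le CN^{-\phi}+N^{-1/3+C\epsilon}$, so that the Ward bound is merely $\sum_\mu|R_{\mu c_1}|^2\le \eta^{-1}\operatorname{Im}R_{c_1c_1}\le N^{2/3-\phi+C\epsilon}$. Feeding this into your Case~B chain yields
\[
(N^{2/3-\phi+C\epsilon})^{1/2}\,(N^{4/3+C\epsilon})^{1/2}\,(N^{2/3-\phi+C\epsilon})^{1/2}=N^{4/3-\phi+C\epsilon},
\]
which exceeds the target $N^{1+C\epsilon}$ whenever $\phi<1/3$; Case~A suffers the same deficiency. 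The Hilbert--Schmidt bound $\sum_{\mu,\nu}|R_{\mu\nu}|^2\le N^{4/3+C\epsilon}$ is correct (it uses only $\operatorname{Im}m_2^R$), but the two ``single-column'' factors are the obstruction, and no rearrangement of Cauchy--Schwarz will remove the individual-entry dependence.

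This is exactly why the paper does not attempt a direct bound. Its proof (deferred to \cite[(6.89)]{LY}) follows the same mechanism as Lemma~\ref{lem_comgreenfunction}: one expands the $p$-th power, applies the Green function representation theorem (Lemma~\ref{Greenfunctionrepresent}) to telescope each $R$-entry product back to a product of $\tilde G$-entries, and then uses that for the small-support matrix $\tilde X$ the off-diagonal entries obey the sharp bound $|\tilde G_{ab}|\le J\le N^{-1/3+C\epsilon}$ from (\ref{DIAGONAL}) with $q=O(N^{-1/2}\log N)$. In that comparison the summed indices $\mu_t,\nu_t$ are averaged against $\tilde G$-factors, producing factors of size $J^2+N^{-1}\le N^{-2/3+C\epsilon}$ per index pair, which is precisely what delivers $(N^{1+C\epsilon})^p$. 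In short, the missing ingredient in your plan is an entrywise diagonal bound that is simply unavailable for large support; the paper circumvents this by comparing to $\tilde X$ rather than working with $R$ directly.
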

\begin{proof}
This is (6.89) of \cite{LY}, we can repeat the proof there with minor modifications. In fact, its proof is very similar to the one of Lemma \ref{lem_comgreenfunction} given above. 
\end{proof} 
Given (\ref{lem_skip}), with Markov inequality we find that for any fixed $\mathbf{k} \neq 0$, $\mathbf{k} \in I^2_{3/\phi, k}$,
\begin{equation} \label{productbound}
\vert \mathcal{P}_{\gamma, \mathbf{k}} x^R \vert := \left\vert \eta^2 \sum_{\mu,\nu} \mathcal{P}_{\gamma,\mathbf{k}}\left(R_{\mu\nu}\overline{R}_{\mu\nu} \right)\right\vert \leq N^{-1/3+C\epsilon},
\end{equation}
holds with probability with $1-N^{-A}$ for any fixed $A>0$, where we used that $\eta=N^{-2/3-\epsilon}$. 
Combining (\ref{rsequation}), (\ref{productbound}) and (\ref{conditionA3}), we see that there exists a constant $C>0$ such that
\begin{equation} \label{taylorbound_1}
\mathbb{E} \vert x^S-x^R \vert^3 \leq N^{-5/2+C\epsilon},
\end{equation}
for sufficiently large $N$ independent of $\gamma$, where we used the bound (\ref{578}) on the bad event with probability $\le N^{-A}$. Since $\zeta_S$ is between $x^S$ and $x^R$, we have $|\zeta_S|\le N^{C\epsilon}$ with high probability by (\ref{578}). Together with (\ref{taylorbound_1}) and the assumption (\ref{FCondtion}), we get
\begin{equation} \label{taylorbound2}
\left\vert \sum_{\gamma=1}^{\gamma_{\max}} \mathbb{E}\left[F^{(3)}(\zeta_S)\left(x^S-x^R\right)^3\right] \right\vert \leq N^{-\phi+C\epsilon},
\end{equation}
for some $C_4>0$. We have a similar estimate for $\mathbb{E}\left[F^{(3)}(\zeta_T)\left(x^T-x^R\right)^3 \right]$. 

Now it only remains to deal with the first term on the right-hand side of (\ref{taylorexpansion}). Using (\ref{rsequation}), (\ref{rsequation2}) and the fact that the first four moments of $x_{i\mu}$ and $\tilde{x}_{i\mu}$ match, we obtain that
\begin{align}
& \left \vert \mathbb{E}\left[F^{(s)}(x^R)\left(x^S-x^R\right)^s\right]-\mathbb{E}\left[F^{(s)}(x^R)\left(x^T-x^R\right)^s \right] \right \vert  \nonumber \\
& \leq \left| \sum_{k=5}^{{9}/{\phi}} \ \sum_{\sum_{t=1}^s \vert \mathbf{k}_t \vert=k} \sum_{\mathbf{k}_t \in I_{{3}/{\phi},k}^{2s}}\mathbb{E} \prod_{t=1}^s \left(\mathcal{P}_{\gamma, \mathbf{k}_t }x^R\right)\right| \left(\left\vert \mathbb{E}(-\sqrt{\sigma_i}x_{i \mu})^{k} \right\vert+ \left\vert \mathbb{E}(\sqrt{\sigma_i}\tilde{x}_{i \mu})^{k} \right\vert \right)+O(CN^{-3}).
\end{align}
Recall that (\ref{conditionA3}) holds for $x_{\i\mu}$ and $\tilde x_{i\mu}$, $x_{i\mu}$ has support bounded by $N^{-\phi}$, and $\tilde x_{i\mu}$ has support bounded by $ O(N^{-1/2}\log N)$. Then it is easy to check that $ \vert \mathbb{E}(\tilde{x}_{i \mu})^{k} \vert \leq (\log N)^{C} N^{-5/2}$ and  $ \vert \mathbb{E}({x}_{i \mu})^{k} \vert \leq (\log N)^{C} N^{-2-\phi}$ for $k\ge 5$. Using (\ref{productbound}), we obtain that
\begin{equation} \label{taylorbound3}
\left\vert \mathbb{E}\left[F^{(s)}(x^R)(x^S-x^R)^s\right]-\mathbb{E}\left[F^{(s)}(x^R)(x^T-x^R)^s\right] \right\vert \leq N^{-2-\phi+C_4\epsilon}, \ 1 \leq s \leq 2
\end{equation}
Together with (\ref{taylorexp1}), (\ref{taylorexpansion}) and (\ref{taylorbound2}), we conclude the proof.
\end{proof}

\begin{appendix}
\section{Proof of Lemma \ref{lem_small}} \label{appendix1}

A major part of this appendix is devoted to the proof of the entrywise local law (\ref{DIAGONAL}) and the averaged local law (\ref{MPBOUNDS}). The other results of Lemma \ref{lem_small} are mostly consequences of (\ref{MPBOUNDS}) and (\ref{DIAGONAL}), and we will briefly describe their proof at the end of this appendix. 
We will basically follow the approach in \cite{KY2}, but modify some arguments under different assumptions in this paper. Throughout this section, we denote the spectral parameter by $z=E+i\eta$.

\subsection{Basic tools}
In this subsection, we collect some tools that will be used in the proof. For simplicity, we denote $Y:=DX$. 

\begin{defn}[Minors]
For $\mathbb T \subseteq \mathcal I$, we define the minor $H^{(\mathbb T)}:=(H_{ab}:a,b \in \mathcal I\setminus \mathbb T)$ obtained by removing all rows and columns of $H$ indexed by $a\in \mathbb T$. Note that we keep the names of indices of $H$ when defining $H^{(\mathbb T)}$, i.e. $(H^{(\mathbb{T})})_{ab}=\mathbf{1}_{ \{a,b \notin \mathbb{{T}}\}} H_{ab}$. Correspondingly, we define the Green function 
$$G^{(\mathbb T)}:=\left(H^{(\mathbb T)}\right)^{-1}= \left( {\begin{array}{*{20}c}
   { z\mathcal G_1^{(\mathbb T)}} & \mathcal G_1^{(\mathbb T)} Y^{(\mathbb T)}  \\
   {\left(Y^{(\mathbb T)}\right)^*\mathcal G_1^{(\mathbb T)}} & { \mathcal G_2^{(\mathbb T)} }  \\
\end{array}} \right)= \left( {\begin{array}{*{20}c}
   { z\mathcal G_1^{(\mathbb T)}} & Y^{(\mathbb T)}\mathcal G_2^{(\mathbb T)}   \\
   {\mathcal G_2^{(\mathbb T)}}\left(Y^{(\mathbb T)}\right)^* & { \mathcal G_2^{(\mathbb T)} }  \\
\end{array}} \right),$$
and the partial traces
$$m_1^{(\mathbb T)}:=\frac{1}{M}{\rm{Tr}}\, \mathcal G_1^{(\mathbb T)} = \frac{1}{Mz}\sum_{i\notin \mathbb T}G_{ii}^{(\mathbb T)},\ \ m_2^{(\mathbb T)}:=\frac{1}{N}{\rm{Tr}}\, \mathcal G_2^{(\mathbb T)} = \frac{1}{N}\sum_{\mu \notin \mathbb T}G_{\mu\mu}^{(\mathbb T)}.$$ 
We will abbreviate $(\{a\})\equiv (a)$, $(\{a, b\})\equiv (ab)$, and
$$\sum_{a\notin \mathbb T} \equiv \sum_{a}^{(\mathbb T)} \ , \ \ \sum_{a,b\notin \mathbb T} \equiv \sum_{a,b}^{(\mathbb T)}\ .$$
\end{defn}


\begin{lem}{(Resolvent identities).}

\begin{itemize}
\item[(i)]
For $i\in \mathcal I_1$ and $\mu\in \mathcal I_2$, we have
\begin{equation}
\frac{1}{{G_{ii} }} =  - 1 - \left( {YG^{\left( i \right)} Y^*} \right)_{ii} ,\ \ \frac{1}{{G_{\mu \mu } }} =  - z  - \left( {Y^*  G^{\left( \mu  \right)} Y} \right)_{\mu \mu }.\label{resolvent2}
\end{equation}

 \item[(ii)]
 For $i\ne j \in \mathcal I_1$ and $\mu \ne \nu \in \mathcal I_2$, we have
\begin{equation}
G_{ij}   = G_{ii} G_{jj}^{\left( i \right)} \left( {YG^{\left( {ij} \right)} Y^* } \right)_{ij},\ \ G_{\mu \nu }  = G_{\mu \mu } G_{\nu \nu }^{\left( \mu  \right)} \left( {Y^*  G^{\left( {\mu \nu } \right)} Y} \right)_{\mu \nu }. \label{resolvent3}
\end{equation}
For $i\in \mathcal I_1$ and $\mu\in \mathcal I_2$, we have
\begin{equation}
G_{i\mu } = G_{ii} G_{\mu \mu }^{\left( i \right)} \left( { - Y_{i\mu }  +  {\left( {YG^{\left( {i\mu } \right)} Y} \right)_{i\mu } } } \right), \ \ G_{\mu i}  = G_{\mu \mu } G_{ii}^{\left( \mu  \right)} \left( { - Y_{\mu i}^*  + \left( {Y^*  G^{\left( {\mu i} \right)} Y^*  } \right)_{\mu i} } \right)\label{resolvent6}.
\end{equation}

 \item[(iii)]
 For $a \in \mathcal I$ and $b, c \in \mathcal I \setminus \{a\}$,
\begin{equation}
G_{bc}^{\left( a \right)}  = G_{bc}  - \frac{G_{ba} G_{ac}}{G_{aa}}, \ \ \frac{1}{{G_{bb} }} = \frac{1}{{G_{bb}^{(a)} }} - \frac{{G_{ba} G_{ab} }}{{G_{bb} G_{bb}^{(a)} G_{aa} }}. \label{resolvent8}
\end{equation}

 \item[(iv)]
All of the above identities hold for $G^{(\mathbb T)}$ instead of $G$ for $\mathbb T \subset \mathcal I$.
\end{itemize}
\label{lemm_resolvent}
\end{lem}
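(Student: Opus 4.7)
The plan is to prove all four parts of Lemma \ref{lemm_resolvent} by standard applications of the Schur complement formula together with the defining identity $\mathcal H G = G \mathcal H = I$, where $\mathcal H$ denotes the $(M+N) \times (M+N)$ matrix
\begin{equation*}
\mathcal H := \begin{pmatrix} -I_{M \times M} & Y \\ Y^* & -z I_{N \times N} \end{pmatrix}, \qquad Y := DX,
\end{equation*}
so that $G = \mathcal H^{-1}$. The structural feature to exploit is that the $\mathcal I_1 \times \mathcal I_1$ diagonal block of $\mathcal H$ equals $-I$ and the $\mathcal I_2 \times \mathcal I_2$ block equals $-zI$; in particular $\mathcal H_{ij} = 0$ for distinct $i,j \in \mathcal I_1$, $\mathcal H_{\mu\nu} = 0$ for distinct $\mu,\nu \in \mathcal I_2$, while $\mathcal H_{i\mu} = Y_{i\mu}$ and $\mathcal H_{\mu i} = Y^*_{\mu i}$. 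Every identity below is then a matter of bookkeeping.

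For (\ref{resolvent2}), I isolate a single index $a$ and apply the rank-one Schur complement formula, which yields $1/G_{aa} = \mathcal H_{aa} - \mathcal H_{a,\bar a} (\mathcal H^{(a)})^{-1} \mathcal H_{\bar a, a}$ with $\bar a = \mathcal I \setminus \{a\}$. When $a = i \in \mathcal I_1$, the row $\mathcal H_{i, \bar i}$ is supported on $\mathcal I_2$ with entries $Y_{i,\cdot}$, so the quadratic form collapses to $(Y G^{(i)} Y^*)_{ii}$, and $\mathcal H_{ii} = -1$ supplies the leading constant; the $a = \mu \in \mathcal I_2$ case is analogous with constant $-z$. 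For (\ref{resolvent3}) and (\ref{resolvent6}) I apply the $2 \times 2$ block Schur complement to the $\{i, j\}$ (resp.\ $\{i, \mu\}$) block, invert the resulting $2 \times 2$ matrix, and then recognize the diagonal prefactor as $G_{ii} G_{jj}^{(i)}$ (resp.\ $G_{ii} G_{\mu\mu}^{(i)}$) by applying identity (\ref{resolvent2}) already proved but to the minor $H^{(i)}$, which gives $1/G_{jj}^{(i)} = -1 - (Y G^{(ij)} Y^*)_{jj}$. In the $\{i,j\}$ case $\mathcal H_{ij}=0$, so only the $(Y G^{(ij)} Y^*)_{ij}$ piece survives, whereas in the $\{i,\mu\}$ case the off-diagonal entry $\mathcal H_{i\mu} = Y_{i\mu}$ is nonzero, producing the explicit $-Y_{i\mu}$ summand that distinguishes (\ref{resolvent6}) from (\ref{resolvent3}); the $\mu i$ variant follows by the same argument applied to the transposed block.

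For (\ref{resolvent8}), I verify the candidate $G_{bc} - G_{ba} G_{ac}/G_{aa}$ directly as a solution of $\mathcal H^{(a)} G^{(a)} = I$: expanding
\begin{equation*}
\sum_{k \in \bar a} \mathcal H_{bk}\Bigl(G_{kc} - \tfrac{G_{ka} G_{ac}}{G_{aa}}\Bigr) = \Bigl(\delta_{bc} - \mathcal H_{ba} G_{ac}\Bigr) - \tfrac{G_{ac}}{G_{aa}}\bigl(-\mathcal H_{ba} G_{aa}\bigr) = \delta_{bc},
\end{equation*}
where I used $\sum_k \mathcal H_{bk} G_{kc} = \delta_{bc}$ and $\sum_k \mathcal H_{bk} G_{ka} = \delta_{ba} = 0$ for $b \neq a$; this forces $G^{(a)}_{bc} = G_{bc} - G_{ba} G_{ac}/G_{aa}$ since $\mathcal H^{(a)}$ is invertible. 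The scalar identity for $1/G_{bb}$ then follows by setting $b = c$, writing $G_{bb} - G_{bb}^{(a)} = G_{ba} G_{ab}/G_{aa}$, and dividing both sides by $G_{bb} G_{bb}^{(a)}$. Finally, part (iv) is immediate because every argument above uses only the symmetric block structure of $\mathcal H$ and linear algebra of principal submatrices, so it applies verbatim with $\mathcal H$, $G$, $G^{(a)}$ replaced by $\mathcal H^{(\mathbb T)}$, $G^{(\mathbb T)}$, $G^{(\mathbb T \cup \{a\})}$. There is no substantive obstacle; the lemma is a standard toolbox of Schur-complement identities, and the only care required is tracking the two types of diagonal entries ($-1$ vs.\ $-z$) and the off-diagonal $Y_{i\mu}$ that appears in the mixed case (\ref{resolvent6}).
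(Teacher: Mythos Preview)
Your proposal is correct and follows essentially the same approach as the paper, which simply states that ``all these identities can be proved using Schur's complement formula'' and refers to \cite[Lemma 4.4]{KY2}. You have in fact supplied more detail than the paper does, and your bookkeeping of the diagonal entries ($-1$ versus $-z$) and the off-diagonal $Y_{i\mu}$ in the mixed case is accurate.
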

\begin{proof}
All these identities can be proved using Schur's complement formula. The reader can refer to, for example, \cite[Lemma 4.4]{KY2}.
\end{proof}

\begin{lem}\label{Ward_id}
Fix constants $c_0,C_0, C_1>0$. The following estimates hold uniformly for any $z\in S(c_0,C_0,C_1)$:
\begin{equation}
\left\| G \right\| \le C\eta ^{ - 1} ,\ \ \left\| {\partial _z G} \right\| \le C\eta ^{ - 2}. \label{eq_gbound}
\end{equation}
Furthermore, we have the following identities:
\begin{align}
\sum\limits_{\mu  \in \mathcal I_2 } {\left| {G_{\nu \mu } } \right|^2 } = \sum\limits_{\mu  \in \mathcal I_2 } {\left| {G_{\mu \nu} } \right|^2 }  = \frac{{\Im \, G_{\nu\nu} }}{\eta }, \ \ &\sum\limits_{i \in \mathcal I_1 }  \left| {G_{j i} } \right|^2 = \sum\limits_{i \in \mathcal I_1 }  \left| {G_{ij} } \right|^2  = \frac{|z|^2}{\eta}\Im\left(\frac{G_{jj}}{z}\right) , \label{eq_gsq1} \\
\sum\limits_{i \in \mathcal I_1 } {\left| {G_{\mu i} } \right|^2 } = \sum\limits_{i \in \mathcal I_1 } {\left| {G_{i\mu} } \right|^2 } = {G}_{\mu \mu}  + \frac{\bar z}{\eta} \Im \, G_{\mu\mu} , \ \ &\sum\limits_{\mu \in \mathcal I_2 } {\left| {G_{i \mu} } \right|^2 } = \sum\limits_{\mu \in \mathcal I_2 } {\left| {G_{\mu i} } \right|^2 } =  \frac{{G}_{ii}}{z}  + \frac{\bar z}{\eta} \Im\left(\frac{{G_{ii} }}{z}\right) . \label{eq_gsq3} 
 \end{align}
All of the above estimates remain true for $G^{(\mathbb T)}$ instead of $G$ for any $\mathbb T \subseteq \mathcal I$. 
\label{lemma_Im}
\end{lem}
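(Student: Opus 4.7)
The plan is to prove the two parts of the lemma by separate elementary arguments: the operator-norm bounds (\ref{eq_gbound}) block by block from the explicit expression (\ref{green2}), and then the Ward-type identities (\ref{eq_gsq1})--(\ref{eq_gsq3}) from the standard resolvent Ward identity applied to the Hermitian matrices $DXX^*D^*$ and $X^*D^*DX$. For $\|G\|\leq C\eta^{-1}$, I would first use that $|z|=O(1)$ on $S(c_0,C_0,C_1)$, so the two diagonal blocks $z\mathcal G_1$ and $\mathcal G_2$ of $G$ (which are resolvents of Hermitian matrices up to a bounded scalar) have operator norms at most $|z|/\eta$ and $1/\eta$. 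For the off-diagonal block $\mathcal G_1 DX$, the identity $DX(DX)^*=\mathcal G_1^{-1}+zI$ yields $(\mathcal G_1 DX)(\mathcal G_1 DX)^*=\mathcal G_1^*+z\mathcal G_1\mathcal G_1^*$, whose norm is $O(\eta^{-2})$; hence $\|\mathcal G_1 DX\|=O(\eta^{-1})$. The derivative bound then follows by differentiating $G^{-1}$: since $\partial_z G^{-1}=-\mathrm{diag}(0,I_N)$, we get $\partial_z G=G\,\mathrm{diag}(0,I_N)\,G$, whose operator norm is at most $\|G\|^2\leq C\eta^{-2}$.

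For the identities in (\ref{eq_gsq1}), I would invoke the standard Ward identity $\Im\mathcal G_j=\eta\,\mathcal G_j\mathcal G_j^*$ valid for the resolvent of any Hermitian matrix. Its $(\nu,\nu)$-entry, combined with $G_{\mu\nu}=(\mathcal G_2)_{\mu\nu}$ for $\mu,\nu\in\mathcal I_2$, gives the $\mathcal I_2$-block identity at once. Its $(j,j)$-entry, combined with $G_{ij}=z(\mathcal G_1)_{ij}$ for $i,j\in\mathcal I_1$, gives the $\mathcal I_1$-block identity, with the factor $|z|^2/\eta$ and the quantity $\Im(G_{jj}/z)$ emerging naturally from the scaling by $z$. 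The equalities $\sum|G_{\nu\mu}|^2=\sum|G_{\mu\nu}|^2$ and $\sum|G_{ij}|^2=\sum|G_{ji}|^2$ follow from the complex symmetry of $\mathcal G_{1,2}$ in the real case, where $DXX^*D^*$ and $X^*D^*DX$ are real symmetric.

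For the cross identities in (\ref{eq_gsq3}), I would combine (\ref{green2}) with the two algebraic resolvent identities $DX(DX)^*=\mathcal G_1^{-1}+zI$ and $(DX)^*\mathcal G_1 DX=I+z\mathcal G_2$, the latter following by direct rearrangement from $\mathcal G_1^{-1}=DXX^*D^*-z$. For instance, from $G_{i\mu}=(\mathcal G_1 DX)_{i\mu}$ one obtains
\[
\sum_{\mu\in\mathcal I_2}|G_{i\mu}|^2 = (\mathcal G_1 DX(DX)^*\mathcal G_1^*)_{ii} = (\mathcal G_1^*)_{ii}+z(\mathcal G_1\mathcal G_1^*)_{ii} = \overline{(\mathcal G_1)_{ii}}+\tfrac{z}{\eta}\Im(\mathcal G_1)_{ii},
\]
and writing $G_{ii}/z=a+ib$ with $a,b\in\mathbb R$ shows that this collapses to the real number $a+Eb/\eta$, which is exactly what the stated expression $G_{ii}/z+(\bar z/\eta)\Im(G_{ii}/z)$ reduces to once the imaginary parts cancel. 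The companion identity for $\sum_i|G_{\mu i}|^2$ is proved analogously from the second algebraic relation, and the statements for $G^{(\mathbb T)}$ are immediate since all arguments depend only on the algebraic structure of (\ref{green2}), which is preserved under minoring. The entire proof is elementary; the only bookkeeping subtlety, and what I would expect to be the one place requiring genuine care, is tracking the extra factors of $z$ and complex conjugates introduced by the non-Hermitian factor $\mathrm{diag}(I_M,zI_N)$ in $G^{-1}$, which is precisely why the $\mathcal I_1$-block and cross identities carry $|z|^2$ and $\bar z/\eta$ factors rather than the simpler form appearing in the $\mathcal I_2$-block.
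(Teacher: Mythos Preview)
Your proposal is correct and takes essentially the same approach as the paper, which simply states that the estimates and identities follow from ``simple calculations using (\ref{green2}), (\ref{spectral1}) and (\ref{spectral2})'' and refers to \cite[Lemma 4.6]{KY2} and \cite[Lemma 3.5]{XYY}. Your use of the standard Ward identity $\Im\,\mathcal G_j=\eta\,\mathcal G_j\mathcal G_j^*$ and the algebraic relations $DX(DX)^*=\mathcal G_1^{-1}+zI$ and $(DX)^*\mathcal G_1 DX=I+z\mathcal G_2$ is exactly equivalent to what one would extract from the spectral decompositions (\ref{spectral1})--(\ref{spectral2}), and your block-by-block treatment of $\|G\|$ and $\|\partial_z G\|$ via (\ref{green2}) is the natural way to unpack the paper's reference.
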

\begin{proof}
These estimates and identities can be proved through simple calculations using (\ref{green2}), (\ref{spectral1}) and (\ref{spectral2}). We refer the reader to \cite[Lemma 4.6]{KY2} and \cite[Lemma 3.5]{XYY}.
\end{proof}

\begin{lem}
Fix constants $c_0,C_0, C_1>0$. For any $\mathbb T \subseteq \mathcal I$, the following bounds hold uniformly in $z\in S(c_0,C_0,C_1)$:
\begin{equation}\label{m_T}
\left| {m_2  - m_2^{\left( \mathbb T \right)} } \right| \le \frac{{2\left| \mathbb T \right|}}{{N\eta }}, 
\end{equation}
and 
\begin{equation}\label{m11_T}
\left| {\frac{1}{N}\sum_{i=1}^M \sigma_i \left(G_{ii}^{(\mathbb T)} - G_{ii}\right)} \right| \le \frac{{C\left| \mathbb T \right|}}{{N\eta }}, 
\end{equation}
where $C$ is a constant depending only on $\tau$.
\end{lem}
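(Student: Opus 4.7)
The plan is to reduce both bounds to the case $|\mathbb T|=1$ and then iterate. Since $G^{(\mathbb T)}$ is itself the Green function of the sample covariance matrix associated with $D^{(\mathbb T_1)} X^{(\mathbb T)}$ (where $\mathbb T_1 = \mathbb T \cap \mathcal I_1$ and $X^{(\mathbb T)}$ is $X$ with the corresponding rows/columns removed), the Ward identities of Lemma \ref{Ward_id} apply equally to $G^{(\mathbb T')}$ for any $\mathbb T' \subseteq \mathbb T$. Hence by the triangle inequality it suffices to show that each single-index removal contributes $O((N\eta)^{-1})$ to each quantity of interest, and the workhorse is the one-step resolvent identity (\ref{resolvent8}),
\[
G_{bb}^{(a)} - G_{bb} = -\frac{G_{ba} G_{ab}}{G_{aa}}, \qquad b \neq a.
\]

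For (\ref{m_T}) with $\mathbb T = \{a\}$, I sum this identity over $\mu \in \mathcal I_2 \setminus\{a\}$. When $a \in \mathcal I_1$ this immediately gives $N(m_2 - m_2^{(a)}) = G_{aa}^{-1} \sum_{\mu \in \mathcal I_2} G_{\mu a}^2$; when $a \in \mathcal I_2$, an additional $G_{aa}$ appears from the missing $\mu=a$ diagonal term, but after the small algebraic cancellation $G_{aa}+G_{aa}^{-1}(\sum_{\mu} G_{\mu a}^2 - G_{aa}^2) = G_{aa}^{-1}\sum_\mu G_{\mu a}^2$, it reduces to exactly the same expression (using the complex symmetry $G_{\mu a}=G_{a\mu}$ that holds in the real case). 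Bounding in absolute value and invoking the Ward identity $\sum_{\mu \in \mathcal I_2}|G_{\mu a}|^2 = \operatorname{Im} G_{aa}/\eta$ --- which holds for both $a \in \mathcal I_1$ via (\ref{eq_gsq3}) and $a \in \mathcal I_2$ via (\ref{eq_gsq1}) --- I obtain $|N(m_2-m_2^{(a)})| \le \operatorname{Im} G_{aa}/(\eta |G_{aa}|) \le 1/\eta$. Iterating over the $|\mathbb T|$ single-index removals then yields $|m_2 - m_2^{(\mathbb T)}| \le |\mathbb T|/(N\eta)$, stronger than (\ref{m_T}).

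The second bound (\ref{m11_T}) follows by the same strategy but with the weight $\sigma_i$ and the trace taken over $i \in \mathcal I_1$. When $a \in \mathcal I_2$, the single-removal expression is $-G_{aa}^{-1}\sum_{i \in \mathcal I_1}\sigma_i G_{ia}^2$; bounding $\sigma_i \le \tau^{-1}$ and invoking (\ref{eq_gsq3}) to evaluate $\sum_{i \in \mathcal I_1}|G_{ia}|^2 = \operatorname{Re} G_{aa} + E \operatorname{Im} G_{aa}/\eta$ controls this by $\tau^{-1}(1 + E/\eta) \le C/\eta$ on $S(c_0,C_0,C_1)$, where $C$ depends on $\tau$ and the fixed constants $c_0,C_0$ (through the bound $E \le C_0 \lambda_r = O(1)$ and $\eta \le 1$). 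When $a \in \mathcal I_1$, the $i=a$ term contributes an extra $-\sigma_a G_{aa}$, bounded by $\tau^{-1}|z|/\eta \le C/\eta$ (using $G_{aa} = z(\mathcal G_1)_{aa}$ from (\ref{green2}) together with $\|\mathcal G_1\|\le 1/\eta$); the remaining sum is bounded via (\ref{eq_gsq1}), noting that $\operatorname{Im}(G_{aa}/z) = \operatorname{Im}(\mathcal G_1)_{aa} \ge 0$ is at most $|(\mathcal G_1)_{aa}| = |G_{aa}|/|z|$, which gives $\sum_{i \in \mathcal I_1}|G_{ia}|^2 / |G_{aa}| \le |z|/\eta \le C/\eta$. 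In both sub-cases the single-removal contribution is $C(\tau)/(N\eta)$, and iteration finishes the proof.

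The main --- and rather mild --- obstacle is the case analysis $a \in \mathcal I_1$ versus $a \in \mathcal I_2$ together with the careful bookkeeping of the diagonal term $G_{aa}$, which is present in one of the two traces but not the other after removal. Everything else is a direct algebraic manipulation of (\ref{resolvent8}) combined with the Ward identities of Lemma \ref{Ward_id}.
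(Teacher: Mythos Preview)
Your proof is correct and follows essentially the same strategy as the paper: reduce to a single-index removal via the resolvent identity (\ref{resolvent8}), control the resulting sum by a Ward identity from Lemma \ref{Ward_id}, and then iterate. Your observation that $\sum_{\mu\in\mathcal I_2}|G_{\mu a}|^2=\Im G_{aa}/\eta$ holds for \emph{all} $a\in\mathcal I$ (for $a\in\mathcal I_1$ this is a rewriting of (\ref{eq_gsq3}) using $G_{aa}=z(\mathcal G_1)_{aa}$) is a genuine simplification over the paper, which treats the two terms of (\ref{eq_gsq3}) separately and therefore obtains the constant $2$ in (\ref{m_T}); your unified identity gives the sharper constant $1$.
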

\begin{proof}
For $\mu\in\mathcal I_2$, we have
\begin{align}
\left|m_2-m_2^{(\mu)}\right|& =\frac{1}{N}\left|\sum_{\nu\in\mathcal I_2}  \frac{G_{\nu\mu}G_{\mu\nu}}{G_{\mu\mu}}\right| \le \frac{1}{N|G_{\mu\mu}|} \sum_{\nu\in\mathcal I_2} |G_{\nu\mu}|^2 = \frac{\Im\, G_{\mu\mu}}{N\eta |G_{\mu\mu}|} \le \frac{1}{N\eta},  \label{rough_boundmi}
\end{align}
where in the first step we used (\ref{resolvent8}), in the second and third steps the equality (\ref{eq_gsq1}). Similarly, using (\ref{resolvent8}) and (\ref{eq_gsq3}) we get
\begin{align*}
\left|m_2 -m_2^{(i)}\right| & = \frac{1}{N}\left|\sum_{\nu \in\mathcal I_2}\frac{G_{\nu i}G_{i\nu}}{G_{ii}}\right| \le \frac{1}{N|G_{ii}|} \left( \frac{{G}_{ii}}{z}  + \frac{\bar z}{\eta} \Im\left(\frac{{G_{ii} }}{z}\right)\right)   \le \frac{2}{N\eta}.
\end{align*}
Then we can prove (\ref{m_T}) by induction on the indices in $\mathbb T$. The proof for (\ref{m11_T}) is similar except that one need to use the assumption (\ref{assm3}).
\end{proof}

The following large deviation bounds for bounded supported random variables are proved in \cite[Lemma 3.8]{EKYY1}. 

\begin{lem}\label{largederivation}
Let $(x_i)$, $(y_i)$ be independent families of centered and independent random variables, and $(A_i)$, $(B_{ij})$ be families of deterministic complex numbers. Suppose the entries $x_i$ and $y_j$ have variance at most $N^{-1}$ and satisfies the bounded support condition (\ref{eq_support}) with $q\le N^{-\epsilon}$ for some $\epsilon>0$. Then for any fixed $\xi>0$, the followings hold with $\xi$-high probability:
\begin{align}
\left\vert \sum_i A_i x_i \right\vert & \leq \varphi^{\xi}\left[q \max_{i} \vert A_i \vert+ \frac{1}{\sqrt{N}}\left(\sum_i |A_i|^2 \right)^{1/2} \right], \\
\left\vert \sum_{i,j} x_i B_{ij} y_j \right\vert & \leq \varphi^{2\xi}\left[q^2 B_d  + qB_o + \frac{1}{N}\left(\sum_{i\ne j} |B_{ij}|^2\right)^{{1}/{2}} \right], \\
\left\vert \sum_{i} \bar x_i B_{ii} x_i - \sum_{i} (\mathbb E|x_i|^2) B_{ii}  \right\vert & \leq \varphi^{\xi}q B_d  ,\ \ \left\vert \sum_{i\ne j} \bar x_i B_{ij} x_j \right\vert  \leq \varphi^{2\xi}\left[qB_o + \frac{1}{N}\left(\sum_{i\ne j} |B_{ij}|^2\right)^{{1}/{2}} \right],
\end{align}
where
$$B_d:=\max_{i} |B_{ii} | , \ \ B_o:= \max_{i\ne j} |B_{ij}|.$$ 
\end{lem}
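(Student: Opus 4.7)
The plan is to follow the standard moment method of \cite[Lemma 3.8]{EKYY1}. Since our hypotheses on $x_i$ and $y_j$ (independent, centered, variance $\le N^{-1}$, bounded support $q$) match those in \cite{EKYY1}, the proof is essentially an adaptation and should proceed uniformly for the four estimates: compute the $(2p)$-th moment of the relevant sum for a large even integer $p$, then apply Markov's inequality with $p = \lceil \varphi^\xi \rceil$, so that the resulting probability bound of the form $(Cp)^p \varphi^{-2p\xi}$ decays like $\exp(-c\varphi^\xi)$.

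For the linear bound (1), I would expand $\mathbb{E}\big|\sum_i A_i x_i\big|^{2p}$ via multinomial; by independence and centering, only index patterns in which every distinct index appears at least twice survive. The key input is that bounded support plus variance $\le N^{-1}$ yields $\mathbb{E}|x_i|^{2k} \le q^{2k-2}/N$ for all $k \ge 1$. Grouping surviving terms by the partition of the $2p$ slots, perfect pairings contribute the Gaussian-type term $(2p-1)!!\,(N^{-1}\sum_i|A_i|^2)^p$, while each block of size $k\ge 3$ produces an additional factor of $q^{k-2}\max_i|A_i|^{k-2}$. A routine combinatorial bound then gives
\begin{equation*}
\mathbb{E}\Big|\textstyle\sum_i A_i x_i\Big|^{2p} \le (Cp)^p \Big[(q \max_i |A_i|)^{2p} + (N^{-1}\textstyle\sum_i |A_i|^2)^p\Big],
\end{equation*}
from which (1) follows by Markov.

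For the bilinear forms (2) and (4), one expands in $2p$ pairs of indices; now independence forces each $x$-index and each $y$-index to appear at least twice. Perfect ``ladder'' pairings (matching both coordinates) generate the Hilbert--Schmidt contribution $N^{-1}(\sum_{i\ne j}|B_{ij}|^2)^{1/2}$, while single-coordinate matches produce the $qB_o$ terms; the diagonal contribution accounts for the $q^2 B_d$ piece in (2). For the centered diagonal form (3), each summand $B_{ii}(\bar x_i x_i - \mathbb{E}|x_i|^2)$ has mean zero, is bounded in magnitude by $2q^2 |B_{ii}|$, and has variance $\le |B_{ii}|^2(\mathbb{E}|x_i|^4 - (\mathbb{E}|x_i|^2)^2) \le |B_{ii}|^2 q^2/N$; summing gives total variance $\le q^2 B_d^2$, and the same moment/Markov scheme yields the $\varphi^\xi q B_d$ tail bound.

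The main obstacle is purely combinatorial bookkeeping: one must organize the moment expansion by partition type (distinguishing conjugate from non-conjugate occurrences, and matching $x$- and $y$-indices in the bilinear case) and verify that, after optimizing the distribution of ``excess'' factors of $q$ among blocks of size $\ge 3$, the final bound splits into the two (or three) terms displayed above. No new ideas beyond those already in \cite{EKYY1} are required, so in the write-up one could either reproduce the computation in full or simply invoke \cite[Lemma 3.8]{EKYY1} after observing that our assumptions imply theirs.
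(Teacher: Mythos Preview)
Your proposal is correct and matches the paper's approach: the paper gives no proof at all for this lemma and simply cites \cite[Lemma~3.8]{EKYY1}, which is exactly the reference you identify and whose moment-method argument you accurately sketch. Your final remark---that one may either reproduce the combinatorics or just invoke \cite[Lemma~3.8]{EKYY1}---is precisely what the paper does (it chooses the latter).
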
  

%


Finally, we have the following lemma, which is a consequence of the Assumption \ref{assm_big2}.
\begin{lem}\label{lem_assm3}
There exists constants $c_0, \tau' >0$ such that 
\begin{equation}\label{assumption3}
|1+m_{2c}(z)\sigma_k |\ge \tau',
\end{equation}
for all $z \in S(c_0,C_0,C_1)$ and $1\le k \le M$.
\end{lem}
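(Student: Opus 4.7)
The plan is a two-stage extension of Assumption~\ref{assm_big2}, which only concerns the single pair $(\lambda_r,\sigma_1)$, to the uniform statement of the lemma. First I extend from $\sigma_1$ to every $\sigma_k$ while keeping $z=\lambda_r$ by a monotonicity argument; then I extend from $z=\lambda_r$ to all of $S(c_0,C_0,C_1)$ by continuity of $m_{2c}$, shrinking $c_0$ if necessary.

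For the first stage, write $b_1:=m_{2c}(\lambda_r)$. From the lemma describing the support of the deformed MP law we have $b_1\in(-\sigma_1^{-1},0)$; in particular $b_1<0$. Since $\sigma_k\le\sigma_1$ by the ordering in (\ref{simple_assumption}), multiplying by the negative number $b_1$ reverses the inequality: $b_1\sigma_k\ge b_1\sigma_1$. Together with $b_1\sigma_k\le 0$ this yields
\[
\tau \;\le\; 1+b_1\sigma_1 \;\le\; 1+b_1\sigma_k \;\le\; 1,
\]
where the lower bound is precisely Assumption~\ref{assm_big2}. Hence $|1+m_{2c}(\lambda_r)\sigma_k|\ge\tau$ for every $k$.

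For the second stage, recall that $m_{2c}(z)$ is characterized as the solution in $\overline{\mathbb{C}_+}$ of $z=f(m_{2c}(z))$, with $f'(b_1)=0$ by (\ref{equationEm2}). One checks directly from (\ref{deformed_MP2}) that $|f''(b_1)|$ is bounded away from zero by a constant depending only on $\tau$ and $d$: the potentially singular contribution comes from evaluating $(1+b_1x)^{-3}$ near $x=\sigma_1$, whose denominator is at least $\tau^3$ by the first stage, while the remaining contributions are controlled by (\ref{assm3}) together with the lower bound $|b_1|\gtrsim 1$ forced by boundedness of $\lambda_r=f(b_1)$. Combined with the square-root behavior (\ref{SQUAREROOT}), this critical-point structure gives the local expansion
\[
|m_{2c}(z)-b_1|\;\le\;C\sqrt{|z-\lambda_r|}
\]
uniformly in $z\in\overline{\mathbb{C}_+}$ near $\lambda_r$ and in $N$. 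Shrinking $c_0$ so that the right-hand side times $\sigma_1$ is at most $\tau/2$ on the portion of $S(c_0,C_0,C_1)$ within distance $2c_0$ of $\lambda_r$, the triangle inequality yields $|1+m_{2c}(z)\sigma_k|\ge\tau/2$ there. On the complementary region a compactness argument finishes the job: the Herglotz property $\operatorname{Im} m_{2c}(z)>0$ on $\mathbb{C}_+$ prevents $m_{2c}(z)=-\sigma_k^{-1}$ in the interior, while on the real-axis boundary with $E>\lambda_r$ the identity $m_{2c}(E)\in(b_1,0)$ reduces the bound back to the first stage via the same monotonicity.

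The main obstacle is the uniformity in $N$: $\pi_N$, $b_1$, and the constant $C$ in the square-root expansion all depend on $N$. This is handled by tracking every constant purely in terms of $\tau$ and $d$ through Assumptions~\ref{assm_big1}--\ref{assm_big2}; alternatively, since $\{\pi_N\}$ is tight on $[0,\tau^{-1}]$ by (\ref{assm3}), a subsequence along which a uniform bound fails could be extracted to a weak limit in which the limiting form of Assumption~\ref{assm_big2} would be violated.
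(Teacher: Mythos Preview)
Your proposal is correct and follows essentially the same approach as the paper: first extend Assumption~\ref{assm_big2} from $\sigma_1$ to all $\sigma_k$ at $z=\lambda_r$ using $b_1<0$ and the ordering of the $\sigma_k$, then extend to general $z$ via the square-root behavior of $m_{2c}$ near $\lambda_r$ together with real-axis monotonicity and imaginary-part bounds away from $\lambda_r$. The paper's version is slightly more explicit in its three-case split of the far region ($\kappa+\eta\le 2c_0$; $\eta\ge c_0$; $\eta\le c_0$ with $E\ge\lambda_r+c_0$) and simply cites Lemma~\ref{lem_mbehavior} for the square-root expansion rather than deriving it from $f''(b_1)\ne 0$, but these are presentational differences only.
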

\begin{proof}
By Assumption \ref{assm_big2} and the fact $m_{2c}(\lambda_r) \in (-\sigma_1^{-1}, 0)$, we have
$$\left| 1+ \sigma_k m_{2c}(\lambda_r) \right| \ge \tau,  \ \ 1\le k \le M.$$
Applying (\ref{SQUAREROOT}) to the Stieltjes transform
\begin{equation}\label{Stj_app}
m_{2c}(z):=\int_{\mathbb R} \frac{\rho_{2c}(dx)}{x-z},
\end{equation}
we can verify that $m_{2c}(z) \sim \sqrt{z-\lambda_r}$ for $z$ close to $\lambda_r$. Hence if $\kappa+\eta \le 2c_0$ for some sufficiently small $c_0$, we have
$$\left| 1+ \sigma_k m_{2c}(z) \right| \ge \tau/2.$$
On the other hand, if $\eta \ge c_0$, there exists $\tau'$ depending on $c_0$ such that
$$\left| 1+ \sigma_k m_{2c}(z) \right| \ge \Im\, m_{2c}(z) \ge \tau' $$
by (\ref{SQUAREROOTBEHAVIOR}). Finally, it remains to consider the case $\eta \le c_0$ and $E-\lambda_r \ge c_0$. In fact, for $\eta=0$ and $E\ge \lambda_r + c_0$, it is easy to see that $m_{2c}'(E)\ge 0$ with the formula (\ref{Stj_app}). Hence we have
$$\left| 1+ \sigma_k m_{2c}(E) \right| \ge \left| 1+ \sigma_k m_{2c}(\lambda_r + c_0) \right| \ge \tau/2, \ \ E\ge \lambda_r + c_0.$$
Using (\ref{Stj_app}) again, we can verify that $|m_{2c}'(z)| = O(1)$. So if we choose $c_0$ sufficiently small, we have
$$\left| 1+ \sigma_k m_{2c}(E+i\eta) \right| \ge \frac{1}{2}\left| 1+ \sigma_k m_{2c}(E) \right| \ge \tau/4$$
for $E\ge \lambda_r + c_0$ and $\eta \le c_0$.
\end{proof}

\subsection{Proof of the local laws}

Throughout this section, we fix $\xi_1\ge 3$. Our goal is to prove that $G$ is close to $\Pi$ in the sense of entrywise and averaged local laws. Hence it is convenient to introduce the following random control parameters.

\begin{defn}[Control parameters]
We define the entrywise and averaged errors
\begin{equation}\label{eqn_randomerror}
\Lambda : = \mathop {\max }\limits_{a,b \in \mathcal I} \left| {\left( {G - \Pi } \right)_{ab} } \right|,\ \ \Lambda _o : = \mathop {\max }\limits_{a \ne b \in \mathcal I} \left| {G_{ab} } \right|, \ \ \theta:= |m_2-m_{2c}| .
\end{equation}
Moreover, we define the random control parameter
\begin{equation}\label{eq_defpsitheta}
\Psi _\theta  : = \sqrt {\frac{{\Im \, m_{2c}  + \theta }}{{N\eta }}} + \frac{1}{N\eta},
\end{equation}
and the deterministic control parameter
\begin{equation}\label{eq_defpsi}
\Psi := \sqrt {\frac{\Im\, m_{2c} }{{N\eta }} } + \frac{1}{N\eta}.
\end{equation}
\end{defn}

\begin{rem} 
By definition, we trivially have $\theta=O(\Lambda)$. Also by (\ref{barm}), we immediately get that
$$|m_1 - m_{1c}|=\theta d.$$
\end{rem}

In analogy to \cite[Section 3]{EKYY1} and \cite[Section 5]{KY2}, we introduce the $Z$ variables
\begin{equation*}
  Z_{a}^{(\mathbb T)}:=(1-\mathbb E_{a})\left(G_{aa}^{(\mathbb T)}\right)^{-1}, \ \ a\notin \mathbb T,
\end{equation*}
where $\mathbb E_{a}[\cdot]:=\mathbb E[\cdot\mid H^{(a)}],$ i.e. it is the partial expectation over the randomness of the $a$-th row and column of $H$. By (\ref{resolvent2}), we have
\begin{equation}\label{Zi}
Z_i = (\mathbb E_{i} - 1) \left( {YG^{\left( i \right)} Y^*} \right)_{ii} = \sigma_i \sum_{\mu ,\nu\in \mathcal I_2} G^{(i)}_{\mu\nu} \left(\frac{1}{N}\delta_{\mu\nu} - X_{i\mu}X_{i\nu}\right),
\end{equation}
and 
\begin{equation}\label{Zmu}
Z_\mu = (\mathbb E_{\mu} - 1) \left( {Y^*  G^{\left( \mu  \right)} Y} \right)_{\mu \mu }= \sum_{i,j \in \mathcal I_1} \sqrt{\sigma_i \sigma_j}G^{(\mu)}_{ij} \left(\frac{1}{N} \delta_{ij} - X_{i\mu}X_{j\mu}\right).
\end{equation}
The following estimate plays a key role in the proof of local laws.

\begin{lem}\label{Z_lemma}
Let $c_0>0$ be sufficiently small and fix $C_0, C_1,\xi >0$. Define the $z$-dependent event $\Xi(z):=\{\Lambda(z) \le (\log N)^{-1}\}$. Then there exists $C>0$ such that the following estimates hold for all $a\in \mathcal I$ and $z\in S(c_0,C_0,C_1)$ with $\xi$-high probability:
\begin{align}
{\mathbf 1}(\Xi)\left(\Lambda_o + |Z_{a}|\right) \le C\varphi^{2\xi}\left(q+\Psi_\theta\right), \label{Zestimate1}
\end{align}
and 
\begin{align}
{\mathbf 1}\left(\eta \ge 1 \right)\left(\Lambda_o + |Z_{a}|\right) \le C\varphi^{2\xi}\left(q+\Psi_\theta\right). \label{Zestimate2}
\end{align}
\end{lem}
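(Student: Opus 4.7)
The plan is to follow the standard strategy for entrywise local laws of sample covariance matrices: express each $Z_a$ as a centered quadratic form in an independent column or row of $X$, bound it together with the off-diagonal entries $G_{ab}$ using the large deviation estimates from Lemma \ref{largederivation}, and control the arising $\ell^2$-sums via the Ward identities in Lemma \ref{lemma_Im} together with the self-consistent equation (\ref{deformed_MP21}).

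For the estimate (\ref{Zestimate1}), we restrict to $\Xi$. Lemma \ref{lem_assm3} gives $|\Pi_{aa}| \lesssim 1$, so $\Lambda \le (\log N)^{-1}$ forces $|G_{aa}| \sim 1$, and (\ref{resolvent8}) then yields $|G^{(\mu)}_{ii}| = O(1)$ and $|G^{(\mu)}_{ij}| \lesssim \Lambda_o$ for $i \ne j$. Applying Lemma \ref{largederivation} to the expression (\ref{Zmu}) with $B_{ij} = \sqrt{\sigma_i\sigma_j}G^{(\mu)}_{ij}$ gives
\begin{equation*}
|Z_\mu| \lesssim \varphi^{2\xi}\Bigl[q + q\Lambda_o + \frac{1}{N}\Bigl(\sum_{i\ne j \in \mathcal I_1}\sigma_i\sigma_j|G^{(\mu)}_{ij}|^2\Bigr)^{1/2}\Bigr].
\end{equation*}
The Ward identity (\ref{eq_gsq1}) yields $\sum_i |G^{(\mu)}_{ij}|^2 = |z|^2\Im(G^{(\mu)}_{jj}/z)/\eta$; summing this with weight $\sigma_j$, using (\ref{m11_T}) to swap $G^{(\mu)}_{jj}$ for $G_{jj}$, and invoking (\ref{deformed_MP21}) to rewrite $N^{-1}\sum_j\sigma_j\Pi_{jj}/z$ in terms of $m_{2c}$, one obtains a bound of order $M(\Im m_{2c} + \theta + \Lambda)/\eta$. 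Thus the last term is $O(\varphi^{2\xi}\Psi_\theta)$, and the same argument applied to (\ref{Zi}) via the companion Ward identity $\sum_\mu |G^{(i)}_{\mu\nu}|^2 = \Im G^{(i)}_{\nu\nu}/\eta$ handles $Z_i$.

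For $\Lambda_o$, the Schur-type formula (\ref{resolvent3}) gives $G_{\mu\nu} = G_{\mu\mu}G^{(\mu)}_{\nu\nu}(Y^*G^{(\mu\nu)}Y)_{\mu\nu}$, where on $\Xi$ the prefactor is $O(1)$ and the inner quadratic form is a two-sample expression in the independent families $\{X_{i\mu}\}_i$ and $\{X_{j\nu}\}_j$ (since $\mu \ne \nu$). The two-sample bound of Lemma \ref{largederivation}, combined with the same Ward-identity estimate for $\sum_{i,j}\sigma_i\sigma_j|G^{(\mu\nu)}_{ij}|^2$, yields $|G_{\mu\nu}| \lesssim \varphi^{2\xi}(q + \Psi_\theta)$. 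The remaining off-diagonal entries $G_{ij}$ for $i \ne j \in \mathcal I_1$, together with the mixed $G_{i\mu}$ and $G_{\mu i}$, are treated analogously via (\ref{resolvent3}) and (\ref{resolvent6}); in the mixed case the isolated $-Y_{i\mu}$ term contributes only $O(q)$ by the bounded-support assumption. Absorbing the resulting $q\Lambda_o$ term onto the left (possible since $q = o(1)$) then closes (\ref{Zestimate1}).

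Estimate (\ref{Zestimate2}) follows by the same argument with the event $\Xi$ removed. When $\eta \ge 1$, the deterministic bound (\ref{eq_gbound}) gives $\|G^{(\mathbb T)}\| \le 1$ and Lemma \ref{lem_mbehavior} gives $\Im m_{2c} \sim 1$, so all entries of the partial Green functions are uniformly $O(1)$ without conditioning, and the same chain of estimates goes through. The main technical obstacle in the whole argument is the Ward-identity step: one must carefully convert the partial Green function traces back to their full counterparts and fold in the self-consistent equation so that the $\ell^2$-sum produces $\Psi_\theta$ (the stochastic parameter involving $\theta$) rather than the deterministic $\Psi$, since otherwise one cannot later bootstrap to the averaged local law (\ref{MPBOUNDS}) without circular reasoning.
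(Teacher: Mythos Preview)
Your overall strategy is correct and matches the paper's. There is, however, a genuine gap in your treatment of $Z_\mu$ (and, by the same reasoning, of the off-diagonal entries $G_{ij}$ with $i,j\in\mathcal I_1$). After applying the Ward identity you pass through the decomposition $G_{jj}=\Pi_{jj}+(G_{jj}-\Pi_{jj})$ and invoke the self-consistent equation (\ref{deformed_MP21}) for the $\Pi$-part. The second piece then contributes $\frac{1}{N}\sum_j\sigma_j\,|G_{jj}-\Pi_{jj}|=O(\Lambda)$, so the $\ell^2$-sum is controlled by $\sqrt{(\Im m_{2c}+\Lambda)/(N\eta)}$ rather than $\Psi_\theta$. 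On the event $\Xi$ you only know $\Lambda\le(\log N)^{-1}$, and there is no a priori inequality $\Lambda\lesssim \Im m_{2c}+\theta+(N\eta)^{-1}$, so your claimed conclusion ``the last term is $O(\varphi^{2\xi}\Psi_\theta)$'' does not follow. This matters downstream: the continuity argument and the iteration toward (\ref{MPBOUNDS}) rely on having $\theta$, not $\Lambda$, inside the control parameter.

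The fix is actually simpler than what you attempted. Do not split off $\Pi$ or use the self-consistent equation at all. Just bound the population weights by $\sigma_i\le\tau^{-1}$, so that $\sum_{i,j}\sigma_i\sigma_j|G^{(\mu)}_{ij}|^2\le\tau^{-2}\sum_{i,j}|G^{(\mu)}_{ij}|^2$, and the Ward identity gives $\sum_{i,j}|G^{(\mu)}_{ij}|^2=|z|^2M\,\Im m_1^{(\mu)}/\eta$. Now use (\ref{barm}) (which holds verbatim for the minor up to $O((N\eta)^{-1})$) and (\ref{m_T}) to get $\Im m_1^{(\mu)}=O\bigl(\Im m_{2c}+\theta+(N\eta)^{-1}\bigr)$, since $\Im(-(1-d)/z)=O(\eta)=O(\Im m_{2c})$ by (\ref{SQUAREROOTBEHAVIOR}). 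This yields the desired $\Psi_\theta$ bound directly, and is exactly the route the paper takes. Your remark that one must ``fold in the self-consistent equation'' to obtain $\Psi_\theta$ rather than $\Psi$ is therefore misplaced: the averaged parameter $\theta$ enters through $|\Im m_2-\Im m_{2c}|\le\theta$, not through any comparison with $\Pi$.
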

\begin{proof}
 Applying the large deviation Lemma \ref{largederivation} to $Z_{i}$ in (\ref{Zi}), we get that on $\Xi$,
\begin{align}\label{estimate_Zi}
\left| Z_{i}\right| \le & C \varphi^{2\xi} \left[ q +\frac{1}{N} \left( \sum_{\mu, \nu} {\left| G_{\mu\nu}^{(i)}  \right|^2 }  \right)^{1/2}  \right] \nonumber\\
= & C \varphi^{2\xi} \left[ q + \frac{1}{N}\left( {\sum_\mu \frac{{\mathop{\rm Im}\nolimits}\, G_{\mu\mu}^{(i)} }{\eta} } \right)^{1/2}\right] = C \varphi^{2\xi} \left[ q + \sqrt { \frac{ \Im\, m_2^{(i)}  } {N\eta} }\right]
\end{align}
holds with $\xi$-high probability, where we used (\ref{assm3}), (\ref{eq_gsq1}) and the fact that $\max_{a,b}|G_{ab}|= O(1)$ on event $\Xi$. Now using the bound (\ref{m_T}) and the definitions (\ref{eqn_randomerror}), (\ref{eq_defpsitheta}), we get that
\begin{align}\label{m2psi}
\sqrt{\frac{{\Im\, m_2^{(i)} }}{N\eta} } = \sqrt {\frac{{\Im\,m_{2c}  + \Im ( {m_2^{(i)}  - m_2 }) + \Im ( {m_2  - m_{2c} } )}}{{N\eta }}}  \le C \Psi _\theta .
\end{align}
Together with (\ref{estimate_Zi}), we conclude that ${\mathbf 1}(\Xi) |Z_{i}|  \le C\varphi^{2\xi}\left(q+\Psi_\theta\right)$ with $\xi$-high probability. Similarly, we can prove the same estimate for ${\mathbf 1}(\Xi) |Z_{\mu}| $. In the proof, we also need to use (\ref{barm}) and
$$\Im \left( -\frac{d-1}{z}\right) = O(\eta) = O(\Im\, m_{2c}(z)).$$
If $\eta\ge 1$, we always have $\max_{a,b}|G_{ab}|= O(1)$ by (\ref{eq_gbound}). Then repeating the above proof, we obtain that ${\mathbf 1}(\eta\ge 1) |Z_{a}|  \le C\varphi^{2\xi}\left(q+\Psi_\theta\right)$ with $\xi$-high probability.

Similarly, using (\ref{resolvent3}) and Lemmas \ref{Ward_id}-\ref{largederivation}, we can prove that
\begin{equation}\label{2blocks}
{\mathbf 1}(\Xi) \left( |G_{ij}| + |G_{\mu\nu}|\right) \le C\varphi^{2\xi}\left(q+\Psi_\theta\right), 
\end{equation}
holds uniformly for $i\ne j$ and $\mu\ne \nu$ with $\xi$-high probability. It remains to prove the bound for $G_{i\mu}$ and $G_{\mu i}$. Using (\ref{resolvent6}), the bounded support condition (\ref{eq_support}) for $X_{i\mu}$, the bound $\max_{a,b}|G_{ab}|= O(1)$ on $\Xi$, and Lemma \ref{largederivation}, we get that with $\xi$-high probability,
\begin{align}
\left|G_{i\mu }\right| & \le C\left( q +  \left| \sum_{j,\nu}^{(i\mu)}{X_{i\nu}G_{\nu j}^{\left( {i\mu } \right)} X_{j\mu}}\right|   \right) \le C\varphi^{2\xi}\left[q+  \frac{1}{N}\left( {\sum^{(i\mu)}_{j,\nu } {\left| {G_{\nu j}^{(i\mu)} } \right|^2 } } \right)^{1/2} \right]  \nonumber\\
& \le C\varphi^{2\xi}\left[q+  \frac{1}{N}\left( \sum_{\nu}\left({G}^{(i\mu)}_{\nu \nu}  + \frac{\bar z}{\eta} \Im \, G_{\nu\nu}^{(i\mu)}\right)\right)^{1/2} \right]   \le C\varphi^{2\xi}\left[q+\sqrt{ \frac{|m_2^{(i\mu)}|}{N} } + \sqrt{\frac{ \Im \, m_2^{(i\mu)}}{N\eta}} \right]  ,\label{off_larged}
\end{align}
where in the third step we used (\ref{eq_gsq3}). As in (\ref{m2psi}), we can show that
\begin{equation}\label{estimatel1} \sqrt{\frac{ \Im \, m_2^{(i\mu)}}{N\eta}}= O(\Psi_\theta).\end{equation}
For the other term, we have
\begin{align}
 \sqrt{ \frac{|m_2^{(i\mu)}|}{N} } \le  \sqrt {\frac{|m_{2c}|  + |m_2^{(i\mu)}  - m_2| + |m_2  - m_{2c} |}{{N}}}   & \le C\left( \frac{1}{N\sqrt{\eta}}+\sqrt { \frac{\theta }{{N}}}  + \sqrt {\frac{{\left| {m_{2c} } \right|}}{N}}  \right)  \le C \Psi _\theta , \label{estimatel2}
\end{align}
where we used (\ref{m_T}), and that 
$$\frac{{\left| {m_{2c} } \right|}}{{N}} =O\left(\frac{{\Im\, m_{2c} }}{N\eta}\right), $$
since $|m_{2c}|= O(1)$ and $\Im\, m_{2c} \ge c\eta$ by Lemma \ref{lem_mbehavior}. Hence from (\ref{off_larged}), (\ref{estimatel1}) and (\ref{estimatel2}), we obtain that ${\mathbf 1}(\Xi) |G_{i\mu}|  \le C\varphi^{2\xi}\left(q+\Psi_\theta\right)$ with $\xi$-high probability. Together with (\ref{2blocks}), we get the estimate in (\ref{Zestimate1}) for $\Lambda_o$. Finally, the estimate (\ref{Zestimate2}) can be proved in a similar way with the bound $\mathbf{1}(\eta\ge 1)\max_{a,b}|G_{ab}|= O(1)$.
\end{proof}

Our proof of the local law starts with an analysis of the self-consistent equation. Recall that $m_{2c}(z)$ is the solution to the equation $z=f(m)$ for $f$ defined in (\ref{deformed_MP2}).

\begin{lem}\label{lemm_selfcons_weak}
Let $c_0>0$ be sufficiently small. Fix $C_0>0$, $\xi \ge 3$ and $C_1 \ge 8\xi $. Then there exists $C>0$ such that the following estimates hold uniformly in $z \in S(c_0, C_0,C_1)$ with $\xi$-high probability:
\begin{equation}
{\mathbf 1}(\eta \ge 1)\left|z- f(m_2) \right|\le C\varphi^{2\xi }(q+N^{-1/2}), \label{selfcons_lemm2}
\end{equation}
and
\begin{equation}
{\mathbf 1}(\Xi)\left|z - f(m_2) \right|\le C\varphi^{2\xi }(q+\Psi_\theta) ,  \label{selfcons_lemm}
\end{equation}
where $\Xi$ is defined in Lemma \ref{Z_lemma}. 
Moreover, we have the finer estimates
\begin{equation}
{\mathbf 1}(\Xi)\left( z - f(m_2) \right)= \mathbf 1(\Xi) \left([Z]_1 + [Z]_2\right) + O\left(\varphi^{4\xi }\left(q^2+\Psi^2_\theta\right) \right),\ \label{selfcons_improved}
\end{equation}
with $\xi_1$-high probability, where
\begin{equation}\label{def_Zaver}
[Z]_1:=\frac{1}{N}\sum_{i\in \mathcal I_1} \frac{\sigma_i}{(1+m_2\sigma_i)^2} Z_i, \ \ [Z]_2:=\frac{1}{N}\sum_{\mu \in \mathcal I_2} Z_\mu.
\end{equation}
\end{lem}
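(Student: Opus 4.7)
The plan is to derive a scalar equation for $m_2$ by manipulating the Schur-complement identity (\ref{resolvent2}) so as to avoid partial-trace errors, and then inverting around the claimed self-consistent equation $z = f(m_2)$. The two main inputs are Lemma \ref{lem_assm3}, which guarantees $|1+\sigma_i m_2|\ge c$ uniformly on the event $\Xi$, and Lemma \ref{Z_lemma}, which gives $\Lambda_o,\max_a|Z_a|\le C\varphi^{2\xi}(q+\Psi_\theta)$ on $\Xi$.

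First, multiplying the identity $G_{\mu\mu}^{-1} = -z - N^{-1}\sum_i\sigma_i G_{ii}^{(\mu)}+ Z_\mu$ through by $G_{\mu\mu}$, substituting $G_{ii}^{(\mu)} = G_{ii} - G_{i\mu}G_{\mu i}/G_{\mu\mu}$ from (\ref{resolvent8}), and averaging over $\mu\in\mathcal I_2$ yields
\[
1 = -z\,m_2 - \frac{m_2}{N}\sum_{i\in\mathcal I_1}\sigma_i G_{ii} + \frac{1}{N^2}\sum_{\mu,i}\sigma_i G_{i\mu}G_{\mu i} + \frac{1}{N}\sum_\mu G_{\mu\mu} Z_\mu.
\]
This identity is critical because it involves $G_{ii}$ and not $G_{ii}^{(\mu)}$, thus avoiding the naive $O((N\eta)^{-1})$ loss from (\ref{m11_T}). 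Next, from $G_{ii}^{-1} = -(1+\sigma_i m_2) + \sigma_i(m_2^{(i)}-m_2) + Z_i$ and the stability $|1+\sigma_i m_2|\ge c$ on $\Xi$, I Taylor-expand
\[
G_{ii} = -\frac{1}{1+\sigma_i m_2} + \frac{\sigma_i(m_2^{(i)}-m_2) - Z_i}{(1+\sigma_i m_2)^2} + O\bigl(|Z_i|^2 + (m_2^{(i)}-m_2)^2\bigr),
\]
with $m_2^{(i)}-m_2 = -(N G_{ii})^{-1}\sum_\mu G_{\mu i}G_{i\mu}$ obtained by applying (\ref{resolvent8}) to the diagonal $\mu$-entries of $G^{(i)}$ and averaging.

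Substituting the expansion back, recognising both $\tfrac{1}{N}\sum_i \sigma_i/(1+\sigma_i m_2) = d^{-1}\int x(1+m_2 x)^{-1}\pi_N(dx)$ and that the coefficient $\sigma_i/(1+\sigma_i m_2)^2$ of $-Z_i$ matches the weight in the definition of $[Z]_1$ in (\ref{def_Zaver}), and finally dividing by $m_2$ (which is bounded away from $0$ on $\Xi$ since $m_2\approx m_{2c}$) produces
\[
\frac{1}{m_2} = -z + d^{-1}\!\int\frac{x}{1+m_2 x}\,\pi_N(dx) + [Z]_1 + [Z]_2 + \mathcal R.
\]
The remainder $\mathcal R$ collects three contributions: (a) quadratic residuals from the Taylor expansion, bounded by $C(\max_i|Z_i|^2 + \max_i(m_2^{(i)}-m_2)^2) \le C\varphi^{4\xi}(q^2+\Psi_\theta^2)$ via Lemma \ref{Z_lemma}; (b) the cross term $(1/Nm_2)\sum_\mu(G_{\mu\mu}-m_2)Z_\mu = O(\Lambda_o \cdot \max_\mu |Z_\mu|) = O(\varphi^{4\xi}(q^2+\Psi_\theta^2))$; and (c) a Ward-type combination of $(1/N^2)\sum_{i,\mu}\sigma_i G_{i\mu}G_{\mu i}$ and $(1/N^2)\sum_{i,\mu}\sigma_i^2 G_{\mu i}G_{i\mu}/[(1+\sigma_i m_2)^2 G_{ii}]$, arising from the bilinear sums above.

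For the Ward-type block (c), the $i$-coefficients are uniformly bounded on $\Xi$, so I bound the whole block by $(C/N^2)\sum_{\mu,i}|G_{i\mu}||G_{\mu i}|$. Applying Cauchy-Schwarz in $i$ and then the Ward identity $\sum_i|G_{i\mu}|^2 = G_{\mu\mu} + (\bar z/\eta)\Im G_{\mu\mu}$ from (\ref{eq_gsq3}) gives an upper bound $C(1/N + \Im m_2/(N\eta))$; on $\Xi$, $\Im m_2$ is close to $\Im m_{2c}$, and both $1/N$ and $\Im m_{2c}/(N\eta)$ are at most $C\Psi_\theta^2$ (the second by definition of $\Psi_\theta$, the first because $\Im m_{2c}\ge c\eta$ on $S(c_0,C_0,C_1)$ by Lemma \ref{lem_mbehavior}). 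This yields (\ref{selfcons_improved}). The weaker bound (\ref{selfcons_lemm}) then follows from $|[Z]_1|+|[Z]_2|\le C\varphi^{2\xi}(q+\Psi_\theta)$ via Lemma \ref{Z_lemma}, and (\ref{selfcons_lemm2}) is proved identically with $\Xi$ replaced by the deterministic bound (\ref{eq_gbound}) and (\ref{Zestimate1}) by (\ref{Zestimate2}). The main obstacle is precisely the Ward-type step in (c): applying (\ref{m11_T}) naively to swap $G_{ii}^{(\mu)} \to G_{ii}$ costs $O((N\eta)^{-1})$, which near the soft edge is much larger than $\Psi_\theta^2$; only by expressing the partial-trace discrepancy as the bilinear $G_{i\mu}G_{\mu i}$ sum and then using (\ref{eq_gsq3}) does one recover the factor $\Im m_{2c}$ needed for the bound to fit inside $\Psi_\theta^2$.
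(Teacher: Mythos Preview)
Your approach is essentially the same as the paper's: substitute the Schur identity for $G_{ii}$ into that for $G_{\mu\mu}$, average, and Taylor-expand around the self-consistent solution. The paper organizes this slightly differently --- it averages $G_{\mu\mu}^{-1}$ and expands $G_{\mu\mu}^{-1}=m_2^{-1}-m_2^{-2}(G_{\mu\mu}-m_2)+\dots$, whereas you multiply through by $G_{\mu\mu}$ first --- but the algebra is equivalent.

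Two small points to tighten. First, in your term (b) you write $|G_{\mu\mu}-m_2|=O(\Lambda_o)$, but $\Lambda_o$ is by definition the \emph{off}-diagonal maximum. The bound $|G_{\mu\mu}-m_2|\le C\varphi^{2\xi}(q+\Psi_\theta)$ is correct, but it comes from comparing $G_{\mu\mu}^{-1}-G_{\nu\nu}^{-1}=\epsilon_\nu-\epsilon_\mu$ (the paper's (\ref{diagonal_compare})--(\ref{average_bound})), not from $\Lambda_o$. Second, your last sentence overstates the necessity of the Ward step in (c): the paper simply bounds $|G_{i\mu}G_{\mu i}/G_{\mu\mu}|\le C\Lambda_o^2\le C\varphi^{4\xi}(q+\Psi_\theta)^2$ entrywise via Lemma~\ref{Z_lemma} and sums, which already lands inside $\Psi_\theta^2$ without invoking (\ref{eq_gsq3}). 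Your Ward argument is a valid alternative, just not the only route.

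Finally, your treatment of (\ref{selfcons_lemm2}) is too quick. For $\eta\ge 1$ you cannot just cite the deterministic bound (\ref{eq_gbound}): the Taylor expansion and the division by $m_2$ both require a priori lower bounds $|m_2|\ge c$ and $|1+\sigma_i m_2|\ge c$, which are not automatic on $\{\eta\ge 1\}$ since you are no longer on the event $\Xi$. The paper supplies these separately: $\Im\,G_{\mu\mu}\ge c\eta$ is shown from $\Im\,G_{ii}\ge 0$ and the Schur identity, and then $|1+\sigma_i m_2|\ge c$ follows by case analysis on $\sigma_i$ (see (\ref{estimate_m2L})--(\ref{estimate_m23})). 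You should insert the analogous step.
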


\begin{proof}
We first prove (\ref{selfcons_improved}), from which (\ref{selfcons_lemm}) follows due to (\ref{Zestimate1}) and (\ref{assumption3}). By (\ref{resolvent2}), (\ref{Zi}) and (\ref{Zmu}), we have
\begin{equation}\label{self_Gii}
\frac{1}{{G_{ii} }}=  - 1 - \frac{\sigma_i}{N} \sum_{\mu\in \mathcal I_2}G^{\left( i \right)}_{\mu\mu}+ Z_i =  - 1 - \sigma_i m_2 + \epsilon_i,
\end{equation}
and
\begin{equation}\label{self_Gmu}
\frac{1}{{G_{\mu\mu} }}=  - z - \frac{1}{N} \sum_{i\in \mathcal I_1}\sigma_i G^{\left( \mu\right)}_{ii}+ Z_{\mu} =  - z - \frac{1}{N} \sum_{i\in \mathcal I_1}\sigma_i G_{ii} + \epsilon_\mu,
\end{equation}
where 
$$\epsilon_i := Z_i + \sigma_i\left(m_2 - m_2^{(i)}\right) \ \ \text{and} \ \ \epsilon_\mu := Z_{\mu} + \frac{1}{N} \sum_{i\in \mathcal I_1}\sigma_i \left(G^{\left( \mu\right)}_{ii}-G_{ii}\right).$$
Using (\ref{m_T}), (\ref{m11_T}) and (\ref{Zestimate1}), we have for all $i$ and $\mu$,
\begin{equation}\label{erri}
\mathbf 1(\Xi)\left(|\epsilon_i | + |\epsilon_\mu| \right)\le C\varphi^{2\xi } (q + \Psi_\theta), 
\end{equation}
with $\xi $-high probability.
Then using (\ref{self_Gmu}), we get that for any $\mu$ and $\nu$,
\begin{equation}\label{diagonal_compare}
\mathbf 1(\Xi)(G_{\mu\mu} - G_{\nu\nu}) = \mathbf 1(\Xi)G_{\mu\mu} G_{\nu\nu} (\epsilon_\nu - \epsilon_\mu) = O\left(\varphi^{2\xi } (q + \Psi_\theta) \right),
\end{equation}
with $\xi $-high probability. This implies that 
\begin{equation}\label{average_bound}
\mathbf 1(\Xi)|G_{\mu\mu}-m_2| \le C\varphi^{2\xi } (q + \Psi_\theta), \ \ \mu \in \mathcal I_2,
\end{equation}
with $\xi $-high probability. 

Now we plug (\ref{self_Gii}) into (\ref{self_Gmu}) and take the average $N^{-1}\sum_\mu$. Note that we can write 
$$\frac{1}{G_{\mu\mu}} = \frac{1}{m_2} - \frac{1}{m_2^2}(G_{\mu\mu} - m_2) + \frac{1}{m_2^2}(G_{\mu\mu} - m_2)^2\frac{1}{G_{\mu\mu}}.$$
After taking the average, the second term on the right-hand side vanishes and the third term provides a $ O(\varphi^{4\xi } (q + \Psi_\theta)^2)$ factor by (\ref{average_bound}). On the other hand, using (\ref{resolvent8}) and (\ref{Zestimate1}) we get
\begin{equation*}
1(\Xi)\left| \frac{1}{N} \sum_{i\in \mathcal I_1}\sigma_i \left(G^{\left( \mu\right)}_{ii}-G_{ii}\right)\right| \le 1(\Xi) \frac{1}{N} \sum_{i\in \mathcal I_1}\sigma_i \left|\frac{G_{i\mu} G_{\mu i}}{G_{\mu\mu}}\right|\le  C\varphi^{4\xi } (q + \Psi_\theta)^2,
\end{equation*}
and
\begin{equation*}
\mathbf 1(\Xi)|m_2 - m_2^{(i)}| \le \mathbf 1(\Xi)\frac{1}{N}\sum_{\mu\in \mathcal I_2} \left|\frac{G_{\mu i} G_{i\mu}}{G_{ii}}\right| \le  C\varphi^{4\xi } (q + \Psi_\theta)^2,
\end{equation*}
with $\xi$-high probability. Hence the average of (\ref{self_Gmu}) gives
\begin{equation*}
\mathbf 1(\Xi) \frac{1}{m_2} =  \mathbf 1(\Xi)\left\{   \frac{1}{N} \sum_{i\in \mathcal I_1} \frac{\sigma_i}{1+\sigma_i m_2 - Z_i + O\left(\varphi^{4\xi } (q + \Psi_\theta)^2\right)} + [Z]_2 \right\} +  O\left(\varphi^{4\xi } (q + \Psi_\theta)^2 \right) ,
\end{equation*}
with $\xi$-high probability. Finally, using (\ref{assumption3}) and the definition of $\Xi$ we can expand the fractions in the sum to get that
\begin{align*}
\mathbf 1(\Xi)\left\{z + \frac{1}{m_2} - \frac{1}{N} \sum_{i\in \mathcal I_1} \frac{\sigma_i}{1+\sigma_i m_2}\right\} &=  \mathbf 1(\Xi)\left([Z]_1+ [Z]_2 \right) +  O\left(\varphi^{4\xi } (q + \Psi_\theta)^2 \right) .
\end{align*}
This concludes (\ref{selfcons_improved}).



Then we prove (\ref{selfcons_lemm2}). Using the bound $\mathbf{1}(\eta\ge 1)\max_{a,b}|G_{ab}|=O(1)$, it is easy to see that $m_2=O(1)$ and $\theta=O(1)$. Thus we have $1(\eta\ge 1) \Psi_\theta =O(N^{-1/2})$ and (\ref{erri}) gives
\begin{equation}\label{epsilonL}
1(\eta\ge 1) (|\epsilon_i|+|\epsilon_\mu|) \le C\varphi^{2\xi} (q + N^{-1/2}),
\end{equation}
with $\xi$-high probability. First, we claim that for $\eta \ge 1$, 
\begin{equation}\label{estimate_m2L}
|m_2| \ge \Im\, m_2 \ge c \ \text{ with }\xi\text{-high probability}, 
\end{equation} 
for some constant $c>0$. By the spectral decomposition (\ref{spectral1}), we have
$$\Im\, G_{ii} = \Im \sum\limits_{k = 1}^{M} \frac{z|\xi_k(i)|^2}{\lambda_k-z}= \sum\limits_{k = 1}^{M} |\xi_k(i)|^2 \Im \left( -1 + \frac{\lambda_k}{\lambda_k-z}\right)\ge 0.$$
Then by (\ref{self_Gmu}), $G_{\mu\mu}^{-1}$ is of order $O(1)$ and has an imaginary part $\le - \eta + O\left(\varphi^{2\xi} (q + N^{-1/2})\right)$. This implies that $ \Im\, G_{\mu\mu} \ge c\eta$ with $\xi$-high probability, which concludes (\ref{estimate_m2L}). Next, we claim that 
\begin{equation}\label{estimate_m23}
| 1+ \sigma_i m_2| \ge c \ \text{ with }\xi\text{-high probability}, 
\end{equation} 
for some constant $c>0$. In fact, if $\sigma_i \le 1/(2m_2)$, we trivially have $| 1+ \sigma_i m_2|\ge 1/2$. Otherwise, we have $\sigma_i \sim 1$ by (\ref{estimate_m2L}) and $|1+ \sigma_i m_2| \ge \sigma_i \Im\, m_2 \ge c$. Finally, with (\ref{epsilonL}), (\ref{estimate_m2L}) and (\ref{estimate_m23}), we can repeat the previous proof to get (\ref{selfcons_lemm2}).
\end{proof}

The following lemma gives the stability of the equation $z- f(m)=0$. Roughly, it states that if $z - f(m_{2}(z))$ is small and $m_2(\tilde z)-m_{2c}(\tilde z)$ is small for $\tilde z \ge z$, then $m_{2}(z)-m_{2c}(z)$ is small. For an arbitrary $z\in S(c_0,C_0, C_1)$, we define the discrete set
\begin{align}\label{eqn_def_L}
L(w):=\{z\}\cup \{z'\in S(C_1): \text{Re}\, z' = \text{Re}\, z, \text{Im}\, z'\in [\text{Im}\, z, 1]\cap (N^{-10}\mathbb N)\} .
\end{align}
Thus, if $\text{Im}\, z \ge 1$ then $L(z)=\{z\}$; if $\text{Im}\, z<1$ then $L(z)$ is a 1-dimensional lattice with spacing $N^{-10}$ plus the point $z$. Obviously, we have $|L(z)|\le N^{10}$. 

\begin{lem}\label{stability}
The self-consistent equation $z - f(m)=0$ is stable on $S(c_0,C_0, C_1)$ in the following sense. Suppose the $z$-dependent function $\delta$ satisfies $N^{-2} \le \delta(z) \le (\log N)^{-1}$ for $z\in S(c_0,C_0, C_1)$ and that $\delta$ is Lipschitz continuous with Lipschitz constant $\le N^2$. Suppose moreover that for each fixed $E$, the function $\eta \mapsto \delta(E+i\eta)$ is non-increasing for $\eta>0$. Suppose that $u_2: S(c_0,C_0,C_1)\to \mathbb C$ is the Stieltjes transform of a probability measure. Let $z\in S(c_0,C_0,C_1)$ and suppose that for all $z'\in L(z)$ we have 
\begin{equation}\label{Stability0}
\left| z- f(u_2)\right| \le \delta(z).
\end{equation}
Then we have
\begin{equation}
\left|u_2(z)-m_{2c}(z)\right|\le \frac{C\delta}{\sqrt{\kappa+\eta+\delta}},\label{Stability1}
\end{equation}
for some constant $C>0$ independent of $z$ and $N$, where $\kappa$ is defined in (\ref{KAPPA}). 
\end{lem}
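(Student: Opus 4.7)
The plan is to convert the hypothesis into a quadratic inequality for $y(z) := u_2(z) - m_{2c}(z)$ and then select the correct branch of solutions by a bootstrap along the discrete lattice $L(z)$.

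\textbf{Step 1 (Structure of $f$ near the edge).} Recall that $\lambda_r = f(b_1)$ with $f'(b_1) = 0$ by (\ref{equationEm2}). Assumption \ref{assm_big2} together with Lemma \ref{lem_mbehavior} (more precisely, the square-root behaviour (\ref{SQUAREROOT})) implies that $f''(b_1)$ is bounded away from $0$ and that $f'''$ is bounded on a neighbourhood of $b_1$, so in particular $b_1$ is a non-degenerate critical point of $f$. Inverting $z - \lambda_r = \tfrac12 f''(b_1)(m_{2c}(z)-b_1)^2 + O(|m_{2c}(z)-b_1|^3)$ gives $|m_{2c}(z) - b_1| \sim \sqrt{\kappa+\eta}$, hence
\[
|f'(m_{2c}(z))| \sim \sqrt{\kappa+\eta}, \qquad |f''(m_{2c}(z))| \sim 1, \qquad |f'''(m)| = O(1)
\]
uniformly for $z \in S(c_0,C_0,C_1)$ provided $c_0$ is sufficiently small.

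\textbf{Step 2 (Quadratic inequality).} A Taylor expansion of $f$ around $m_{2c}(z)$ together with $f(m_{2c}(z)) = z$ gives
\[
z - f(u_2(z)) \;=\; -a(z)\, y(z) \;-\; b(z)\, y(z)^2 \;+\; O(|y(z)|^3),
\]
where $a(z) := f'(m_{2c}(z))$ and $b(z) := \tfrac12 f''(m_{2c}(z))$. The hypothesis (\ref{Stability0}) together with Step 1 yields, as long as $|y| \le c_1$ for some small $c_1$ absorbing the cubic error,
\[
|y(z)|\,|a(z) + b(z)\, y(z)| \;\le\; 2\delta(z).
\]
This quadratic inequality has two branches of solutions: a \emph{good} branch on which $|y| \le C\delta/\sqrt{\kappa+\eta+\delta}$ (obtained when $|a + by| \gtrsim \sqrt{\kappa+\eta+\delta}$), and a \emph{bad} branch on which $|y| \gtrsim \sqrt{\kappa+\eta+\delta}$ (clustered near the other root $-a/b$ of $ay + by^2 = 0$). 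The bound (\ref{Stability1}) is precisely the good-branch estimate, so it remains to rule out the bad branch at the target point $z$.

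\textbf{Step 3 (Continuity along $L(z)$).} I treat the top of the lattice first. If $\mathrm{Im}\, z \ge 1$ then $L(z) = \{z\}$ and one checks directly that $|y| = O(\delta)$: both $u_2$ and $m_{2c}$ are Stieltjes transforms of probability measures so $|u_2|,|m_{2c}| \le 1$, while $\mathrm{Im}\, m_{2c}(E+i) \sim 1$ forces $y$ into a small neighbourhood of $0$ (the bad-branch value $-a/b$ is separated from $0$ by an order-one distance, and the Stieltjes-transform constraint $\mathrm{Im}\, u_2 \ge 0$ combined with the explicit form of $a/b$ rules it out). If $\mathrm{Im}\, z < 1$, start the bootstrap at the top lattice point $z^{(\mathrm{top})} = \mathrm{Re}\, z + i$ and decrease $\mathrm{Im}\, z'$ step-by-step through $L(z)$. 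Because $u_2$ and $m_{2c}$ are both Stieltjes transforms of probability measures, the derivative bound $|\partial_z u_2|,|\partial_z m_{2c}| \le \eta^{-2} \le N^2$ holds on $S(c_0,C_0,C_1)$, so $|y|$ changes by at most $2 N^{-8}$ between consecutive lattice points (spacing $N^{-10}$). The branch gap $\gtrsim \sqrt{\kappa+\eta+\delta} \gtrsim N^{-1/2}\varphi^{C_1/2}$ (using $\eta \ge \varphi^{C_1}/N$ and $\delta \ge N^{-2}$) is much larger than $N^{-8}$, so a continuous jump from the good to the bad branch is impossible. Induction along the lattice concludes that $y(z)$ lies on the good branch, giving (\ref{Stability1}).

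\textbf{Main obstacle.} The delicate point is Step 1: the uniform lower bound $|f'(m_{2c}(z))| \gtrsim \sqrt{\kappa+\eta}$ throughout $S(c_0,C_0,C_1)$ is where Assumption \ref{assm_big2} is used in an essential way, ruling out the possibility that a pole $m = -\sigma_i^{-1}$ of $f$ approaches $b_1$ and destroys the clean quadratic behaviour. Once the regular-edge expansion of $f$ is in hand, the remaining analysis is the now-standard quadratic-branch / continuity bootstrap, for which the discrete lattice $L(z)$ exists precisely to interpolate between hypotheses stated at countably many points and the pointwise conclusion at $z$.
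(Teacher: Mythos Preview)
The paper does not give its own proof of this lemma but simply cites \cite[Appendix A.2]{KY2}; your sketch---Taylor-expand $f$ about $m_{2c}$ to reduce the hypothesis to a quadratic inequality in $y=u_2-m_{2c}$, identify the good and bad branches, and then rule out the bad branch by a continuity bootstrap along the discrete lattice $L(z)$ using the $\eta^{-2}$ Lipschitz bound on Stieltjes transforms---is exactly the argument carried out in that reference. Your identification of Assumption~\ref{assm_big2} as the input guaranteeing the regular square-root scaling $|f'(m_{2c}(z))|\sim\sqrt{\kappa+\eta}$ (hence a nondegenerate quadratic) is also correct.
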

\begin{proof}
This result is proved in \cite[Appendix A.2]{KY2}
\end{proof}

Note that by Lemma \ref{stability} and (\ref{selfcons_lemm2}), we immediately get that
\begin{equation}\label{average_L}
\mathbf 1(\eta\ge 1)\theta(z) \le C\varphi^{2\xi}(q+ N^{-1/2}),
\end{equation}
with $\xi$-high probability. From (\ref{Zestimate2}), we obtain the off-diagonal estimate
\begin{equation}\label{offD_L}
\mathbf 1(\eta\ge 1)\Lambda_o(z) \le C\varphi^{2\xi}(q+ N^{-1/2})
\end{equation}
with $\xi$-high probability. Using (\ref{average_bound}), (\ref{self_Gii}) and (\ref{average_L}), we get that 
\begin{equation}\label{diag_L}
\mathbf 1(\eta\ge 1)\left(\left|G_{ii}+(1+\sigma_i m_{2c})^{-1}\right| + |G_{\mu\mu}-m_{2c}|\right) \le C\varphi^{2\xi} (q + N^{-1/2}),
\end{equation}
with $\xi$-high probability, which gives the diagonal estimate. These bounds can be easily generalized to the case $\eta \ge c$ for some constant $c>0$. Comparing with (\ref{DIAGONAL}), one can see that the bounds (\ref{offD_L}) and (\ref{diag_L}) are optimal for $\eta\ge c$ case. Now it remains to deal with the small $\eta$ case (in particular, the local case with $\eta\ll 1$). We first prove the following weak bound.

\begin{lem}\label{alem_weak} 
Let $c_0>0$ be sufficiently small. Fix $C_0>0$, $\xi \ge 3$ and $C_1 \ge 8\xi$. Then there exists $C>0$ such that with $\xi_1$-high probability,
\begin{equation} \label{localweakm}
\Lambda(z) \le C\varphi^{2\xi}\left(\sqrt{q}+ (N\eta)^{-1/3}\right),
\end{equation}
holds uniformly in $z \in S(c_0,C_0,C_1)$.
\end{lem}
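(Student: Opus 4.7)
My approach is the standard bootstrapping argument used to establish weak local laws: I will decrease $\eta$ from a scale where the bound is trivial down to $\varphi^{C_1}N^{-1}$, maintaining control on the event $\Xi = \{\Lambda \le (\log N)^{-1}\}$ at each step. The key ingredients are already in place: Lemma \ref{lemm_selfcons_weak} converts control of $\Lambda$ into an approximate self-consistent equation for $m_2$, Lemma \ref{stability} converts that into a bound for $\theta = |m_2 - m_{2c}|$, and Lemma \ref{Z_lemma} then upgrades control of $\theta$ into control of all off-diagonal entries.

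\emph{Step 1 (initialization).} For $\eta \ge 1$, I have already noted in (\ref{average_L})--(\ref{diag_L}) that $\Lambda(z) \le C\varphi^{2\xi}(q + N^{-1/2})$ with $\xi$-high probability, which is far better than the bound claimed, and in particular forces $\Xi(z)$ to hold with $\xi$-high probability.

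\emph{Step 2 (self-improving inequality on $\Xi$).} On $\Xi$, Lemma \ref{lemm_selfcons_weak} gives $|z - f(m_2)| \le C\varphi^{2\xi}(q + \Psi_\theta)$. Setting $\delta := C\varphi^{2\xi}(q + \Psi_\theta)$ and applying the stability Lemma \ref{stability} on the lattice $L(z)$ yields
\begin{equation*}
\theta(z) \;\le\; \frac{C\delta}{\sqrt{\kappa+\eta+\delta}} \;\le\; C\sqrt{\delta} \;\le\; C\varphi^{\xi}\sqrt{q + \Psi_\theta}.
\end{equation*}
Recalling $\Psi_\theta = \sqrt{(\Im\, m_{2c}+\theta)/(N\eta)} + (N\eta)^{-1}$ and $\Im\, m_{2c} = O(1)$, this is an implicit inequality for $\theta$. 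I split into cases: if the $q$ term dominates inside the square root, one gets $\theta \le C\varphi^\xi \sqrt{q}$; if the $\sqrt{\theta/(N\eta)}$ term dominates, then $\theta^{3/2} \le C\varphi^{2\xi}/\sqrt{N\eta}$, giving $\theta \le C\varphi^{4\xi/3}(N\eta)^{-1/3}$; the remaining cases involving $\Im\, m_{2c}/(N\eta)$ and $(N\eta)^{-1}$ are absorbed into these. Combining the cases yields
\begin{equation*}
\mathbf 1(\Xi)\,\theta(z) \;\le\; C\varphi^{2\xi}\bigl(\sqrt{q}+(N\eta)^{-1/3}\bigr).
\end{equation*}
Feeding this back into $\Psi_\theta$ and into (\ref{Zestimate1}) of Lemma \ref{Z_lemma}, I obtain $\mathbf 1(\Xi)\Lambda_o \le C\varphi^{2\xi}(\sqrt{q}+(N\eta)^{-1/3})$. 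Finally, inserting the $\theta$-bound into (\ref{self_Gii}) and (\ref{average_bound}) controls the diagonal entries by $C\varphi^{2\xi}(\sqrt{q}+(N\eta)^{-1/3})$ as well, so altogether $\mathbf 1(\Xi)\Lambda(z) \le C\varphi^{2\xi}(\sqrt{q}+(N\eta)^{-1/3})$.

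\emph{Step 3 (continuity bootstrap in $\eta$).} Fix $E$ with $\lambda_r - c_0 \le E \le C_0\lambda_r$ and consider the discrete set of spectral parameters $z_k = E + i\eta_k$ with $\eta_k$ decreasing by steps of $N^{-10}$ from $1$ to $\varphi^{C_1}N^{-1}$. At $z_0$, Step 1 gives $\Xi(z_0)$ with $\xi$-high probability. Assume inductively that the claimed bound holds at $z_{k}$; since the right-hand side $C\varphi^{2\xi}(\sqrt{q}+(N\eta_k)^{-1/3})$ is much smaller than $(\log N)^{-1}$ for any $\eta_k \ge \varphi^{C_1}N^{-1}$ with $C_1$ large enough, $\Xi(z_k)$ holds strictly. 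Using $\|\partial_z G\| \le \eta^{-2} \le N^2$ from Lemma \ref{Ward_id}, $\Lambda$ is Lipschitz in $z$ with constant $\le N^2$; hence the step $z_k \mapsto z_{k+1}$ of size $N^{-10}$ changes $\Lambda$ by at most $N^{-8}$, which is negligible. Therefore $\Xi(z_{k+1})$ also holds, and I may apply Step 2 at $z_{k+1}$. By induction the bound propagates to all $\eta_k$ in the lattice, then to arbitrary $z \in S(c_0,C_0,C_1)$ by the Lipschitz continuity of $\Lambda$.

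\emph{Main obstacle.} The delicate point is Step 2: the implicit inequality for $\theta$ is genuinely self-referential because $\Psi_\theta$ depends on $\theta$. One must carefully split into the regimes where either $q$, or $\theta/(N\eta)$, or $\Im\, m_{2c}/(N\eta)$, or $(N\eta)^{-2}$ dominates inside $\Psi_\theta$, and verify that in every case the resulting algebraic inequality closes and gives the exponent $1/3$ appearing in $(N\eta)^{-1/3}$. This accounts for the sub-optimality of the weak bound (with $\sqrt{q}$ rather than $q$ and $(N\eta)^{-1/3}$ rather than $(N\eta)^{-1}$), which suffices as input for the subsequent strengthening to (\ref{MPBOUNDS}) and (\ref{DIAGONAL}).
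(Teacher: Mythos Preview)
Your overall strategy is exactly the continuity/bootstrap argument the paper invokes, and the use of Lemmas \ref{Z_lemma}--\ref{stability} together with the large-$\eta$ estimates (\ref{average_L})--(\ref{diag_L}) is precisely what the paper calls the ``key inputs.'' Steps 1 and 3 are fine.

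There is, however, a concrete gap in Step 2. After applying Lemma \ref{stability} you immediately pass to the crude estimate $\theta \le C\sqrt{\delta}$, discarding the factor $\sqrt{\kappa+\eta}$ in the denominator. With that simplification, the case where $\sqrt{\Im\, m_{2c}/(N\eta)}$ dominates in $\Psi_\theta$ is \emph{not} absorbed into the others: it yields only $\theta \le C\varphi^{\xi}(N\eta)^{-1/4}$, which for $N\eta\ge 1$ is strictly larger than the claimed $(N\eta)^{-1/3}$. So the statement ``the remaining cases involving $\Im\, m_{2c}/(N\eta)$ \ldots are absorbed into these'' is false as written.

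The fix is short: keep the full bound $\theta \le C\delta/\sqrt{\kappa+\eta+\delta}$ and use the square-root behavior $\Im\, m_{2c}(z) \le C\sqrt{\kappa+\eta}$ from Lemma \ref{lem_mbehavior}. In the offending regime one then has $\delta \le C\varphi^{2\xi}(\kappa+\eta)^{1/4}(N\eta)^{-1/2}$; splitting according to whether $\delta \lessgtr \kappa+\eta$ and eliminating $\kappa+\eta$ between the two inequalities gives $\theta \le C\varphi^{4\xi/3}(N\eta)^{-1/3}$ in both subcases. With this correction your case analysis closes and the proof goes through as planned.
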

\begin{proof}
One can prove this lemma using a continuity argument as in \cite[Section 4.1]{BEKYY} or \cite[Section 3.6]{EKYY1}. The key inputs are Lemmas \ref{Z_lemma}-\ref{stability}, diagonal estimate (\ref{average_bound}) and the estimates (\ref{average_L})-(\ref{diag_L}) for the $\eta \ge 1$ case. All the other parts of the proof are essentially the same. 
\end{proof}

To get the strong local laws given in Lemma \ref{lem_small}, we need stronger bounds on $[Z]_1$ and $[Z]_2$ in (\ref{selfcons_improved}). They follow from the abstract decoupling lemma (or the ``fluctuation averaging lemma") below. 

\begin{lem} \label{abstractdecoupling}
Fix a constat $\xi>0$. Suppose $q\le \varphi^{-5\xi}$ and that there exists $\tilde S\subseteq S(c_0,C_0,L)$ with $L\ge 18\xi$ such that we have with $\xi$-high probability
\begin{equation} 
\Lambda(z) \le \gamma(z) \text{ for } z\in \tilde S,
\end{equation}
where $\gamma$ is a deterministic function satisfying $\gamma(z)\le \varphi^{-\xi}$. Then we have with $(\xi-\tau_N)$-high probability,
\begin{equation}
\left|[Z]_1(z)\right|+ \left|[Z]_2(z)\right| \le \varphi^{18\xi} \left(q^2 + \frac{1}{(N\eta)^2} + \frac{\Im \, m_{2c}(z) + \gamma(z)}{N\eta} \right),
\end{equation}
for $z\in \tilde D$, where $\tau_N:=2/\log \log N$. 
\end{lem}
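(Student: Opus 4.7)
The plan is to establish the bound by the moment method. For $p \in 2\mathbb{N}$ chosen appropriately (of size roughly $\varphi^{\xi}/\log\log N$) I will prove
\begin{equation*}
\mathbb{E}\,|[Z]_j(z)|^{p} \le (Cp)^{Cp}\,\left(q^{2} + \frac{1}{(N\eta)^{2}} + \frac{\Im\, m_{2c}(z) + \gamma(z)}{N\eta}\right)^{p/2},\qquad j = 1,2,
\end{equation*}
and then invoke Markov's inequality. With this choice of $p$, the combinatorial factor $(Cp)^{Cp}$ absorbs into $\varphi^{18\xi}$, while the probability exponent loses the factor $\tau_{N}=2/\log\log N$, yielding the $(\xi-\tau_{N})$-high-probability statement.

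The heart of the argument is the computation of $\mathbb{E}\,[Z]_j^{p/2}\,\overline{[Z]_j}^{p/2}$. I would expand it as a sum over multi-indices $\mathbf{a}=(a_{1},\ldots,a_{p})$ and group the terms according to the partition $P(\mathbf{a})$ of $\{1,\ldots,p\}$ induced by coincidences of the $a_{k}$. The decisive structural input is the orthogonality $\mathbb{E}_{a_{k}}[Z_{a_{k}}] = 0$, which comes directly from the definition $Z_{a} = (1-\mathbb{E}_{a})(G_{aa})^{-1}$. Thus any index $a_{k}$ occupying a singleton block of $P(\mathbf{a})$ would kill the expectation, were it not for the dependence of the other factors $Z_{a_{\ell}}$, $\ell\ne k$, on row and column $a_{k}$ of $H$. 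I would remove this dependence iteratively using the resolvent identity (\ref{resolvent8}),
\begin{equation*}
\frac{1}{G_{bb}} \;=\; \frac{1}{G_{bb}^{(a)}} \;-\; \frac{G_{ba}\,G_{ab}}{G_{bb}\,G_{bb}^{(a)}\,G_{aa}}\,,
\end{equation*}
which swaps $G_{a_{\ell}a_{\ell}}$ for the minor $G_{a_{\ell}a_{\ell}}^{(a_{k})}$ at the price of an error proportional to $G_{a_{\ell}a_{k}}\,G_{a_{k}a_{\ell}}$ that is gained at each decoupling step.

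For the remaining pointwise bounds I would feed the hypothesis $\Lambda\le\gamma$ (together with $q\le\varphi^{-5\xi}$ and $\gamma\le\varphi^{-\xi}$) into Lemma \ref{Z_lemma} with $\theta$ replaced by $\gamma$, obtaining $|Z_{a}|\le\varphi^{C\xi}(q+\Psi_{\gamma})$ and $\Lambda_{o}\le\varphi^{C\xi}(q+\Psi_{\gamma})$, where $\Psi_{\gamma}=\sqrt{(\Im\,m_{2c}+\gamma)/(N\eta)}+1/(N\eta)$. Each singleton block in $P(\mathbf{a})$ then contributes a factor bounded by $\varphi^{C\xi}(q^{2}+\Psi_{\gamma}^{2})/N$: the $N^{-1}$ improvement per free sum is produced by Ward's identity (\ref{eq_gsq1})--(\ref{eq_gsq3}) applied to the squared off-diagonal entries arising from decoupling, while the $q^{2}$ accounts for the diagonal $a_{\ell}=a_{k}$ contributions. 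Paired blocks in $P(\mathbf{a})$ produce at most a harmless factor of the same size. The weight $\sigma_{i}/(1+m_{2}\sigma_{i})^{2}$ appearing in $[Z]_{1}$ is uniformly $O(1)$ by Lemma \ref{lem_assm3} and hence plays no role. Summing over partitions of cardinality at most $p$ and over the free indices produces the claimed combinatorial prefactor $(Cp)^{Cp}$.

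The main obstacle is the graphical and combinatorial organization of the iterated decoupling: one must apply (\ref{resolvent8}) to every $Z_{a_{\ell}}$ in such a way that, for each singleton index $a_{k}$, every surviving $G^{(\mathbb{T})}$ in the remaining factors becomes $a_{k}$-independent before taking $\mathbb{E}_{a_{k}}$, while simultaneously controlling the proliferation of error terms and the number of off-diagonal factors produced at each iteration. This is exactly the fluctuation-averaging machinery developed in \cite{EKYY1} and \cite{BEKYY} and adapted to sample covariance matrices in \cite{KY2}; the only new features here are the block structure of $G$ displayed in (\ref{green2}), which forces us to treat $Z_{i}$ $(i\in\mathcal I_{1})$ and $Z_{\mu}$ $(\mu\in\mathcal I_{2})$ through the unified formula $Z_{a}=(1-\mathbb{E}_{a})(G_{aa})^{-1}$, and the bounded-support large-deviation inputs from Lemma \ref{largederivation}. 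Up to these cosmetic changes the argument proceeds verbatim, and I would refer to those sources for the detailed graphical analysis.
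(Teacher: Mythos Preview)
Your outline is essentially the standard fluctuation-averaging strategy and matches what the paper invokes, but there is one genuine gap in your treatment of $[Z]_1$.

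You write that the weight $\sigma_i/(1+m_2\sigma_i)^2$ is uniformly $O(1)$ by Lemma \ref{lem_assm3} ``and hence plays no role.'' Boundedness is not the issue. The coefficient is \emph{random}: it depends on $m_2$, which depends on the $i$-th row and column of $X$. Consequently the summand $\frac{\sigma_i}{(1+m_2\sigma_i)^2}Z_i$ is \emph{not} annihilated by $\mathbb{E}_i$, and the ``decisive structural input'' $\mathbb{E}_{a_k}[Z_{a_k}]=0$ that you rely on cannot be applied directly to $[Z]_1$. Your decoupling scheme, as written, only removes the $a_k$-dependence from the other $Z_{a_\ell}$ factors via (\ref{resolvent8}); it never addresses the $a_k$-dependence sitting inside the coefficient of $Z_{a_k}$ itself.

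The paper's proof handles exactly this point: before invoking the abstract lemma from \cite{EKYY1}, it replaces $m_2$ by the minor $m_2^{(i)}$ in the coefficient, using
\[
m_2 = m_2^{(i)} + \frac{1}{N}\sum_{\mu}\frac{G_{\mu i}G_{i\mu}}{G_{ii}} = m_2^{(i)} + O(\Lambda_o^2),
\]
and then rewrites the summand as $(1-\mathbb{E}_i)\bigl[\tfrac{\sigma_i}{(1+m_2^{(i)}\sigma_i)^2}G_{ii}^{-1}\bigr]$, which now genuinely has vanishing $\mathbb{E}_i$-expectation because the bracketed quantity depends on row/column $i$ only through $G_{ii}^{-1}$. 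Only after this preprocessing does the moment computation you describe go through. You should insert this step (and check that the $O(\Lambda_o^2)$ error is already of the desired size) before running the partition-by-coincidence argument.
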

\begin{proof}
The bound for $[Z]_2$ is proved in Lemma 4.1 of \cite{EKYY1}. The bound for $[Z]_1$ can be proved in a similar way, except that the coefficients ${\sigma_i}/{(1+m_2\sigma_i)^2}$ are random and depend on $i$. This can be dealt with by writing, for any $i\in \mathcal I_1$,
$$m_2 = m_2^{(i)} + \frac{1}{N}\sum_{\mu\in\mathcal I_2} \frac{G_{\mu i} G_{i\mu}}{G_{ii}} = m_2^{(i)} + O(\Lambda_o^2).$$
Then on the event $\Xi$, we have with $\xi$-high probability,
\begin{align}
[Z]_1 & =\frac{1}{N}\sum_{i\in \mathcal I_1} \frac{\sigma_i}{\left(1+m_2^{(i)}\sigma_i \right)^2} Z_i + O(\Lambda_o^2) = \frac{1}{N}\sum_{i\in \mathcal I_1} (1-\mathbb E_i)\left[\frac{\sigma_i}{\left(1+m_2^{(i)}\sigma_i\right)^2}G_{ii}^{-1}\right]+ O(\Lambda_o^2) \nonumber\\
& = \frac{1}{N}\sum_{i\in \mathcal I_1} (1-\mathbb E_i)\left[\frac{\sigma_i}{\left(1+m_2 \sigma_i\right)^2}G_{ii}^{-1}\right]+ O\left(\varphi^{4\xi}\left(q^2 + \frac{1}{(N\eta)^2} + \frac{\Im \, m_{2c}(z) + \gamma(z)}{N\eta}\right)\right), \label{Z1_aver}
\end{align}
where in the last step we used (\ref{Zestimate1}). Then the proof for the first term in (\ref{Z1_aver}) is a slight modification of the one in \cite{EKYY1} or the simplified proof given in \cite[Appendix B]{EKYYL}. Finally, we can use that the event $\Xi$ holds with $\xi$-high probability by Lemma \ref{alem_weak}. For a demonstration of the above process, one can also refer to the proof of Lemma 4.9 of \cite{XYY}.
\end{proof}


\begin{proof}[Proof of the local deformed MP laws (\ref{MPBOUNDS}) and (\ref{DIAGONAL})]
Fix $c_0,C_0>0$, $\xi> 3$ and set 
$$L:=120\xi, \ \ \tilde \xi:= 2/\log 2 + \xi.$$
Hence we have $\tilde \xi \le 2\xi$ and $L\ge 60\tilde \xi$. Then to prove (\ref{DIAGONAL}), it suffices to prove 
\begin{equation}\label{goal_law1}
\bigcap_{z \in S(c_0,C_0,L)} \left\{ \Lambda(z) \leq C\varphi^{20\tilde \xi}\left(q+ \sqrt{\frac{\operatorname{Im} m_{2c}(z) }{N \eta}}+ \frac{1}{N\eta}\right) \right\},
\end{equation}
with $\xi$-high probability. 

By Lemmas \ref{alem_weak}, we have that $\Xi$ holds with $\tilde \xi$-high probaility. Then together with Lemma \ref{abstractdecoupling} and (\ref{selfcons_improved}), we get that with $(\tilde \xi - \tau_N)$-high probability, 
\begin{align*}
\left| z - f(m_2) \right| & \le \varphi^{18\tilde \xi} \left[q^2 + \frac{1}{(N\eta)^2} + \frac{\Im \, m_{2c}+ C\varphi^{2\tilde \xi}(\sqrt{q}+ (N\eta)^{-1/3})}{N\eta} \right] \\
& \le C\left[\varphi^{20\tilde \xi } \left(q^2 + \frac{1}{(N\eta)^{4/3}}\right) + \varphi^{18\tilde \xi }  \frac{\Im \, m_{2c}}{N\eta} \right] ,
\end{align*}
where we used Young's inequality for the $\sqrt{q}/(N\eta)$ term. Now applying Lemma \ref{stability}, we get that with $(\tilde \xi - \tau_N)$-high probability, 
\begin{equation}
\theta = |m_2 - m_{2c}| \le C\varphi^{10\tilde \xi} \left(q + \frac{1}{(N\eta)^{2/3}}\right) +C\varphi^{18\tilde \xi} \frac{\Im \, m_c}{N\eta\sqrt{\kappa+ \eta}} \le C\varphi^{18\tilde \xi} \left(q + \frac{1}{(N\eta)^{2/3}}\right),
\end{equation}
where we used (\ref{SQUAREROOTBEHAVIOR}) in the second step. 
Then using Lemma \ref{Z_lemma}, (\ref{self_Gii}) and (\ref{average_bound}), it is easy to obtain that
\begin{align*}
\Lambda \le C\varphi^{2\tilde \xi}(q+\Psi_\theta) + \theta & \le C\varphi^{18\tilde \xi} \left(q + \frac{1}{(N\eta)^{2/3}}\right) + C\varphi^{2\tilde \xi} \sqrt{\frac{\Im \, m_{2c}}{N\eta}}  \le \varphi^{20\tilde \xi} \left(q + \frac{1}{(N\eta)^{2/3}}\right)+ \varphi^{3\tilde \xi}\sqrt{\frac{\Im \, m_{2c}}{N\eta}},
\end{align*}
uniformly in $z\in S(c_0,C_0,L)$ with $(\tilde \xi-\tau_N)$-high probability, which is a better bound than the one in (\ref{localweakm}). We can repeat this process $M$ times, each iteration yields a stronger bound on $\Lambda$ which holds with a  smaller probability. More specifically, suppose that after $k$ iterations we get the bound
\begin{align}
\Lambda \le \varphi^{20\tilde \xi} \left(q + \frac{1}{(N\eta)^{1-\tau}}\right)+ \varphi^{3\tilde \xi}\sqrt{\frac{\Im \, m_{2c}}{N\eta}} ,\label{iteration1}
\end{align}
uniformly in $z\in S(c_0,C_0,L)$ with $\tilde \xi'$-high probability. Then by Lemma \ref{abstractdecoupling} and (\ref{selfcons_improved}), we have with $(\tilde \xi' - \tau_N)$-high probability, 
\begin{align*}
\left| z -f(m_2) \right| & \le \varphi^{18\tilde \xi} \left[q^2 + \frac{1}{(N\eta)^2} + \frac{\Im \, m_{2c}}{N\eta} + \frac{\varphi^{20\tilde \xi}}{N\eta}\left(q + \frac{1}{(N\eta)^{1-\tau}}\right)+ \frac{\varphi^{3\tilde \xi}}{N\eta}\sqrt{\frac{\Im \, m_{2c}}{N\eta}} \right] \\
& \le C\left[\varphi^{38\tilde \xi } \left(q^2 + \frac{1}{(N\eta)^{2-\tau}}\right) + \varphi^{18\tilde \xi }  \frac{\Im \, m_{2c}}{N\eta} \right] .
\end{align*}
Then using Lemma \ref{stability}, we get that with $(\tilde \xi' - \tau_N)$-high probability, 
\begin{equation}
\theta \le C\varphi^{19\tilde \xi} \left(q + \frac{1}{(N\eta)^{1-\tau/2}}\right) +C\varphi^{18\tilde \xi} \frac{\Im \, m_c}{N\eta\sqrt{\kappa+ \eta}} \le C\varphi^{19\tilde \xi} \left(q + \frac{1}{(N\eta)^{1-\tau/2}}\right).
\end{equation}
Again with Lemma \ref{Z_lemma}, (\ref{self_Gii}) and (\ref{average_bound}), we obtain that
\begin{align}
\Lambda &\le C\varphi^{2\tilde \xi}(q+\Psi_\theta) + \theta  \le C\varphi^{19\tilde \xi} \left(q + \frac{1}{(N\eta)^{1-\tau/2}}\right) + C\varphi^{2\tilde \xi} \sqrt{\frac{\Im \, m_c}{N\eta}}  \nonumber\\
& \le \varphi^{20\tilde \xi} \left(q + \frac{1}{(N\eta)^{1-\tau/2}}\right)+ \varphi^{3\tilde \xi}\sqrt{\frac{\Im \, m_c}{N\eta}}, \label{iteration2}
\end{align}
uniformly in $z\in S(c_0,C_0,L)$ with $(\tilde \xi' -\tau_N)$-high probability. Comparing with (\ref{iteration1}), we see that the power of $(N\eta)^{-1}$ is increased from $1-\tau$ to $1-\tau/2$, and moreover, there is no extra constant $C$ appearing on the right-hand side of (\ref{iteration2}). Thus after $M$ iterations, we get
\begin{align}
\Lambda &\le \varphi^{20\tilde \xi} \left(q + \frac{1}{(N\eta)^{1-(1/2)^{M-1}/3}}\right)+ \varphi^{3\tilde \xi}\sqrt{\frac{\Im \, m_c}{N\eta}}, 
\end{align}
uniformly in $z\in S(c_0,C_0,L)$ with $(\tilde \xi' -M\tau_N)$-high probability. Taking $M=\left\lfloor \log\log N / \log 2\right\rfloor$ such that
$$\tilde \xi - M\tau_N \ge \xi, \ \ \frac{1}{(N\eta)^{-(1/2)^{M-1}/3}} \le (N\eta)^{4/(3\log N)} \le C,$$
we can conclude (\ref{goal_law1}) and hence (\ref{DIAGONAL}). Finally to prove (\ref{MPBOUNDS}), we only need to plug (\ref{goal_law1}) into Lemma \ref{abstractdecoupling} and then apply Lemma \ref{stability}.
\end{proof}

Finally, we describe briefly the proof of (\ref{boundH})-(\ref{SEC}).

\begin{proof}[Proof of (\ref{boundH})]
The norm bound on $H$ follows from a standard application of the moment method, for example, see \cite[Lemma 4.3]{EKYY1} or \cite{Handbook_DS}.
\end{proof}

\begin{proof}[Proof of (\ref{delocal})] Choose $z_0=E+i\eta_0\in S(2c_1, C_0, C_1)$ with $\eta_0 = \varphi^{C_1} N^{-1}$. By (\ref{DIAGONAL}), we have
\begin{equation*}
\vert G_{\mu \mu}(z_0)\vert= O(1) \text{ with } \xi_1\text{-high probability.}
\end{equation*}
Then using the spectral decomposition (\ref{spectral1}), we get
\begin{equation}\label{spectraldecomp}
\sum_{k=1}^N \frac{\eta_0 \vert \zeta_k(\mu) \vert^2}{(\lambda_k-E)^2+\eta_0^2}  = \operatorname{Im} {G}_{\mu \mu}(z_0) =  O(1).
\end{equation}
By (\ref{boundH}), $\lambda_k + i\eta_0 \in S(2c_1, C_0, C_1)$ with $\xi_1$-high probability if we choose $C_0$ large enough. Then choosing $E=\lambda_k$ in (\ref{spectraldecomp}) yields that, with $\xi_1$-high probability,
\begin{equation*}
\vert \zeta_k(\mu) \vert^2 \le \eta_0 = \frac{\varphi^{C_1}}{N},
\end{equation*} 
for all $k$. The proof for $|\xi_k(i)|^2$ is similar.
\end{proof}

\begin{proof} [Proof of (\ref{SEC})]
With (\ref{MPBOUNDS}) and (\ref{DIAGONAL}), (\ref{SEC}) follows from a routine application of the Helffer-Sj{\"o}strand functional calculus; we refer the reader to \cite{EYY}.
\end{proof}

\end{appendix}

\vspace{5pt}

\noindent{\bf Acknowledgements.}  The authors would like to thank Jeremy Quastel and Jun Yin for fruitful discussions and valuable suggestions, which have significantly improved the paper. The first author also want to thank Jun Yin for the hospitality when he visited Madison.


\begin{thebibliography}{10}

\bibitem{AGZ}
G.~W. Anderson, A.~Guionnet, and O.~Zeitouni.
\newblock {\em An Introduction to Random Matrices}.
\newblock Cambridge University Press, 2010.

\bibitem{AT}
T.~Anderson.
\newblock {\em An introduction to multivariate analysis}.
\newblock Wiley, 3 edition, 2003.

\bibitem{BSY}
Z.~Bai, J.~W. Silverstein, and Y.~Yin.
\newblock A note on the largest eigenvalue of a large dimensional sample
  covariance matrix.
\newblock {\em J. Multivar. Anal.}, 26:166 -- 168, 1988.

\bibitem{Bai1998}
Z.~D. Bai and J.~W. Silverstein.
\newblock No eigenvalues outside the support of the limiting spectral
  distribution of large-dimensional sample covariance matrices.
\newblock {\em Ann. Probab.}, 26:316--345, 1998.

\bibitem{Bai2006}
Z.~D. Bai and J.~W. Silverstein.
\newblock {\em Spectral Analysis of Large Dimensional Random Matrices},
  volume~2 of {\em Mathematics Monograph Series}.
\newblock Science Press, Beijing, 2006.

\bibitem{BPZ1}
Z.~Bao, G.~Pan, and W.~Zhou.
\newblock Universality for the largest eigenvalue of sample covariance matrices
  with general population.
\newblock {\em Ann. Statist.}, 43:382--421, 2015.

\bibitem{BPZ}
Z.~G. Bao, G.~M. Pan, and W.~Zhou.
\newblock Local density of the spectrum on the edge for sample covariance
  matrices with general population.
\newblock {\em Preprint}, 2013.

\bibitem{BEKYY}
A.~Bloemendal, L.~Erd{\H o}s, A.~Knowles, H.-T. Yau, and J.~Yin.
\newblock Isotropic local laws for sample covariance and generalized {W}igner
  matrices.
\newblock {\em Electron. J. Probab.}, 19(33):1--53, 2014.

\bibitem{Handbook_DS}
K.~R. Davidson and S.~J. Szarek.
\newblock Local operator theory, random matrices and banach spaces.
\newblock volume~1 of {\em Handbook of the Geometry of Banach Spaces}, pages
  317 -- 366. North-Holland, Amsterdam, 2001.

\bibitem{DXC}
X.~Ding.
\newblock Singular vector distribution of covariance matrices.
\newblock {\em arXiv: 1611.01837}.

\bibitem{Karoui}
N.~El~Karoui.
\newblock {T}racy-{W}idom limit for the largest eigenvalue of a large class of
  complex sample covariance matrices.
\newblock {\em Ann. Probab.}, 35(2):663--714, 2007.

\bibitem{EKYY}
L.~Erd{\H{o}}s, A.~Knowles, H.-T. Yau, and J.~Yin.
\newblock Spectral statistics of {E}rd{\H{o}}s-{R}{\'e}nyi graphs {II}:
  Eigenvalue spacing and the extreme eigenvalues.
\newblock {\em Comm. Math. Phys.}, 314:587--640, 2012.

\bibitem{EKYYL}
L.~Erd{\H{o}}s, A.~Knowles, H.-T. Yau, and J.~Yin.
\newblock The local semicircle law for a general class of random matrices.
\newblock {\em Electron. J. Probab.}, 18:58 pp., 2013.

\bibitem{EKYY1}
L.~Erd{\H o}s, A.~Knowles, H.-T. Yau, and J.~Yin.
\newblock Spectral statistics of {E}rd{\H o}s-{R}{\' e}nyi graphs {I}: Local
  semicircle law.
\newblock {\em Ann. Probab.}, 41:2279--2375, 2013.

\bibitem{EYY}
L.~Erd{\H{o}}s, H.-T. Yau, and J.~Yin.
\newblock Rigidity of eigenvalues of generalized {W}igner matrices.
\newblock {\em Advances in Mathematics}, 229:1435 -- 1515, 2012.

\bibitem{Forr}
P.~Forrester.
\newblock The spectrum edge of random matrix ensembles.
\newblock {\em Nucl. Phys. B}, 402(3):709 -- 728, 1993.

\bibitem{HHN}
W.~Hachem, A.~Hardy, and J.~Najim.
\newblock Large complex correlated Wishart matrices: Fluctuations and
  asymptotic independence at the edges.
\newblock {\em arXiv:1409.7548}.

\bibitem{IJ}
I.~M. Johnstone.
\newblock High dimensional statistical inference and random matrices.
\newblock {\em arXiv:0611589}.

\bibitem{IJ2}
I.~M. Johnstone.
\newblock On the distribution of the largest eigenvalue in principal components
  analysis.
\newblock {\em Ann. Statist.}, 29:295--327, 2001.

\bibitem{JI}
I.~Jolliffe.
\newblock {\em Principal Component Analysis}.
\newblock Springer, 2 edition, 2002.

\bibitem{KY2}
A.~Knowles and J.~Yin.
\newblock Anisotropic local laws for random matrices.
\newblock {\em arXiv:1410.3516}.

\bibitem{KY}
A.~Knowles and J.~Yin.
\newblock The isotropic semicircle law and deformation of {W}igner matrices.
\newblock {\em Comm. Pure Appl. Math.}, 66:1663--1749, 2013.

\bibitem{LS}
J.~O. Lee and K.~Schnelli.
\newblock Tracy-{W}idom distribution for the largest eigenvalue of real sample
  covariance matrices with general population.
\newblock {\em arXiv:1409.4979}.

\bibitem{LY}
J.~O. Lee and J.~Yin.
\newblock A necessary and sufficient condition for edge universality of
  {W}igner matrices.
\newblock {\em Duke Math. J.}, 163:117--173, 2014.

\bibitem{MP}
V.~A. Mar{\v c}enko and L.~A. Pastur.
\newblock Distribution of eigenvalues for some sets of random matrices.
\newblock {\em Mathematics of the USSR-Sbornik}, 1:457, 1967.

\bibitem{NJ}
B.~Nadler and I.~Johnstone.
\newblock On the distribution of {R}oy's largest root test in {MANOVA} and in
  signal detection in noise.
\newblock {\em Technical Report No. 2011-04}, 2011.

\bibitem{Onatski}
A.~Onatski.
\newblock The {T}racy-{W}idom limit for the largest eigenvalues of singular
  complex {W}ishart matrices.
\newblock {\em Ann. Appl. Probab.}, 18:470--490, 04 2008.

\bibitem{PA}
D.~Paul and A.~Aue.
\newblock Random matrix theory in statistics: A review.
\newblock {\em J. Stat. Plan. Inference}, 150:1 -- 29, 2014.

\bibitem{PY2}
N.~S. Pillai and J.~Yin.
\newblock Edge universality of correlation matrices.
\newblock {\em Ann. Statist.}, 40:1737--1763, 2012.

\bibitem{PY}
N.~S. Pillai and J.~Yin.
\newblock Universality of covariance matrices.
\newblock {\em Ann. Appl. Probab.}, 24:935--1001, 2014.

\bibitem{Silverstein1995}
J.~Silverstein and Z.~Bai.
\newblock On the empirical distribution of eigenvalues of a class of large
  dimensional random matrices.
\newblock {\em J. Multivar. Anal.}, 54(2):175 -- 192, 1995.

\bibitem{SW}
J.~W. Silverstein.
\newblock On the weak limit of the largest eigenvalue of a large dimensional
  sample covariance matrix.
\newblock {\em J. Multivar. Anal.}, 30:307 -- 311, 1989.

\bibitem{JS}
J.~W. Silverstein.
\newblock The {S}tieltjes transform and its role in eigenvalue behavior of
  large dimensional random matrices.
\newblock Random Matrix Theory and its Applications, Lecture Notes Series.
  World Scientific, Singapore, 2009.

\bibitem{SC}
J.~W. Silverstein and S.~I. Choi.
\newblock Analysis of the limiting spectral distribution of large dimensional
  random matrices.
\newblock {\em J. Multivar. Anal.}, 54(2):295 -- 309, 1995.

\bibitem{TW1}
C.~A. Tracy and H.~Widom.
\newblock Level-spacing distributions and the airy kernel.
\newblock {\em Comm. Math. Phys.}, 159:151--174, 1994.

\bibitem{TW}
C.~A. Tracy and H.~Widom.
\newblock On orthogonal and symplectic matrix ensembles.
\newblock {\em Comm. Math. Phys.}, 177:727--754, 1996.

\bibitem{VDN}
D.~V. Voiculescu, K.~J. Dykema, and A.~Nica.
\newblock {\em Free Random Variables: A Noncommutative Probability Approach to
  Free Products with Applications to Random Matrices, Operator Algebras, and
  Harmonic Analysis on Free Groups}.
\newblock American Mathematical Society, 1992.

\bibitem{XYY}
H.~Xi, F.~Yang, and J.~Yin.
\newblock Local circular law for the product of a deterministic matrix with a
  random matrix.
\newblock {\em arXiv:1603.04066}.

\bibitem{YZB}
J.~Yao, Z.~Bai, and S.~Zheng.
\newblock {\em Large Sample Covariance Matrices and High-Dimensional Data
  Analysis}.
\newblock Cambridge University Press, 2015.

\bibitem{YBK}
Y.~Q. Yin, Z.~D. Bai, and P.~R. Krishnaiah.
\newblock On the limit of the largest eigenvalue of the large dimensional
  sample covariance matrix.
\newblock {\em Probab. Theory Related Fields}, 78:509--521, 1988.

\end{thebibliography}

\end{document}